\documentclass[final,a4paper]{amsart}
\usepackage{amsmath,amssymb,amsfonts}
\usepackage[foot]{amsaddr}
\usepackage{color}
\usepackage{verbatim}
\usepackage{eepic,epic}
\usepackage{epsfig,subfigure,epstopdf}
\usepackage[]{graphics}
\usepackage{graphicx,inputenc}
\usepackage{subfigure}
\usepackage[notref,notcite]{showkeys}

\usepackage[colorlinks,linktocpage,linkcolor=blue]{hyperref}
\usepackage{fancyhdr}


\hsize=6.4in

\newcommand{\red}[1]{\textcolor{red}{#1}}

\newtheorem{theorem}{Theorem}[section]
\newtheorem{lemma}{Lemma}[section]
\newtheorem{definition}[theorem]{Definition}

\newtheorem{assumption}[theorem]{Assumption}
\theoremstyle{remark}
\newtheorem{remark}{Remark}[section]
\newtheorem{corollary}{Corollary}[section]
\numberwithin{equation}{section}

\def\D{^C\!D_t^{\alpha}}
\def\N+{n\in\mathbb{N}^{+}}
\def\NN{n\in\mathbb{N}}
\def\n{\partial {\overrightarrow{\bf n}}}
\def\DK{\mathcal{D}(K)}
\def\DKd{\mathcal{D}(K_\delta)}
\def\I{I_t^\alpha}
\def\L{\mathcal{L}}
\begin{document}

\title
{An undetermined time-dependent coefficient in a fractional diffusion equation}
\author {Zhidong Zhang}
\address{Department of Mathematics, Texas A\&M University, College Station, TX, 77843, USA 
}
\email{zhidong@math.tamu.edu}
\begin{abstract}
In this work, we consider a FDE (fractional diffusion equation) 
$$\D u(x,t)-a(t)\L u(x,t)=F(x,t)$$ with a time-dependent diffusion 
coefficient $a(t)$. This is an extension of \cite{Zhang2016undetermined}, 
which deals with this FDE in one-dimensional space.  
For the direct problem, given an $a(t),$ 
we establish the existence, uniqueness and some regularity properties 
with a more general domain $\Omega$ and right-hand side $F(x,t)$. 
For the inverse problem--recovering $a(t),$ 
we introduce an operator $K$ one of whose fixed points is $a(t)$ 
and show its monotonicity, uniqueness and 
existence of its fixed points. With these properties, a reconstruction 
algorithm for $a(t)$ is created and some numerical results are provided 
to illustrate the theories. \\\\
\text{Keywords}: fractional diffusion, fractional inverse problem,  
uniqueness, existence, monotonicity, 
iteration algorithm. 
\\\\
\text{AMS subject classifications}: 35R11, 35R30, 65M32.
\end{abstract}

\maketitle

\section{Introduction}

\par This paper considers the fractional diffusion equation (FDE) with 
a continuous and positive coefficient function $a(t):$
\begin{equation}\label{fde}
  \begin{aligned}
   \D u(x,t)-a(t)\L u(x,t)&=F(x,t),\ &&x\in \Omega,\ t\in (0,T];\\
   u(x,t)&=0,\ &&(x,t)\in \partial\Omega \times (0,T];\\
   u(x,0)&=u_0(x),\ &&x\in \Omega,
   \end{aligned}
 \end{equation}
 where $\Omega$ is a bounded and smooth subset of $R^n, n=1,2,3,$ $-\L$ is a symmetric uniformly elliptic operator defined as 
 $$-\L u=-\sum_{i,j=1}^n (a^{ij}(x)u_{x_i})_{x_j}+c(x)u$$ with conditions 
 \begin{equation}\label{sobolev assumption}
 a^{ij},c\in C^2(\overline{\Omega}) \ (i,j=1,\dots,n),\ \partial \Omega\ \text{is}\  C^3,
 \end{equation} 
 and $\D$ is the left-sided Djrbashian-–Caputo $\alpha$-th order derivative with respect to time $t.$ The definition for $\D$ is $$\D u(x,t)=\frac{1}{\Gamma(n-\alpha)}
 \int_{0}^{t} (t-\tau)^{n-\alpha-1}
 \frac{d^n}{d\tau^n}u(x,\tau) {\rm d}\tau$$
 with Gamma function $\Gamma(\cdot)$ and the nearest integer $n$ with 
 $\alpha\le n.$ In this paper, we are assuming a subdiffusion process,  
 i.e. $\alpha\in(0,1).$ This simplifies the definition of $\D$ as 
 $$ \D u(x,t)=\frac{1}{\Gamma(1-\alpha)}
 \int_{0}^{t} (t-\tau)^{-\alpha}\frac{d}{d\tau}u(x,\tau) {\rm d}\tau.$$
 This work is an extension of \cite{Zhang2016undetermined} 
 from a simple space domain $\Omega$ to $\mathbb{R}^n$, considers the 
 more general analysis for the direct problem and contains an existence 
 argument for the inverse problem of recovering $a(t).$

\par This paper consists of two parts; the direct problem and 
the inverse problem. For the direct problem, we build the 
spectral representation of the weak solution $u(x,t;a).$ The notation $u(x,t;a)$ is used for displaying the dependence of the solution $u$ on the diffusivity $a(t).$ 
Then the existence, uniqueness and regularity results are proved 
with several assumptions on the coefficient function $a(t).$ 
Unlike \cite{Zhang2016undetermined}, 
the right hand side function $F(x,t)$ is not of the form $f(x)g(t)$, so 
that the proof of regularity is more delicate. For the inverse problem, we use the single point flux data 
$$a(t)\frac{\partial u}{\n}(x_0,t;a)=g(t),\ x_0\in \partial \Omega$$
to recover the coefficient $a(t)$ 
(We choose the data $a(t)\frac{\partial u}{\n}(x_0,t;a)=g(t)$ instead of 
the classical flux $\frac{\partial u}{\n}(x_0,t;a)$ because 
in practice, $a(t)\frac{\partial u}{\n}(x_0,t;a)$ is usually  
measured as the flux). 
For the reconstruction, we only consider to recover a 
continuous and positive $a(t)$ to match the assumptions set in the direct 
problem. Acting a flux data, we introduce an operator $K$ 
one of whose fixed points is the coefficient $a(t).$ Using 
the weak maximum principle \cite{luchko2009maximum}, we establish the monotonicity and uniqueness of the fixed points of operator $K$, and the proof of uniqueness leads to a numerical reconstruction algorithm. Since we consider a multidimensional domain $\Omega$ here, the Sobolev Embedding Theorem yields that we need to add the condition \eqref{sobolev assumption} on the operator $-\L$ to ensure the $C^1$-regularity of the series representation of $u$. Then the operator $K$ is well-defined, where the proofs can be seen in section 4. This is a significant difference from  \cite{Zhang2016undetermined}.  
Furthermore, an existence argument of the fixed points of $K$ is included by this paper, which \cite{Zhang2016undetermined} does not contain. 

\par The rest of this paper follows the following structure. In section 2, we collect 
some preliminary results about fractional calculus and the eigensystem 
of $-\L$. The direct problem 
is discussed in section 3, i.e. we establish the existence, uniqueness 
and some regularity results of the weak solution for FDE \eqref{fde}. 
Then section 4 deals with the inverse problem of recovering $a(t)$. Specifically, an operator 
$K$ is introduced at the beginning of this section, then its monotonicity 
and uniqueness of its fixed points give an algorithm to recover the 
coefficient $a(t)$. In particular, the existence argument of the fixed 
points of $K$ is included by this section. In section 5, some 
numerical results are presented to illustrate the theoretical basis.

\section{Preliminary material}

\subsection{Mittag-Leffler function}
\par In this part, we describe the Mittag-Leffler function which plays 
an important role in fractional diffusion equations. This  
is a two-parameter function defined as 
\begin{equation*}
  E_{\alpha,\beta}(z) = \sum_{k=0}^\infty \frac{z^k}{\Gamma(k\alpha+\beta)},
  \ z\in \mathbb{C}.
\end{equation*}
It generalizes the natural exponential function in the sense that 
$E_{1,1}(z)=e^z$. We list some important properties of the 
Mittag-Leffler function for future use.

\begin{lemma}\label{mittag_bound}
Let $0<\alpha<2$ and $\beta\in\mathbb{R}$ be arbitrary, and $\frac{\alpha\pi}{2}
<\mu<\min(\pi,\alpha\pi)$. Then there exists a constant $C=C(\alpha,\beta,\mu)>0$ such
that
\begin{equation*}
  |E_{\alpha,\beta}(z)|\leq \frac{C}{1+|z|},\quad \mu\leq|\mathrm{arg}(z)|\leq \pi.
\end{equation*}
\end{lemma}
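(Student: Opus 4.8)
For the bounded part of the estimate I would argue by continuity. The defining series converges locally uniformly on $\mathbb{C}$, so $E_{\alpha,\beta}$ is entire and in particular continuous; restricted to the compact set $\{\,\mu\le|\arg z|\le\pi,\ |z|\le R\,\}$ (for any fixed $R>0$) it attains a finite maximum $M$. Since $\tfrac{1}{1+|z|}\ge\tfrac{1}{1+R}$ there, the desired inequality holds on this piece with constant $M(1+R)$. The real content of the lemma is therefore the decay of $E_{\alpha,\beta}(z)$ as $|z|\to\infty$ inside the sector.

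For the far field I would use the Hankel-type contour representation
$$E_{\alpha,\beta}(z)=\frac{1}{2\pi i}\int_{\gamma}\frac{t^{\alpha-\beta}e^{t}}{t^{\alpha}-z}\,dt,$$
where $\gamma=\gamma(\delta,\theta)$ consists of the two rays $\arg t=\pm\theta$ with $|t|\ge\delta$ together with the arc $|t|=\delta$, $|\arg t|\le\theta$, traversed with increasing argument, and $\theta$ is chosen with $\tfrac{\pi}{2}<\theta<\mu/\alpha$. Such a $\theta$ exists: $\mu>\tfrac{\alpha\pi}{2}$ gives $\mu/\alpha>\tfrac{\pi}{2}$, while $\mu<\alpha\pi$ gives $\mu/\alpha<\pi$, so the interval $(\tfrac{\pi}{2},\mu/\alpha)$ is a nonempty subset of $(\tfrac{\pi}{2},\pi)$. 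The identity itself is obtained by expanding $(t^{\alpha}-z)^{-1}=\sum_{k\ge0}z^{k}t^{-\alpha(k+1)}$ and integrating term-by-term, each term reproducing $1/\Gamma(\alpha k+\beta)$ through the Hankel representation of the reciprocal Gamma function. Two features of the choice of $\theta$ drive the estimate: $\theta>\tfrac{\pi}{2}$ makes $\mathrm{Re}\,t=|t|\cos\theta<0$ on the rays, and $\alpha\theta<\mu$ keeps $\arg(t^{\alpha})=\pm\alpha\theta$ at angular distance at least $\mu-\alpha\theta>0$ from every $z$ with $\mu\le|\arg z|\le\pi$.

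To extract the decay I would split, on the contour,
$$\frac{1}{t^{\alpha}-z}=-\frac{1}{z}+\frac{t^{\alpha}}{z\,(t^{\alpha}-z)},$$
which gives $E_{\alpha,\beta}(z)=-\tfrac{1}{z}\cdot\tfrac{1}{2\pi i}\int_{\gamma}t^{\alpha-\beta}e^{t}\,dt+R(z)$ with $R(z)=\tfrac{1}{2\pi i\,z}\int_{\gamma}\tfrac{t^{2\alpha-\beta}e^{t}}{t^{\alpha}-z}\,dt$. The first integral is again a Hankel integral for the reciprocal Gamma function and evaluates to $1/\Gamma(\beta-\alpha)$, yielding the leading term $-1/(z\,\Gamma(\beta-\alpha))$. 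For the remainder, the angular separation noted above gives $|t^{\alpha}-z|\ge c\,|z|$ with $c=c(\alpha,\mu)>0$ uniform over the sector, while $\mathrm{Re}\,t<0$ on the rays makes $\int_{\gamma}|t|^{2\alpha-\beta}e^{\mathrm{Re}\,t}\,|dt|$ finite; together these give $|R(z)|\le C'|z|^{-2}$, and hence $|E_{\alpha,\beta}(z)|\le C''/|z|$ for all $|z|\ge R$ with $R$ large.

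Finally I would combine the two regimes. On $|z|\le R$ the inequality holds with constant $M(1+R)$, and on $|z|\ge R$ the bound $C''/|z|$ gives $|E_{\alpha,\beta}(z)|\le C''(1+R^{-1})/(1+|z|)$; taking $C$ to be the larger of the two constants finishes the proof. I expect the main obstacle to be the far-field analysis — justifying the contour representation and term-by-term expansion, and above all securing the uniform lower bound $|t^{\alpha}-z|\ge c\,|z|$ on $\gamma$ together with exponential decay on the rays. This is precisely the step where the hypothesis $\mu>\tfrac{\alpha\pi}{2}$, which creates room to insert a $\theta$ with $\tfrac{\pi}{2}<\theta<\mu/\alpha$, is indispensable.
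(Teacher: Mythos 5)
The paper does not actually prove this lemma; it simply cites \cite{kilbas2006theory}, and your argument is a correct reconstruction of the standard proof given there (and in Podlubny): the Hankel-type contour representation, the choice $\tfrac{\pi}{2}<\theta<\mu/\alpha$ made possible by $\tfrac{\alpha\pi}{2}<\mu<\alpha\pi$, the splitting of $(t^{\alpha}-z)^{-1}$ to extract the $-1/(z\Gamma(\beta-\alpha))$ leading term, and the uniform lower bound $|t^{\alpha}-z|\ge\sin(\mu-\alpha\theta)\,|z|$ coming from the angular separation. The only step you leave implicit is the one you yourself flag: the term-by-term expansion of $(t^{\alpha}-z)^{-1}$ converges on the contour only where $|t|^{\alpha}>|z|$, so one must either take the contour radius $\delta>|z|^{1/\alpha}$ (legitimate, since Cauchy's theorem and the exponential decay on the rays make the integral independent of $\delta$ for $z$ outside the contour's image) or argue by analytic continuation from small $|z|$; with that point made explicit the proof is complete and matches the cited source's approach.
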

\begin{proof}
 This proof can be found in \cite{kilbas2006theory}.
\end{proof}

\begin{lemma}\label{mittag_derivative}
 \par For $\lambda>0,\ \alpha>0$ and $\N+,$ we have 
 $$\frac{d^n}{dt^n}E_{\alpha,1}(-\lambda t^\alpha)
 =-\lambda t^{\alpha-n}E_{\alpha,\alpha-n+1}(-\lambda t^\alpha),
 \ t>0.$$
In particular, if we set $n=1,$ 
 then there holds 
 $$\frac{d}{dt}E_{\alpha,1}(-\lambda t^\alpha)
 =-\lambda t^{\alpha-1}E_{\alpha,\alpha}(-\lambda t^\alpha),
 \ t>0.$$
\end{lemma}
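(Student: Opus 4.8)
The plan is to work directly from the series definition of the Mittag-Leffler function and differentiate it $n$ times term by term. Writing
$$
E_{\alpha,1}(-\lambda t^\alpha)=\sum_{k=0}^\infty \frac{(-\lambda)^k}{\Gamma(k\alpha+1)}\,t^{\alpha k},
$$
I would first observe that the $k=0$ term is the constant $1$ and so is annihilated by any derivative, while for $k\ge 1$ the monomial power rule for real exponents,
$$
\frac{d^n}{dt^n}\,t^{p}=\frac{\Gamma(p+1)}{\Gamma(p-n+1)}\,t^{p-n},\qquad t>0,
$$
applied with $p=\alpha k$, produces the factor $\Gamma(\alpha k+1)/\Gamma(\alpha k-n+1)$. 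The crucial feature is that $\Gamma(\alpha k+1)$ then cancels exactly against the $\Gamma(k\alpha+1)$ in the denominator of the series coefficient.

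After this cancellation the differentiated series reads
$$
\frac{d^n}{dt^n}E_{\alpha,1}(-\lambda t^\alpha)=\sum_{k=1}^\infty \frac{(-\lambda)^k}{\Gamma(\alpha k-n+1)}\,t^{\alpha k-n},
$$
and the remaining task is purely bookkeeping: factor out $-\lambda\,t^{\alpha-n}$ and shift the summation index by setting $j=k-1$. Since $\alpha k-n=(\alpha-n)+\alpha j$ and $\alpha k-n+1=(\alpha-n+1)+\alpha j$, the sum collapses to
$$
-\lambda\,t^{\alpha-n}\sum_{j=0}^\infty \frac{(-\lambda t^\alpha)^{j}}{\Gamma(\alpha j+\alpha-n+1)}=-\lambda\,t^{\alpha-n}E_{\alpha,\alpha-n+1}(-\lambda t^\alpha),
$$
which is precisely the claimed identity; the special case $n=1$ follows by substitution. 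Note that any index $k$ for which $\alpha k-n+1$ is a non-positive integer contributes a vanishing term because $1/\Gamma$ has zeros there, so the re-indexing is consistent and no spurious terms arise.

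The only genuine obstacle, and the step I would spell out carefully, is the justification of the term-by-term differentiation, i.e. the interchange of $d^n/dt^n$ with the infinite sum. This rests on the fact that $1/\Gamma(\alpha k+1)$ decays super-exponentially in $k$, so that for every compact interval $[t_1,t_2]\subset(0,\infty)$ both the original series and each of its formally differentiated versions converge absolutely and uniformly on $[t_1,t_2]$. By the standard theorem on differentiation of uniformly convergent series of $C^\infty$ functions, iterating once per derivative legitimizes the computation above on $(0,\infty)$, which is where the statement is asserted. With this analytic point secured, the algebraic cancellation and index shift complete the proof.
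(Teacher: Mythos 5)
Your argument is correct. Note, however, that the paper does not actually prove this lemma: it simply cites \cite[Lemma 3.2]{sakamoto2011initial}, so there is no internal proof to compare against, and your termwise-differentiation computation is precisely the self-contained argument that the citation stands in for. The algebra is right: the constant $k=0$ term drops out, the power rule produces $\Gamma(\alpha k+1)/\Gamma(\alpha k-n+1)$ whose numerator cancels the series coefficient, the index shift $j=k-1$ yields $E_{\alpha,\alpha-n+1}$, and your remark that indices with $\alpha k-n+1$ a non-positive integer contribute nothing (since then $\alpha k$ is a non-negative integer below $n$, so $t^{\alpha k}$ is a polynomial killed by $d^n/dt^n$, matching the zero of $1/\Gamma$) correctly handles the only degenerate case. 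Your justification of the interchange of sum and derivative is also the right one, though the super-exponential decay you should invoke for the \emph{differentiated} series is that of $1/\Gamma(\alpha k-n+1)$ rather than $1/\Gamma(\alpha k+1)$; since the two differ by the polynomially growing factor $\Gamma(\alpha k+1)/\Gamma(\alpha k-n+1)\sim(\alpha k)^n$, this is a cosmetic point and does not affect the uniform convergence on compact subsets of $(0,\infty)$. In short: a complete, standard proof where the paper offers only a reference.
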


\begin{proof}
 \par This is \cite[Lemma $3.2$]{sakamoto2011initial}.
\end{proof}

\begin{lemma}\label{mittag_positive}
 If $0<\alpha<1$ and $z>0,$ then 
 $E_{\alpha,\alpha}(-z)\ge 0.$
\end{lemma}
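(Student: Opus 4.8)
The plan is to bypass the alternating series $E_{\alpha,\alpha}(-z)=\sum_{k\ge0}(-z)^k/\Gamma(k\alpha+\alpha)$, whose terms have no usable sign pattern, and instead extract the sign from an explicit integral representation obtained through the Laplace transform. First I would establish the transform identity
$$\int_0^\infty e^{-pt}\,t^{\alpha-1}E_{\alpha,\alpha}(-\lambda t^\alpha)\,dt=\frac{1}{p^\alpha+\lambda},\qquad \lambda>0,\ \Re p>0,\ |p|>\lambda^{1/\alpha},$$
by integrating the defining series term by term (using $\int_0^\infty e^{-pt}t^{k\alpha+\alpha-1}\,dt=\Gamma(k\alpha+\alpha)\,p^{-(k\alpha+\alpha)}$) and summing the geometric series $\sum_{k\ge0}(-\lambda p^{-\alpha})^k$, followed by analytic continuation in $p$ to the cut plane.

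Next I would invert this transform via the Bromwich integral and deform the vertical contour onto a Hankel contour that hugs the branch cut of $p^\alpha$ along the negative real axis. Writing $p=re^{\pm i\pi}$ on the two edges of the cut, the jump of $1/(p^\alpha+\lambda)$ is
$$\frac{1}{r^\alpha e^{-i\alpha\pi}+\lambda}-\frac{1}{r^\alpha e^{i\alpha\pi}+\lambda}=\frac{2i\,r^\alpha\sin(\alpha\pi)}{r^{2\alpha}+2\lambda r^\alpha\cos(\alpha\pi)+\lambda^2},$$
where the denominator equals $(r^\alpha+\lambda\cos\alpha\pi)^2+\lambda^2\sin^2\alpha\pi>0$. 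Collecting the contributions of the two edges yields
$$t^{\alpha-1}E_{\alpha,\alpha}(-\lambda t^\alpha)=\frac{\sin(\alpha\pi)}{\pi}\int_0^\infty e^{-rt}\,\frac{r^\alpha}{r^{2\alpha}+2\lambda r^\alpha\cos(\alpha\pi)+\lambda^2}\,dr.$$
Since $0<\alpha<1$ forces $\sin(\alpha\pi)>0$ and every remaining factor in the integrand is nonnegative, the right-hand side is $\ge0$; dividing by $t^{\alpha-1}>0$ and specializing $t=1,\ \lambda=z$ gives $E_{\alpha,\alpha}(-z)\ge0$ for every $z>0$, as claimed.

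The main obstacle is the rigorous justification of the contour deformation, namely showing that the contributions from the large circular arc and from the small circle about the origin both vanish in the limit. On the large arc the factor $|p^\alpha+\lambda|^{-1}$ decays like $R^{-\alpha}$, and a Jordan-type estimate controls $|e^{pt}|$ on the portions entering the left half-plane; near the origin the integrand stays bounded (its denominator tends to $\lambda\neq0$) while the arc length shrinks to zero, so that piece drops out. The decay bound of Lemma \ref{mittag_bound} is convenient here for the uniform control of the tails. Alternatively, the conclusion follows from the complete monotonicity of $t\mapsto E_{\alpha,\alpha}(-t)$ for $0<\alpha\le1$, a consequence of Pollard's theorem and its extension to the range $\beta\ge\alpha$; I would nonetheless favor the explicit representation above, since it is more self-contained and exhibits the nonnegativity directly.
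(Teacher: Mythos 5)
Your argument is correct, but it is a genuinely different (and far more explicit) route than the paper's, which disposes of this lemma by citation alone: the paper invokes the complete-monotonicity results of Pollard, Schneider and Miller--Samko for $E_{\alpha,\alpha}(-t)$, $0<\alpha\le 1$, of which nonnegativity is an immediate consequence. What you do instead is derive the classical spectral representation
\begin{equation*}
t^{\alpha-1}E_{\alpha,\alpha}(-\lambda t^{\alpha})
=\frac{\sin(\alpha\pi)}{\pi}\int_0^{\infty}e^{-rt}\,
\frac{r^{\alpha}}{r^{2\alpha}+2\lambda r^{\alpha}\cos(\alpha\pi)+\lambda^{2}}\,dr ,
\end{equation*}
whose integrand is manifestly nonnegative; your transform identity, the jump computation across the cut, and the positivity of the denominator $(r^{\alpha}+\lambda\cos\alpha\pi)^{2}+\lambda^{2}\sin^{2}\alpha\pi$ are all correct, and specializing $t=1$, $\lambda=z$ gives the claim. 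One small point you should make explicit for the contour deformation to be legitimate: $1/(p^{\alpha}+\lambda)$ has \emph{no poles} in the cut plane, because $|\arg p^{\alpha}|=\alpha|\arg p|\le\alpha\pi<\pi$ for $0<\alpha<1$, so $p^{\alpha}$ never equals the negative number $-\lambda$; hence only the branch-cut jump survives. With that noted, your vanishing estimates on the large arc and the small circle are routine. As for what each approach buys: the paper's citation is shorter and hides the analysis; your representation is self-contained, actually yields \emph{strict} positivity, and --- since it exhibits $t^{\alpha-1}E_{\alpha,\alpha}(-\lambda t^{\alpha})$ as the Laplace transform of a nonnegative density --- recovers by Bernstein's theorem the very complete monotonicity that the cited references assert, so your ``alternative'' closing remark is really the same fact seen from the other side.
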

\begin{proof}
 This proof can be found in \cite{miller2001completely,pollard1948completely,schneider1996completely}.
\end{proof}

\begin{lemma}\label{mittag_positive_1}
\par For $0<\alpha<1,$ $E_{\alpha,1}(-t^\alpha)$ is completely monotonic, 
that is,  
$$(-1)^n \frac{d^n}{dt^n}E_{\alpha,1}(-t^\alpha)\ge 0,\ for\ t>0
\ and\ n=0,1,2,\cdots.$$
\end{lemma}

\begin{proof}
 See \cite{gorenflofractional}.
\end{proof}

\subsection{Fractional calculus} 

\par In this part, we collect some results of fractional calculus. 
The next lemma states the extremal principle of ${\D}.$
\begin{lemma}\label{extreme}
 Fix $0<\alpha<1$ and given $f(t)\in C[0,T]$ with $\D f \in C[0,T].$ 
 If $f$ attains its maximum (minimum) over the interval $[0,T]$ at the point 
 $t=t_0,\ t_0\in (0,T],$ then 
 $^C\!D_{t_0}^{\alpha} f \ge(\le) 0.$ 
\end{lemma}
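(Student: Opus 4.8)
The plan is to reduce everything to the maximum case and then rewrite $^C\!D_{t_0}^{\alpha}$ as an integral whose integrand has a fixed sign. Since a minimum of $f$ is a maximum of $-f$ and $^C\!D_{t_0}^{\alpha}$ is linear, it suffices to treat the maximum case, so assume $f$ attains its maximum on $[0,T]$ at $t_0\in(0,T]$. First I would introduce the auxiliary function $g(t):=f(t_0)-f(t)$. By construction $g(t)\ge 0$ on $[0,T]$, $g(t_0)=0$, and in particular $g(0)=f(t_0)-f(0)\ge 0$. Because $^C\!D_{t_0}^{\alpha}$ is linear and annihilates constants, $^C\!D_{t_0}^{\alpha}f=-\,{}^C\!D_{t_0}^{\alpha}g$, so the claim $^C\!D_{t_0}^{\alpha}f\ge 0$ is equivalent to $^C\!D_{t_0}^{\alpha}g\le 0$.

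The heart of the argument is to convert
$$\Gamma(1-\alpha)\,{}^C\!D_{t_0}^{\alpha}g=\int_0^{t_0}(t_0-\tau)^{-\alpha}g'(\tau)\,{\rm d}\tau$$
into a sign-definite form. Writing $g'(\tau)\,{\rm d}\tau={\rm d}\big(g(\tau)-g(t_0)\big)$ and integrating by parts, using $\frac{d}{d\tau}(t_0-\tau)^{-\alpha}=\alpha(t_0-\tau)^{-\alpha-1}$, I would arrive at the Marchaud-type representation
$$\Gamma(1-\alpha)\,{}^C\!D_{t_0}^{\alpha}g=\frac{g(t_0)-g(0)}{t_0^{\alpha}}+\alpha\int_0^{t_0}\frac{g(t_0)-g(\tau)}{(t_0-\tau)^{\alpha+1}}\,{\rm d}\tau.$$
Substituting $g(t_0)=0$ collapses the right-hand side to
$$\Gamma(1-\alpha)\,{}^C\!D_{t_0}^{\alpha}g=-\frac{g(0)}{t_0^{\alpha}}-\alpha\int_0^{t_0}\frac{g(\tau)}{(t_0-\tau)^{\alpha+1}}\,{\rm d}\tau.$$
Since $0<\alpha<1$ gives $\Gamma(1-\alpha)>0$, while $g(0)\ge 0$ and $g(\tau)\ge 0$ make both terms on the right nonnegative, I conclude $^C\!D_{t_0}^{\alpha}g\le 0$, hence $^C\!D_{t_0}^{\alpha}f\ge 0$.

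The hard part will be justifying the integration by parts rather than the final sign count. Concretely, one must show that the boundary contribution $\lim_{\tau\to t_0^{-}}(t_0-\tau)^{-\alpha}\big(g(\tau)-g(t_0)\big)$ vanishes and that the singular integral $\int_0^{t_0}(t_0-\tau)^{-\alpha-1}\big(g(t_0)-g(\tau)\big)\,{\rm d}\tau$ converges. Mere continuity of $f$ is too weak here, because the kernel $(t_0-\tau)^{-\alpha-1}$ is only borderline integrable against a continuity modulus; what is needed is a little extra regularity of $f$ near $t_0$, which is exactly what the hypotheses $f\in C[0,T]$ and $\D f\in C[0,T]$ (together with the differentiability already implicit in the definition of $\D f$) provide. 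Once this technical point is settled---or invoked from the standard theory behind the weak maximum principle of \cite{luchko2009maximum}---the representation above and the nonnegativity of $g$ finish the proof.
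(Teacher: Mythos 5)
Your argument is correct and is essentially the paper's proof: the paper simply defers to the proof of Theorem 1 in \cite{luchko2009maximum}, and that proof is exactly your construction --- the auxiliary function $g(\tau)=f(t_0)-f(\tau)\ge 0$, integration by parts to the Marchaud-type representation with the sign-definite terms $-g(0)/t_0^{\alpha}$ and $-\alpha\int_0^{t_0}g(\tau)(t_0-\tau)^{-\alpha-1}\,{\rm d}\tau$, and the reduction of the minimum case via $\overline{f}=-f$. The technical point you flag (vanishing of the boundary term at $\tau\to t_0^-$ and convergence of the singular integral) is precisely what Luchko handles by splitting off an $\varepsilon$-neighborhood of $t_0$ before integrating by parts, so invoking that standard device closes the gap.
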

\begin{proof}
 \par Even though the conditions are different from the ones of \cite[Theorem 1]{luchko2009maximum}, the maximum case can be proved following the proof of  \cite[Theorem 1]{luchko2009maximum}. 
 For the minimum case, we only need to set $\overline{f}=-f.$ 
\end{proof}

\par The following lemma about the composition between 
$\D$ and the fractional integral $\I$ is presented in 
\cite{samko1993fractional}.

\begin{lemma}\label{I_alpha}
Define the Riemann-–Liouville $\alpha$-th order integral $I_t^\alpha$ as 
$$
 \I u=\frac{1}{\Gamma(\alpha)}\int_{0}^{t} (t-\tau)^{\alpha-1}
 u(\tau) {\rm d}\tau.
 $$
 For $0<\alpha<1,$ $u(t),{\D} u\in C[0,T],$ we have
$$(\D \circ \I u)(t)=u(t),\quad (\I\circ {\D} u)(t)=u(t)-u(0),\ t\in[0,T].$$
\end{lemma}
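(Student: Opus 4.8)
The plan is to reduce both composition identities to two ingredients: the semigroup (index) law for Riemann--Liouville integrals, $I_t^\beta \circ I_t^\gamma = I_t^{\beta+\gamma}$ for $\beta,\gamma>0$, together with the observation that the Djrbashian--Caputo derivative factors through the fractional integral as $\D f = I_t^{1-\alpha}(f')$, which is immediate from the definition of $\D$ once $f'$ is integrable. First I would establish the semigroup law: writing $I_t^\beta(I_t^\gamma u)$ as a double integral, I would apply Fubini's theorem and then evaluate the inner integral as a Beta-function integral, $\int_s^t (t-\tau)^{\beta-1}(\tau-s)^{\gamma-1}\,d\tau = (t-s)^{\beta+\gamma-1}B(\beta,\gamma)$, and use $B(\beta,\gamma)=\Gamma(\beta)\Gamma(\gamma)/\Gamma(\beta+\gamma)$ to collapse the constants and recover $I_t^{\beta+\gamma}u$. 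The same computation gives the power rule $I_t^{\beta}(t^{\gamma}) = \frac{\Gamma(\gamma+1)}{\Gamma(\gamma+\beta+1)}\,t^{\gamma+\beta}$, which I will need below.

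With these in hand the second identity is the easy one. Using $\D u = I_t^{1-\alpha}(u')$ and the semigroup law with $\beta=\alpha$, $\gamma=1-\alpha$,
\[
(\I \circ \D u)(t) = I_t^{\alpha}\bigl(I_t^{1-\alpha}(u')\bigr) = I_t^{1}(u') = \int_0^t u'(\tau)\,d\tau = u(t)-u(0),
\]
where the final step is the fundamental theorem of calculus.

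For the first identity I would begin by differentiating $\I u$. Integrating by parts peels off a boundary term and yields $\frac{d}{dt}(\I u)(t) = \frac{t^{\alpha-1}}{\Gamma(\alpha)}\,u(0) + I_t^{\alpha}(u')(t)$. Since $\D(\I u) = I_t^{1-\alpha}\bigl(\tfrac{d}{dt}\I u\bigr)$, applying $I_t^{1-\alpha}$ to this and invoking the power rule evaluates the boundary contribution as $u(0)\,I_t^{1-\alpha}\bigl(t^{\alpha-1}/\Gamma(\alpha)\bigr) = u(0)$, while the semigroup law handles the remaining term, $I_t^{1-\alpha}\bigl(I_t^\alpha u'\bigr) = I_t^1(u') = u(t)-u(0)$. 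Adding the two pieces gives $(\D\circ\I u)(t) = u(t)$.

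The main obstacle is analytic rather than algebraic: justifying these manipulations under the stated hypotheses $u,\D u\in C[0,T]$. In particular I must verify that $u$ is absolutely continuous, so that $u'$ exists a.e.\ and is integrable (legitimizing the factorization $\D f = I_t^{1-\alpha}f'$, the integration by parts, and the use of Fubini), and I must check the integrability of the singular kernels $(t-\tau)^{-\alpha}$ and $(t-\tau)^{\alpha-1}$ near $\tau=t$, which holds precisely because $0<\alpha<1$. The asymmetry between the two identities, namely the extra term $u(0)$ appearing in the first, is exactly the boundary contribution that distinguishes $\D\circ\I$ from $\I\circ\D$.
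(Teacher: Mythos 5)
Your argument is correct in substance, but note that the paper does not actually prove this lemma: it only cites \cite{samko1993fractional}, so there is no in-paper proof to compare against. What you have written is essentially the standard derivation such references contain: the semigroup law for $\I$ via Fubini and the Beta integral, the factorization $\D f = I_t^{1-\alpha}(f')$, and the integration by parts producing the boundary term $u(0)\,t^{\alpha-1}/\Gamma(\alpha)$, which $I_t^{1-\alpha}$ converts into the constant $u(0)$ by the power rule $I_t^{1-\alpha}(t^{\alpha-1})=\Gamma(\alpha)$. All of these computations check out. One point deserves care: your proof of the first identity $(\D\circ\I u)(t)=u(t)$ routes through $u'$, so it genuinely needs $u$ to be absolutely continuous, whereas the classical version of this identity holds for any $u\in C[0,T]$ with no differentiability at all, because the Caputo derivative in $\D\circ\I u$ falls on $\I u$, not on $u$. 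Under the hypotheses as stated this is harmless --- the assumption $\D u\in C[0,T]$ already presupposes that $u'$ exists a.e.\ and is integrable enough for the Caputo derivative of $u$ to be defined --- but you should say explicitly that it is this hypothesis, not mere continuity of $u$, that licenses the integration by parts; as written you prove a marginally weaker statement than the one available. With that caveat made precise, and with the routine verification of Fubini and of the integrability of the kernels $(t-\tau)^{-\alpha}$ and $(t-\tau)^{\alpha-1}$ that you already flag, the proof is complete.
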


\subsection{Eigensystem of $-\L$}
\par Since $-\L$ is a symmetric uniformly elliptic operator, 
we denote the eigensystem of $-\L$ by $\{(\lambda_n,\phi_n):\N+\}.$ 
Then we have $0<\lambda_1\le \lambda_2\le \cdots$ where finite multiplicity is possible, $\lambda_n\to \infty$ and $\{\phi_n:\N+\}\subset H^2(\Omega)\cap 
H_0^1(\Omega)$ forms an orthonormal basis of $L^2(\Omega).$ 
\par Moreover, with the condition \eqref{sobolev assumption}, for each $\N+$, it holds that $\phi_n \in H^3(\Omega)$ \cite{Evans2010}. 
Then by the Sobolev Embedding Theorem, we have $\phi_n\in C^1(\overline{\Omega})$ and  
$\frac{\partial \phi_n}{\n}(x_0)$ is well-defined for each $\N+$. Hence,  without loss of generality, we can suppose 
\begin{equation}\label{sign_eigenfunction_derivative}
\frac{\partial \phi_n}{\n}(x_0)\ge 0,\ \text{for each}\ \N+.
\end{equation}
Otherwise, if $\frac{\partial \phi_k}{\n}(x_0)< 0$ for some 
$k\in \mathbb{N}^{+},$ we can replace $\phi_k$ by $-\phi_k.$ 
$-\phi_k$ satisfies all the properties we need, such as it is 
an eigenfunction of $-\L$ corresponding to the eigenvalue $\lambda_k,$ 
composes an orthonormal basis of $L^2(\Omega)$ together with 
$\{\phi_n:\N+,n\ne k\}$ and $\frac{\partial (-\phi_k)}{\n}(x_0)\ge 0.$
The assumption \eqref{sign_eigenfunction_derivative} will be used in Section 4.

\section{Direct Problem--Existence, Uniqueness and Regularity}
\par Throughout this section, we suppose $a(t),$ $u_0(x)$ and $F(x,t)$ 
satisfy the following assumptions:
\begin{assumption}\label{assumption_direct}
~
\begin{itemize}
 \item[(a)] $a(t)\in C^{+}[0,T]:=\{\psi\in C[0,T]:\psi(t)>0,\ t\in[0,T]\};$
 \item[(b)] $F(x,t) \in C([0,T];L^2(\Omega));$
 \item[(c)] $u_0(x) \in H_0^1(\Omega).$ 
\end{itemize}
\end{assumption}

\subsection{Spectral Representation}

\begin{definition}\label{weak solution}
   We call $u(x,t;a)$ a weak solution of  
   FDE \eqref{fde} in $L^2(\Omega)$ corresponding to the  
   coefficient $a(t)$ if 
   $u(\cdot,t;a)\in H_0^1(\Omega)$ for $t\in(0,T]$ and 
   for any $\psi(x)\in H^2(\Omega)\cap H_0^1(\Omega),$ it holds  
   \begin{equation*}
   \begin{split}
    &(\D u(x,t;a),\psi(x))-(a(t)\L u(x,t;a),\psi(x))=(F(x,t),\psi(x)),
   \ t\in(0,T];\\
   &(u(x,0;a),\psi(x))=(u_0(x),\psi(x)),
   \end{split}
   \end{equation*}
where $(\cdot,\cdot)$ is the inner product in $L^2(\Omega).$
\end{definition}

\par With the above definition, we give a spectral representation 
for the weak solution in the following lemma. 

\begin{lemma}\label{spectral representation}
 Define $b_n:=(u_0(x),\phi_n(x)), F_n(t)=(F(x,t),\phi_n(x)),
 \ \N+.$
 The spectral representation of the weak solution of FDE  
 \eqref{fde} is 
 \begin{equation}\label{solution}
  u(x,t;a)=\sum_{n=1}^\infty u_n(t;a)\phi_n(x),\ (x,t)\in \Omega\times [0,T], 
 \end{equation}
 where 
 $u_n(t;a)$ satisfies the fractional ODE
 \begin{equation}\label{ODE}
 \D u_n(t;a)+\lambda_n a(t) u_n(t;a)=F_n(t),
  \ u_n(0;a)=b_n,\ \N+.
 \end{equation}

\end{lemma}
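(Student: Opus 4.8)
The plan is to obtain the representation directly from the definition of the weak solution by projecting onto the orthonormal basis. First I would set $u_n(t;a):=(u(\cdot,t;a),\phi_n)$. Since $\{\phi_n:\N+\}$ is an orthonormal basis of $L^2(\Omega)$ and, by Definition \ref{weak solution}, $u(\cdot,t;a)\in H_0^1(\Omega)\subset L^2(\Omega)$ for each $t$, Parseval's identity gives the expansion \eqref{solution} with convergence in $L^2(\Omega)$ for every fixed $t$. It then remains to show that these Fourier coefficients $u_n(t;a)$ solve the fractional ODE \eqref{ODE}. The natural way to do this is to insert $\psi=\phi_n$ into the weak formulation, which is legitimate since $\phi_n\in H^2(\Omega)\cap H_0^1(\Omega)$ is an admissible test function.

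With $\psi=\phi_n$ the weak formulation reads $(\D u,\phi_n)-(a(t)\L u,\phi_n)=(F,\phi_n)$. The source term is immediately $(F(\cdot,t),\phi_n)=F_n(t)$, and the initial condition yields $(u(\cdot,0;a),\phi_n)=(u_0,\phi_n)=b_n$, i.e. $u_n(0;a)=b_n$. For the elliptic term I would use that $-\L$ is self-adjoint and that $-\L\phi_n=\lambda_n\phi_n$: interpreting the pairing through the symmetric bilinear form associated with $-\L$ and transferring the derivatives onto the smooth test function $\phi_n$, we get $(a(t)\L u,\phi_n)=a(t)(u,\L\phi_n)=-\lambda_n a(t)(u,\phi_n)=-\lambda_n a(t)u_n(t;a)$. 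Substituting these three identities turns the weak formulation into $\D u_n(t;a)+\lambda_n a(t)u_n(t;a)=F_n(t)$, which is exactly \eqref{ODE}.

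The one step that needs genuine care—and which I expect to be the main obstacle—is the identity $(\D u(\cdot,t;a),\phi_n)=\D u_n(t;a)$, i.e. commuting the Djrbashian--Caputo operator with the spatial inner product. Because $\D$ is the time-convolution $\frac{1}{\Gamma(1-\alpha)}\int_0^t(t-\tau)^{-\alpha}\partial_\tau u(\cdot,\tau)\,d\tau$, pulling $(\cdot,\phi_n)$ inside requires interchanging the $L^2(\Omega)$ inner product with both the $\tau$-integral and the time derivative $\partial_\tau$. I would justify this by Fubini's theorem together with the continuity and linearity of the inner product, using the regularity in time of $t\mapsto u(\cdot,t;a)$ (enough for $\partial_\tau u(\cdot,\tau)$ to be integrable against the weakly singular kernel $(t-\tau)^{-\alpha}$ as an $L^2(\Omega)$-valued function). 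Once this interchange is established, $(\D u,\phi_n)=\frac{1}{\Gamma(1-\alpha)}\int_0^t(t-\tau)^{-\alpha}\partial_\tau(u(\cdot,\tau),\phi_n)\,d\tau=\D u_n(t;a)$, which completes the derivation of \eqref{ODE}.
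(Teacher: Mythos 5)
Your proposal is correct and follows essentially the same route as the paper: test the weak formulation against $\phi_n$, use the symmetry of $-\L$ and the eigenvalue relation to obtain the fractional ODE for the Fourier coefficients, and invoke completeness of $\{\phi_n\}$ for the expansion. The only difference is that you explicitly justify commuting $\D$ with the $L^2(\Omega)$ inner product via Fubini, a step the paper's proof performs without comment.
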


\begin{proof}
 \par For each $\N+,$ multiplying $\phi_n(x)$ on both sides of 
 FDE \eqref{fde} and integrating it on $x$ over $\Omega$ 
 allow us to deduce that   
\begin{equation}\label{equality_1}
 \D( u(x,t;a), \phi_n(x))+\lambda_n a(t)(u(x,t;a),\phi_n(x))=F_n(t),
\end{equation}
where 
$(-\L u(x,t;a),\phi_n(x))=(u(x,t;a),-\L\phi_n(x))
=\lambda_n (u(x,t;a),\phi_n(x))$ follows from the symmetricity of $-\L.$
Set $u_n(t;a)=(u(x,t;a), \phi_n(x))$ and define 
$u(x,t;a)=\sum\limits_{n=1}^\infty u_n(t;a)\phi_n(x).$ 
Then \eqref{equality_1} and the completeness of 
$\{\phi_n(x):\N+\}$ lead to the desired result.
\end{proof}

\subsection{Existence and Uniqueness}
\par In order to show the existence and uniqueness 
of the weak solution \eqref{solution}, we state the following lemma 
\cite[Theorem 3.25]{kilbas2006theory}.

\begin{lemma}\label{kilbassrivastavatrujillo}
 For the Cauchy-type problem
 $$\D y=f(y,t),\ y(0)=c_0,$$
 if for any continuous $y(t),$ $f(y,t)\in C[0,T]$, $\exists A>0$ 
 which is independent of $y\in C[0,T]$ and $t\in[0,T]$ s.t. 
 $\lvert f(t,y_1)-f(t,y_2)\rvert \le A\lvert y_1-y_2\rvert,$
then there exists a unique solution $y(t)$ for the Cauchy-type 
problem, which satisfies $\D y\in C[0,T]$.
\end{lemma}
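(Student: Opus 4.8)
The plan is to recast the Cauchy-type problem as an equivalent Volterra integral equation and then solve it by a fixed-point argument. First I would apply the fractional integral $\I$ to both sides of $\D y=f(y,t)$ and invoke Lemma~\ref{I_alpha}, whose second identity gives $(\I\circ\D y)(t)=y(t)-y(0)$. Since $y(0)=c_0$, the differential problem becomes
\begin{equation*}
 y(t)=c_0+\frac{1}{\Gamma(\alpha)}\int_0^t (t-\tau)^{\alpha-1}f(y(\tau),\tau)\,{\rm d}\tau,\quad t\in[0,T].
\end{equation*}
The kernel $(t-\tau)^{\alpha-1}$ is integrable and the hypothesis $f(y,\cdot)\in C[0,T]$ makes the right-hand side continuous in $t$, so this integral equation is the natural object to work with. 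I would treat it as the working notion of solution, construct a continuous $y$, and only afterwards read off the original ODE.

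Next I would define the operator $T\colon C[0,T]\to C[0,T]$ by $(Ty)(t)=c_0+\I f(y,t)$; it maps $C[0,T]$ into itself because the fractional integral of a continuous function is continuous. The Lipschitz assumption gives the pointwise estimate
\begin{equation*}
 |(Ty_1)(t)-(Ty_2)(t)|\le \frac{A}{\Gamma(\alpha)}\int_0^t (t-\tau)^{\alpha-1}|y_1(\tau)-y_2(\tau)|\,{\rm d}\tau .
\end{equation*}
A single application need not contract in the sup-norm when $AT^\alpha$ is large, so the key idea is to iterate. Using the semigroup property of the fractional-integral kernel (the $m$-fold convolution of $t^{\alpha-1}/\Gamma(\alpha)$ with itself is $t^{m\alpha-1}/\Gamma(m\alpha)$), a straightforward induction yields
\begin{equation*}
 \|T^m y_1-T^m y_2\|_{C[0,T]}\le \frac{(A\,T^\alpha)^m}{\Gamma(m\alpha+1)}\,\|y_1-y_2\|_{C[0,T]} .
\end{equation*}
The constants $(A\,T^\alpha)^m/\Gamma(m\alpha+1)$ are precisely the summands of the convergent series $E_{\alpha,1}(A\,T^\alpha)$, so they tend to $0$; hence $T^m$ is a strict contraction for all sufficiently large $m$.

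I would then finish by the generalized contraction-mapping principle: if some iterate $T^m$ is a contraction on the complete metric space $C[0,T]$, then $T$ itself has a unique fixed point $y$. This $y$ solves the integral equation, giving both existence and uniqueness. For the regularity claim, I would apply $\D$ to $y=c_0+\I f(y,\cdot)$ and use the first identity $(\D\circ\I u)(t)=u(t)$ of Lemma~\ref{I_alpha} with $u=f(y,\cdot)$, obtaining $\D y=f(y,t)$; since $y\in C[0,T]$ and $f(y,\cdot)\in C[0,T]$ by hypothesis, the composition is continuous, so $\D y\in C[0,T]$. With $y,\D y\in C[0,T]$ in hand, Lemma~\ref{I_alpha} then confirms a posteriori that $y$ indeed satisfies the original Cauchy problem, closing the equivalence.

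I expect the main obstacle to be the iterated estimate. One must carefully track the convolution powers of the weakly singular kernel and verify the induction step producing the $t^{m\alpha-1}/\Gamma(m\alpha)$ kernel, and then confirm that the Gamma-function denominator $\Gamma(m\alpha+1)$ grows fast enough to drive the contraction constant below $1$. The recognition that these constants are tail summands of an entire Mittag-Leffler series is exactly what makes the argument close cleanly without any smallness restriction on $T$ or $A$.
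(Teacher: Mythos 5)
Your argument is correct and complete. Note, however, that the paper does not prove this lemma at all: it simply cites \cite[Theorem~3.25]{kilbas2006theory}, so there is no in-paper proof to compare against. What you have written is essentially the standard proof of that cited theorem. The reduction to the Volterra equation $y=c_0+\I f(y,\cdot)$ via Lemma~\ref{I_alpha}, the induction producing the kernel $t^{m\alpha-1}/\Gamma(m\alpha)$ (which rests on the Beta-function identity $\int_0^t(t-\tau)^{\alpha-1}\tau^{m\alpha}\,{\rm d}\tau=\Gamma(\alpha)\Gamma(m\alpha+1)t^{(m+1)\alpha}/\Gamma((m+1)\alpha+1)$), and the observation that $(AT^\alpha)^m/\Gamma(m\alpha+1)$ are the terms of the convergent series $E_{\alpha,1}(AT^\alpha)$ are all sound, as is the appeal to the generalized contraction principle for an iterate $T^m$. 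The one place worth being explicit is the equivalence itself: passing from the differential problem to the integral equation via $(\I\circ\D y)(t)=y(t)-y(0)$ presupposes $\D y\in C[0,T]$, which is part of what is being proved; you handle this correctly by taking the integral equation as the primary object and recovering the ODE a posteriori with $(\D\circ\I u)(t)=u(t)$, but that logical ordering should be stated as the definition of the solution class rather than as a reformulation. For comparison, the proof in the cited reference globalizes differently, by partitioning $[0,T]$ into subintervals short enough that a single application of the Picard operator is already a contraction on each piece; your Weissinger-type argument with the Mittag-Leffler coefficients achieves the same global result in one step and is arguably cleaner.
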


\par The theorem of existence and uniqueness for $u(x,t;a)$ follows from 
Lemma \ref{kilbassrivastavatrujillo}.
\begin{theorem}[Existence and Uniqueness]\label{existenceuniqueness}
 Suppose Assumption \ref{assumption_direct} holds. Under 
 Definition \ref{weak solution}, there exists a unique weak solution 
 $u(x,t;a)$ of FDE \eqref{fde} with the spectral 
 representation \eqref{solution} and
 for each $\N+,$ $u_n(t;a)\in C[0,T]$ is the unique solution of 
 the fractional ODE \eqref{ODE} with $\D u_n(t;a) \in C[0,T].$ 
\end{theorem}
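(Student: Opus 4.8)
The plan is to reduce the existence and uniqueness of the weak solution to the well-posedness of the scalar fractional ODEs $\eqref{ODE}$ for each mode, and then to show that the resulting spectral series $\eqref{solution}$ genuinely defines an element of $H_0^1(\Omega)$ for each $t\in(0,T]$. The modal reduction is already provided by Lemma $\ref{spectral representation}$, so the first concrete step is to verify that each fractional ODE falls under Lemma $\ref{kilbassrivastavatrujillo}$. Writing $\eqref{ODE}$ as $\D u_n = f_n(u_n,t)$ with $f_n(y,t) := F_n(t) - \lambda_n a(t)\, y$, I would check the two hypotheses of Lemma $\ref{kilbassrivastavatrujillo}$: continuity of $f_n$ in $t$ for fixed continuous $y$, which follows from $a\in C^+[0,T]$ (Assumption $\ref{assumption_direct}$(a)) and from $F_n\in C[0,T]$, the latter being a consequence of $F\in C([0,T];L^2(\Omega))$ (Assumption $\ref{assumption_direct}$(b)) together with the Cauchy--Schwarz inequality $|F_n(t_1)-F_n(t_2)| \le \|F(\cdot,t_1)-F(\cdot,t_2)\|_{L^2}\,\|\phi_n\|_{L^2}$; and the Lipschitz condition, where $|f_n(t,y_1)-f_n(t,y_2)| = \lambda_n |a(t)|\,|y_1-y_2| \le \bigl(\lambda_n \max_{[0,T]}|a|\bigr)|y_1-y_2|$, so the Lipschitz constant $A_n := \lambda_n \|a\|_{C[0,T]}$ is independent of $y$ and $t$. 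Lemma $\ref{kilbassrivastavatrujillo}$ then yields, for each fixed $n$, a unique $u_n(\cdot;a)\in C[0,T]$ solving $\eqref{ODE}$ with $\D u_n(\cdot;a)\in C[0,T]$ and $u_n(0;a)=b_n$.

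Having the modes, the main obstacle is to show that $u(x,t;a) = \sum_{n} u_n(t;a)\phi_n(x)$ converges in $H_0^1(\Omega)$ (equivalently that $\sum_n \lambda_n u_n(t;a)^2 < \infty$) and that the assembled function is a weak solution in the sense of Definition $\ref{weak solution}$. For this I would derive a quantitative a priori bound on the modal solution that reflects the smoothing of the fractional diffusion. The plan is to write $u_n$ via the variation-of-parameters representation built from Mittag-Leffler kernels: for the frozen-coefficient problem one has $E_{\alpha,1}(-\lambda_n a t^\alpha)$-type solutions, and Lemma $\ref{mittag_bound}$ (the bound $|E_{\alpha,\beta}(z)|\le C/(1+|z|)$) together with Lemma $\ref{mittag_derivative}$ supplies the decay $|u_n(t;a)| \lesssim |b_n| + \lambda_n^{-1}\sup_{[0,T]}|F_n|$ needed to control the series. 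The delicate point relative to the one-dimensional predecessor is precisely that $a=a(t)$ is not constant, so the clean Mittag-Leffler formula is unavailable; I expect to handle the time dependence by a Gronwall-type fractional inequality applied to $v_n := \lambda_n^{1/2} u_n$, or alternatively by invoking the extremal principle of Lemma $\ref{extreme}$ to bound $\|u_n\|_{C[0,T]}$ in terms of $|b_n|$ and $\|F_n\|_{C[0,T]}$ uniformly in the lower bound $\min_{[0,T]} a > 0$. Summing the resulting estimate against $\lambda_n$ and using $u_0\in H_0^1(\Omega)$ (so $\sum_n \lambda_n b_n^2 = \|u_0\|_{H_0^1}^2 < \infty$, via the spectral characterization of $H_0^1$) and $F\in C([0,T];L^2)$ (so $\sum_n F_n(t)^2 = \|F(\cdot,t)\|_{L^2}^2 < \infty$) would close the convergence argument.

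With convergence in hand, I would verify that the series solution satisfies Definition $\ref{weak solution}$: testing against an arbitrary $\psi\in H^2(\Omega)\cap H_0^1(\Omega)$, expand $\psi = \sum_n (\psi,\phi_n)\phi_n$, use the symmetry of $-\L$ to pass $\lambda_n$ onto the eigenfunction, and recognize that the termwise identities are exactly $\eqref{ODE}$ paired with $(\psi,\phi_n)$; justifying the interchange of $\D$ with the infinite sum requires the uniform (in $t$) summability of $\sum_n \D u_n(t;a)(\psi,\phi_n)$, which again follows from the a priori modal bounds combined with $\psi\in H^2$ giving $\sum_n \lambda_n^2 (\psi,\phi_n)^2 < \infty$. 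The initial condition $u(x,0;a)=u_0(x)$ follows from $u_n(0;a)=b_n$ and the completeness of $\{\phi_n\}$. Finally, uniqueness of the weak solution is inherited from the uniqueness of each modal $u_n$: if two weak solutions existed, their difference would have vanishing modes $(\cdot,\phi_n)$ by the uniqueness clause of Lemma $\ref{kilbassrivastavatrujillo}$, hence the difference is zero in $L^2(\Omega)$ for every $t$. I anticipate that the genuinely nontrivial work is concentrated in the uniform modal estimate under a merely continuous, non-constant $a(t)$ and in rigorously justifying the term-by-term application of $\D$ to the series; the Lipschitz verification and the initial-condition and uniqueness arguments are routine once the analytic machinery of Section $2$ is in place.
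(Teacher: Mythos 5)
Your core argument is exactly the paper's: reduce to the modal fractional ODEs \eqref{ODE} via Lemma \ref{spectral representation}, check that $f_n(y,t)=F_n(t)-\lambda_n a(t)y$ is continuous and Lipschitz in $y$ (with constant $\lambda_n\|a\|_{C[0,T]}$, using Assumption \ref{assumption_direct}(a),(b)), and invoke Lemma \ref{kilbassrivastavatrujillo} for each $n$ -- and that is where the paper's own proof stops. The additional program you outline (a priori modal bounds via the extremal principle or Mittag--Leffler comparison, $H_0^1$-convergence of the series, and term-by-term justification of the weak formulation) is not part of the paper's proof of this theorem; it is essentially the content the paper defers to the regularity subsection (the splitting $u=u^r+u^i$ and Lemmas \ref{regularity_u^r}, \ref{regularity_u^i}), so your instinct that this is the genuinely nontrivial part is sound, but it is not needed to match the paper's argument here.
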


\begin{proof}
\par From the spectral representation \eqref{solution}, it 
suffices to show the existence and uniqueness of $u_n(t;a),\N+.$
Fix $\N+,$ Assumption \ref{assumption_direct} $(a)$ and $(b)$ yield 
that the fractional ODE \eqref{ODE} satisfies the conditions of 
Lemma \ref{kilbassrivastavatrujillo}. Hence the existence and uniqueness 
for $u_n(t;a)$ hold.
\end{proof}

\subsection{Sign of $u_n(t;a)$}
\par In this part, we state two properties of $u_n(t;a)$ which play important 
roles in building the regularity of $u(x,t;a).$
\begin{lemma}\label{sign}
 Given $h\in C^{+}[0,T],$ $f\in C[0,T]$ with $\D f \in C[0,T],$ 
 if $f(0)\le (\ge)0$ and $\D f+h(t)f(t)\le (\ge)0,$ then $f\le 
 (\ge)0$ on $[0,T]$.
\end{lemma}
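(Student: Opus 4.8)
The plan is to prove the ``$\le$'' case and then recover the ``$\ge$'' case by the substitution $\overline{f}=-f$: since $\D$ is linear we have $\D\overline{f}=-\D f$, the hypothesis $h\in C^{+}[0,T]$ is unchanged, and both inequalities reverse, so $\overline{f}(0)\le 0$ and $\D\overline{f}+h\overline{f}\le 0$ reduce the ``$\ge$'' case to the ``$\le$'' case. I therefore concentrate on showing: if $f(0)\le 0$ and $\D f+h(t)f(t)\le 0$ on $(0,T]$, then $f\le 0$ on $[0,T]$.

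I would argue by contradiction using the extremal principle of Lemma \ref{extreme}. Suppose $f\not\le 0$, so that $M:=\max_{t\in[0,T]}f(t)>0$, the maximum being attained since $f\in C[0,T]$. Because $f(0)\le 0<M$, the maximum cannot occur at $t=0$, hence it is attained at some $t_0\in(0,T]$. The hypotheses of Lemma \ref{extreme}, namely $f\in C[0,T]$ and $\D f\in C[0,T]$, are exactly those assumed here, so its maximum case yields $\D f(t_0)={}^{C}\!D_{t_0}^{\alpha}f\ge 0$.

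Evaluating the differential inequality at $t=t_0$ then gives the contradiction. Indeed, $h\in C^{+}[0,T]$ forces $h(t_0)>0$, and $f(t_0)=M>0$, so $h(t_0)f(t_0)>0$; combined with $\D f(t_0)\ge 0$ this produces $\D f(t_0)+h(t_0)f(t_0)>0$, contradicting the assumed inequality $\D f+h(t)f(t)\le 0$ at $t_0$. Hence $M\le 0$, i.e.\ $f\le 0$ on $[0,T]$.

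The one delicate point, and the only place the sign hypothesis on the initial data enters, is guaranteeing that the extremal time satisfies $t_0\in(0,T]$ rather than $t_0=0$, because Lemma \ref{extreme} supplies sign information about $\D f$ only at interior-or-right-endpoint maximizers. This is secured precisely by combining $f(0)\le 0$ with the strict positivity $M>0$ coming from the contradiction hypothesis; the remainder is a direct application of the extremal principle together with the strict positivity of $h$.
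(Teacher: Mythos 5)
Your proof is correct and follows essentially the same route as the paper: both arguments locate the maximizer $t_0$ of $f$, dispose of the case $t_0=0$ via $f(0)\le 0$, and otherwise apply the extremal principle of Lemma \ref{extreme} together with $h>0$ to force $f(t_0)\le 0$; your contradiction framing versus the paper's direct case split is an immaterial difference.
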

\begin{proof}
 \par Since $f(t)\in C[0,T]$, $f(t)$ attains its maximum over $[0,T]$ 
 at some point $t_0\in [0,T]$. 
 If $t_0=0,$ then $f(t)\le f(0)\le 0.$
 If $t_0\in (0,T],$ with Lemma \ref{extreme}, we have 
 ${}^C\!D_{t}^{\alpha} f(t_0)\ge 0$, which yields $h(t_0)f(t_0)\le 0,$ i.e. 
 $f(t_0)\le 0$ due to $h>0$ on $[0,T].$ 
 The definition of $t_0$ assures $f\le 0$.
 \par For the case of ``$\ge 0$", let $\overline{f}(t)=-f(t),$ then the 
 above proof gives $\overline{f}\le 0,$ i.e. $f\ge 0$.
\end{proof}

\par The following corollary, which concerns the sign of $u_n(t;a)$, follows 
from Lemma \ref{sign} directly. 
\begin{corollary}\label{sign_eigenfunction}
 Set $u_n(t;a)$ be the unique solution of 
 the fractional ODE \eqref{ODE}. Then 
 $\D u_n(t;a)+\lambda_n a(t) u_n(t;a)\le (\ge) 0$ on $[0,T]$ and 
 $u_n(0;a) \le (\ge) 0$ imply  $u_n(t;a)\le (\ge)0$ on $[0,T],\ \N+$. 
\end{corollary}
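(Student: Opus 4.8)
The plan is to derive Corollary~\ref{sign_eigenfunction} as an essentially immediate consequence of Lemma~\ref{sign}, simply by matching the hypotheses of the corollary to those of the lemma. The key observation is that $u_n(t;a)$ is, by Theorem~\ref{existenceuniqueness}, a function in $C[0,T]$ with $\D u_n(t;a)\in C[0,T]$, so it qualifies as the function $f$ in Lemma~\ref{sign}. Likewise the coefficient in the fractional ODE \eqref{ODE} is $\lambda_n a(t)$, and I would set $h(t):=\lambda_n a(t)$ to play the role of $h$ in the lemma.

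First I would verify the regularity prerequisites: by Assumption~\ref{assumption_direct}(a), $a\in C^+[0,T]$, and since $\lambda_n>0$ for each $\N+$ (from the eigensystem discussion), the product $h(t)=\lambda_n a(t)$ is continuous and strictly positive on $[0,T]$, hence $h\in C^+[0,T]$. Next I would note that $f=u_n(\cdot;a)$ satisfies $f\in C[0,T]$ and $\D f\in C[0,T]$, exactly the smoothness demanded by Lemma~\ref{sign}. With these identifications, the differential inequality hypothesis of the corollary, namely $\D u_n(t;a)+\lambda_n a(t)u_n(t;a)\le(\ge)0$ on $[0,T]$, is precisely $\D f+h(t)f(t)\le(\ge)0$, and the initial condition $u_n(0;a)\le(\ge)0$ is precisely $f(0)\le(\ge)0$.

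Having matched all hypotheses, I would invoke Lemma~\ref{sign} directly to conclude $f\le(\ge)0$ on $[0,T]$, that is, $u_n(t;a)\le(\ge)0$ on $[0,T]$, which is the desired conclusion. Since this holds for every fixed $\N+$, the statement follows for all $n$. I do not anticipate any genuine obstacle here: the corollary is a straightforward specialization of Lemma~\ref{sign} in which the generic positive coefficient $h$ is instantiated as $\lambda_n a(t)$. The only point deserving a word of care is confirming strict positivity of $h$, which relies on $\lambda_n>0$ together with $a(t)>0$; both are guaranteed by the standing assumptions, so the parenthetical ``$\le/\ge$'' dichotomy transfers cleanly from the lemma to the corollary.
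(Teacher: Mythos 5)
Your proposal is correct and follows exactly the paper's own argument: identify $h(t)=\lambda_n a(t)\in C^{+}[0,T]$ and apply Lemma \ref{sign} to $f=u_n(\cdot;a)$, whose required regularity is supplied by Theorem \ref{existenceuniqueness}. The paper's proof is just a terser version of the same specialization.
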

\begin{proof}
 Assumption \ref{assumption_direct} gives that $\lambda_na(t)\in 
 C^{+}[0,T].$ Then the proof is completed by applying Lemma \ref{sign} 
 to the fractional ODE \eqref{ODE}. 
\end{proof}

\subsection{Regularity}

\par In this part, we establish the regularity of $u(x,t;a)$. 
To this end, we split FDE \eqref{fde} into 
 \begin{equation}\label{model_right}
  \begin{aligned}
   \D u(x,t)-a(t)\L u(x,t)&=F(x,t),\ &&x\in \Omega,\ t\in (0,T];\\
   u(x,t)&=0,\ &&(x,t)\in \partial\Omega \times (0,T];\\
   u(x,0)&=0,\ &&x\in \Omega,
   \end{aligned}
 \end{equation}
 and
 \begin{equation}\label{model_initial}
  \begin{aligned}
   \D u(x,t)-a(t)\L u(x,t)&=0,\ &&x\in \Omega,\ t\in (0,T];\\
   u(x,t)&=0,\ &&(x,t)\in \partial\Omega \times (0,T];\\
   u(x,0)&=u_0(x),\ &&x\in \Omega. 
   \end{aligned}
 \end{equation}

\par Denote the weak solutions of FDEs \eqref{model_right} and 
\eqref{model_initial} by $u^r(x,t;a)$ and $u^i(x,t;a)$, respectively
(``r" and ``i" denote the initials of ``right-hand side" and ``initial 
condition"). 
The following lemma about $u^r(x,t;a)$ and $u^i(x,t;a)$ 
follows from Lemma \ref{spectral representation} and 
Theorem \ref{existenceuniqueness}.

\begin{lemma}
Suppose Assumption \ref{assumption_direct} holds. Then
 $u^r(x,t;a)$ and $u^i(x,t;a)$ are the unique solutions for FDEs 
 \eqref{model_right} and \eqref{model_initial}, respectively, with the 
 spectral representations as  
\begin{equation}\label{u^r u^i}
 u^r(x,t;a)=\sum_{n=1}^\infty u^r_n(t;a)\phi_n(x),\ 
u^i(x,t;a)=\sum_{n=1}^\infty u^i_n(t;a)\phi_n(x),
\end{equation}
where $u^r_n(t;a),\ u^i_n(t;a)$ satisfy the following fractional ODEs 
\begin{equation}\label{ODE_right}
 \D u_n^r(t;a)+\lambda_na(t)u_n^r(t;a)=F_n(t),\ u_n^r(0;a)=0,\ \N+;
\end{equation}
\begin{equation}\label{ODE_initial}
 \D u_n^i(t;a)+\lambda_na(t)u_n^i(t;a)=0,\ u_n^i(0;a)=b_n,\ \N+.
\end{equation}
\par Moreover, Theorem \ref{existenceuniqueness} ensures the weak 
solution $u(x,t;a)$ of FDE \eqref{fde} 
can be written as $u(x,t;a)=u^r(x,t;a)+u^i(x,t;a),$ i.e. 
$u_n(t;a)=u^r_n(t;a)+u^i_n(t;a),\ \N+.$
\end{lemma}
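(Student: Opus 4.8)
The plan is to reduce the statement to results already in hand, since the lemma is essentially a linearity/superposition observation. First I would invoke Lemma~\ref{spectral representation} separately for each of the two auxiliary problems \eqref{model_right} and \eqref{model_initial}. For \eqref{model_right}, the same projection argument (multiply the PDE by $\phi_n$, integrate over $\Omega$, and use the symmetry of $-\L$ to replace $(-\L u,\phi_n)$ by $\lambda_n(u,\phi_n)$) yields that the coefficients $u_n^r(t;a):=(u^r(\cdot,t;a),\phi_n)$ satisfy the fractional ODE \eqref{ODE_right} with homogeneous initial data $u_n^r(0;a)=0$, because the initial condition in \eqref{model_right} is $u(x,0)=0$ and hence $b_n^r=(0,\phi_n)=0$. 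For \eqref{model_initial}, the identical computation gives \eqref{ODE_initial} with right-hand side $F\equiv0$, so $F_n\equiv0$, and with initial value $u_n^i(0;a)=(u_0,\phi_n)=b_n$. Thus the spectral representations \eqref{u^r u^i} and the two ODE families follow directly.

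Next I would address uniqueness and the claim $\D u_n^r,\,\D u_n^i\in C[0,T]$. This is immediate from Theorem~\ref{existenceuniqueness}: both \eqref{model_right} and \eqref{model_initial} are instances of FDE \eqref{fde}, one with $u_0\equiv0$ and one with $F\equiv0$, each satisfying Assumption~\ref{assumption_direct} (the zero function lies in $H_0^1(\Omega)$ and in $C([0,T];L^2(\Omega))$ respectively). Hence each auxiliary problem has a unique weak solution whose spectral coefficients are the unique $C[0,T]$ solutions of the corresponding scalar fractional ODE with continuous Caputo derivative, exactly as supplied by Lemma~\ref{kilbassrivastavatrujillo}.

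Finally I would establish the superposition identity $u=u^r+u^i$. The cleanest route is to work at the level of the spectral coefficients: for fixed $n$, set $v_n:=u_n^r+u_n^i$. Adding \eqref{ODE_right} and \eqref{ODE_initial} and using linearity of $\D$ shows that $v_n$ satisfies $\D v_n+\lambda_n a(t)v_n=F_n(t)$ with $v_n(0;a)=0+b_n=b_n$, which is precisely the Cauchy problem \eqref{ODE} defining $u_n(t;a)$. By the uniqueness clause of Theorem~\ref{existenceuniqueness}, $v_n=u_n$ for every $n$, and summing against $\phi_n$ gives $u=u^r+u^i$ in $L^2(\Omega)$.

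I do not expect any genuine obstacle here: the lemma is a bookkeeping consequence of the linearity of the Caputo operator and of the uniqueness already proved. The only point requiring a moment's care is to confirm that the zero initial datum and the zero source each satisfy Assumption~\ref{assumption_direct}, so that Theorem~\ref{existenceuniqueness} genuinely applies to both split problems; once that is noted, the decomposition and the $C[0,T]$ regularity of the fractional derivatives transfer with no further work.
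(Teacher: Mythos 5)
Your proposal is correct and follows essentially the same route the paper intends: the paper itself justifies this lemma only by remarking that it ``follows from Lemma \ref{spectral representation} and Theorem \ref{existenceuniqueness},'' which is precisely the reduction you carry out (projection onto $\phi_n$, uniqueness from the scalar Cauchy problems, and superposition via linearity of $\D$). Your added care in checking that the zero datum and zero source satisfy Assumption \ref{assumption_direct} is a sensible explicit verification of a step the paper leaves implicit.
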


\subsubsection{Regularity of $u^r$}
\par For each $\N+,$ define 
\begin{equation}\label{+-}
 F^{+}_n(t)=\begin{cases}
  F_n(t), &\text{if}\ F_n(t)\ge0;\\
  0, &\text{if}\ F_n(t)<0,
 \end{cases}
 \quad F^{-}_n(t)=\begin{cases}
  F_n(t),&\text{if}\ F_n(t)<0;\\
  0,&\text{if}\ F_n(t)\ge0.
 \end{cases}
\end{equation}
It is obvious that $F_n=F^{+}_n+F^{-}_n,$ the supports 
of $F^{+}_n$ and $F^{-}_n$ are disjoint and 
$F^{+}_n,F^{-}_n \in C[0,T]$ which follows from $F_n\in C[0,T]$. 
Split $u_n^r(t;a)$ as $u_n^r(t;a)=u_n^{r,+}(t;a)+u_n^{r,-}(t;a),$ 
where $u_n^{r,+}(t;a),\ u_n^{r,-}(t;a)$ satisfy 
\begin{equation}\label{ODE_right_+}
 \D u_n^{r,+}(t;a)+\lambda_na(t)u_n^{r,+}(t;a)=F^{+}_n(t),\ u_n^{r,+}(0;a)=0,\ \N+;
\end{equation}
\begin{equation}\label{ODE_right_-}
 \D u_n^{r,-}(t;a)+\lambda_na(t)u_n^{r,-}(t;a)=F^{-}_n(t),\ u_n^{r,-}(0;a)=0,\ \N+,
\end{equation}
respectively. The existence and uniqueness of 
$u_n^{r,+}(t;a)$ and $u_n^{r,-}(t;a)$ hold due to Lemma 
\ref{kilbassrivastavatrujillo} and we can write 
\begin{equation}\label{u^r u^{r,+} u^{r,-}}
u^r(x,t;a)=u^{r,+}(x,t;a)+u^{r,-}(x,t;a),
\end{equation}
where
\begin{equation}\label{u^{r,+} u^{r,-}}
u^{r,+}(x,t;a)=\sum\limits_{n=1}^{\infty}u_n^{r,+}(t;a)\phi_n(x),
\ u^{r,-}(x,t;a)=\sum\limits_{n=1}^{\infty}u_n^{r,-}(t;a)\phi_n(x).
\end{equation}

\par Then we state some properties of $u_n^{r,+}(t;a)$ 
and $u_n^{r,-}(t;a).$

\begin{lemma}\label{sign_right_+-}
For any $\N+,$
 $u_n^{r,+}(t;a)\ge 0$ and $u_n^{r,-}(t;a)\le 0$ on $[0,T].$ 
\end{lemma}
\begin{proof}
 This proof follows from Corollary \ref{sign_eigenfunction} directly.
\end{proof}

\begin{lemma}\label{monotone_right}
 Given $a_1(t), a_2(t)\in C^{+}[0,T]$ with $a_1(t)\le a_2(t)$ on $[0,T]$, 
 we have 
 $$0\le u_n^{r,+}(t;a_2)\le u_n^{r,+}(t;a_1),
\ u_n^{r,-}(t;a_1)\le u_n^{r,-}(t;a_2)\le 0,
  \ t\in [0,T],\ \N+.$$
\end{lemma}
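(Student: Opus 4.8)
The plan is to establish the two outer inequalities directly from the sign information already proved, and then obtain the two comparison inequalities from a single difference argument fed into Lemma \ref{sign}. The bounds $0\le u_n^{r,+}(t;a_2)$ and $u_n^{r,-}(t;a_2)\le 0$ are immediate from Lemma \ref{sign_right_+-}, so it only remains to prove the two middle inequalities $u_n^{r,+}(t;a_2)\le u_n^{r,+}(t;a_1)$ and $u_n^{r,-}(t;a_1)\le u_n^{r,-}(t;a_2)$.

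First I would treat the positive part. Writing $v_j:=u_n^{r,+}(t;a_j)$ for $j=1,2$, both functions solve \eqref{ODE_right_+} with the same forcing $F_n^{+}$ but with coefficients $a_1$ and $a_2$. Subtracting the two equations and setting $w:=v_2-v_1$, the crucial step is to reorganize the zeroth-order term as $\lambda_n a_2 v_2-\lambda_n a_1 v_1=\lambda_n a_2 w+\lambda_n(a_2-a_1)v_1$, so that the difference solves $\D w+\lambda_n a_2(t)\,w=-\lambda_n(a_2-a_1)v_1$ with $w(0)=0$. Because $a_2-a_1\ge 0$, $\lambda_n>0$, and $v_1\ge 0$ by Lemma \ref{sign_right_+-}, the right-hand side is $\le 0$; hence $\D w+\lambda_n a_2(t)\,w\le 0$ with $w(0)=0$. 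Since $\lambda_n a_2\in C^{+}[0,T]$ by Assumption \ref{assumption_direct}(a), Lemma \ref{sign} applies and yields $w\le 0$, i.e.\ $u_n^{r,+}(t;a_2)\le u_n^{r,+}(t;a_1)$.

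The negative part is entirely analogous. Setting $z_j:=u_n^{r,-}(t;a_j)$ and $w:=z_2-z_1$, the identical reorganization gives $\D w+\lambda_n a_2(t)\,w=-\lambda_n(a_2-a_1)z_1$ with $w(0)=0$. Now $z_1\le 0$ by Lemma \ref{sign_right_+-}, so the right-hand side is $\ge 0$, and the ``$\ge$'' branch of Lemma \ref{sign} delivers $w\ge 0$, i.e.\ $u_n^{r,-}(t;a_1)\le u_n^{r,-}(t;a_2)$. Combining the four inequalities finishes the proof for each $\N+$.

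I do not expect a serious obstacle here; the only point requiring care is the algebraic splitting of the zeroth-order term. One must keep the coefficient $a_2$ attached to the unknown difference $w$ so that the residual factor $\lambda_n(a_2-a_1)$ multiplies a function ($v_1$ or $z_1$) whose sign is already fixed by Lemma \ref{sign_right_+-}, thereby making the forcing term of the difference equation sign-definite. (Keeping $a_1$ instead would work equally well, since then the residual multiplies $v_2$ or $z_2$, whose signs are likewise known.) The hypothesis $\lambda_n a_2\in C^{+}[0,T]$ needed to invoke Lemma \ref{sign} is guaranteed by $a_2\in C^{+}[0,T]$ and $\lambda_n>0$.
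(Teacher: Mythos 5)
Your proof is correct and follows essentially the same route as the paper: subtract the two ODEs, group the zeroth-order term so the residual $\lambda_n(a_2-a_1)$ multiplies a function of known sign (the paper keeps $a_1$ on the difference and lets the residual hit $u_n^{r,+}(t;a_2)$, which is the alternative you yourself note works equally well), and apply Lemma \ref{sign} via Corollary \ref{sign_eigenfunction}. The only difference is the sign convention for $w$; the substance is identical.
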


\begin{proof}
 Pick $\N+,$ $u_n^{r,+}(t;a_1)$ and $u_n^{r,+}(t;a_2)$ satisfy 
 the following system:
 \begin{equation*}
  \begin{cases}
   \D u_n^{r,+}(t;a_1)+\lambda_na_1(t)u_n^{r,+}(t;a_1)=F^{+}_n(t);\\
   \D u_n^{r,+}(t;a_2)+\lambda_na_2(t)u_n^{r,+}(t;a_2)=F^{+}_n(t);\\
   u_n^{r,+}(0;a_1)=u_n^{r,+}(0;a_2)=0,
  \end{cases}
 \end{equation*}
which leads to 
$$\D w+\lambda_na_1(t)w(t)=\lambda_n u_n^{r,+}(t;a_2)(a_2(t)-a_1(t))\ge0,
\ w(0)=0,$$ 
where $w(t)=u_n^{r,+}(t;a_1)-u_n^{r,+}(t;a_2)$ and the 
last inequality follows from Lemma \ref{sign_right_+-} and $a_1\le a_2.$ 
Hence, Corollary \ref{sign_eigenfunction} shows that 
$w(t)\ge 0,$ i.e. $u_n^{r,+}(t;a_2)\le u_n^{r,+}(t;a_1)$ and 
Lemma \ref{sign_right_+-} gives 
$0\le u_n^{r,+}(t;a_2)\le u_n^{r,+}(t;a_1),\ t\in [0,T].$
\par Similarly, we have  
$u_n^{r,-}(t;a_1)\le u_n^{r,-}(t;a_2)\le 0,\ t\in [0,T],$ 
completing the proof.
\end{proof}

\par Assumption \ref{assumption_direct} $(a)$ implies 
there exists constants $q_a, Q_a$ s.t. 
\begin{equation}\label{q_a Q_a}
 0<q_a<a(t)<Q_a\ \text{on}\ [0,T].
\end{equation}
 From Lemma \ref{monotone_right}, we obtain 
\begin{equation}\label{inequality_1}
 |u_n^{r,+}(t;a)|\le |u_n^{r,+}(t;q_a)|,\ 
|u_n^{r,-}(t;a)|\le |u_n^{r,-}(t;q_a)|\ \text{on}\ t\in [0,T],\ \N+,
\end{equation}
where $u_n^{r,+}(t;q_a),u_n^{r,-}(t;q_a)$ are the unique solutions of 
fractional ODEs \eqref{ODE_right_+} and \eqref{ODE_right_-} respectively 
with $a(t)\equiv q_a$ on $[0,T]$.
The next two lemmas concern the regularity of 
$u^{r,+}(x,t;a)$ and $\D u^{r,+}(x,t;a),$ respectively.
\begin{lemma}\label{regularity_u^{r,+}}
$$\|u^{r,+}\|_{L^2(0,T;H^2(\Omega))}
\le C\|F\|_{L^2([0,T]\times\Omega)}.$$
\end{lemma}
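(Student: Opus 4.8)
The plan is to combine the spectral representation \eqref{u^{r,+} u^{r,-}} with the elliptic characterization of the $H^2$-norm, and to reduce the whole estimate to a single scalar convolution bound for the constant-coefficient fractional ODE.

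First I would record the norm equivalence coming from elliptic regularity: for $v\in H^2(\Omega)\cap H_0^1(\Omega)$ one has $\|\L v\|_{L^2(\Omega)}^2=\sum_{n=1}^\infty \lambda_n^2 (v,\phi_n)^2$ (since $-\L\phi_n=\lambda_n\phi_n$ and $\{\phi_n\}$ is orthonormal), and the assumption \eqref{sobolev assumption} guarantees $\|v\|_{H^2(\Omega)}^2\le C\sum_{n=1}^\infty \lambda_n^2 (v,\phi_n)^2$. Applying this to $v=u^{r,+}(\cdot,t;a)$ and integrating in $t$, it suffices to bound $\int_0^T\sum_{n=1}^\infty \lambda_n^2 |u_n^{r,+}(t;a)|^2\,dt$. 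By the monotonicity estimate \eqref{inequality_1} we have $|u_n^{r,+}(t;a)|\le |u_n^{r,+}(t;q_a)|$, so I may freeze the variable coefficient $a(t)$ at the constant $q_a$ and work with \eqref{ODE_right_+} for $a\equiv q_a$.

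Next I would use the explicit Mittag--Leffler (Duhamel) representation of the constant-coefficient solution,
$$u_n^{r,+}(t;q_a)=\int_0^t (t-s)^{\alpha-1}E_{\alpha,\alpha}\bigl(-\lambda_n q_a(t-s)^\alpha\bigr)F_n^{+}(s)\,ds,$$
so that $\lambda_n u_n^{r,+}(t;q_a)=(k_n * F_n^{+})(t)$ with kernel $k_n(\tau)=\lambda_n\tau^{\alpha-1}E_{\alpha,\alpha}(-\lambda_n q_a\tau^\alpha)$. Young's convolution inequality then gives $\|\lambda_n u_n^{r,+}(\cdot;q_a)\|_{L^2(0,T)}\le \|k_n\|_{L^1(0,T)}\,\|F_n^{+}\|_{L^2(0,T)}$, so the entire argument hinges on a bound for $\|k_n\|_{L^1(0,T)}$ that is \emph{uniform in $n$}.

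This uniform $L^1$-bound is the crux, and it is exactly where the special structure of the Mittag--Leffler function is needed. Since $E_{\alpha,\alpha}(-z)\ge0$ for $z>0$ (Lemma \ref{mittag_positive}), the kernel is nonnegative, and Lemma \ref{mittag_derivative} yields the exact primitive $k_n(\tau)=-q_a^{-1}\frac{d}{d\tau}E_{\alpha,1}(-\lambda_n q_a\tau^\alpha)$; integrating gives $\|k_n\|_{L^1(0,T)}=q_a^{-1}\bigl(1-E_{\alpha,1}(-\lambda_n q_a T^\alpha)\bigr)\le q_a^{-1}$, using $0\le E_{\alpha,1}\le 1$ from Lemma \ref{mittag_positive_1}. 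Combining the pieces gives $\int_0^T\lambda_n^2|u_n^{r,+}(t;a)|^2\,dt\le q_a^{-2}\int_0^T|F_n^{+}(t)|^2\,dt$; summing over $n$, using $|F_n^{+}|\le |F_n|$ and Parseval's identity $\sum_{n=1}^\infty\int_0^T|F_n(t)|^2\,dt=\|F\|_{L^2([0,T]\times\Omega)}^2$, yields the claim. The one remaining technical point is to justify that the series converges in $L^2(0,T;H^2(\Omega))$ rather than only formally; this follows from the same uniform estimate, which shows the partial sums $\sum_{n=1}^{N} u_n^{r,+}(t;a)\phi_n(x)$ are Cauchy in $L^2(0,T;H^2(\Omega))$, so their limit is $u^{r,+}$ and the bound passes to the limit. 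I expect everything except the uniform kernel bound to be routine bookkeeping.
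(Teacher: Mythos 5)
Your proposal is correct and follows essentially the same route as the paper's proof: elliptic regularity to reduce to $\sum_n \lambda_n^2|u_n^{r,+}|^2$, the monotonicity bound \eqref{inequality_1} to freeze the coefficient at $q_a$, the explicit Mittag--Leffler convolution representation with Young's inequality, the uniform $L^1$ kernel bound $q_a^{-1}(1-E_{\alpha,1}(-\lambda_n q_a T^\alpha))\le q_a^{-1}$ via Lemmas \ref{mittag_derivative}, \ref{mittag_positive} and \ref{mittag_positive_1}, and Parseval to conclude. The only cosmetic difference is that the paper justifies the interchange of sum and integral by the Monotone Convergence Theorem where you argue via Cauchy partial sums.
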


\begin{proof}
\par Calculating $\|u^{r,+}(x,t;a)\|_{L^2(0,T;H^2(\Omega))}^2$ directly 
yields
\begin{equation*}
\begin{split}
 \|u^{r,+}(x,t;a)\|_{L^2(0,T;H^2(\Omega))}^2
 &=\int_0^T\|u^{r,+}(x,t;a)\|_{H^2(\Omega))}^2 {\rm d}t
 \le \int_0^T C\|(-\L u^{r,+})(x,t;a)\|_{L^2(\Omega)}^2 {\rm d}t\\
 &=C\int_0^T\|\sum\limits_{n=1}^{\infty}\lambda_nu_n^{r,+}(t;a)\phi_n(x)\|
 _{L^2(\Omega)}^2 {\rm d}t\\
 &=C\int_0^T\sum\limits_{n=1}^{\infty}\lambda_n^2 |u_n^{r,+}(t;a)|^2
 {\rm d}t
 \le C\int_0^T\sum\limits_{n=1}^{\infty}\lambda_n^2 
 |u_n^{r,+}(t;q_a)|^2 {\rm d}t,
 \end{split}
\end{equation*}
where the last inequality is obtained from \eqref{inequality_1}. 
By the Monotone Convergence Theorem, we have 
\begin{equation}\label{inequality_5}
 \quad\|u^{r,+}(x,t;a)\|_{L^2(0,T;H^2(\Omega))}^2
 \le C\int_0^T\sum\limits_{n=1}^{\infty}\lambda_n^2 
 |u_n^{r,+}(t;q_a)|^2 {\rm d}t
 =C \sum\limits_{n=1}^{\infty} \int_0^T
 |\lambda_nu_n^{r,+}(t;q_a)|^2 {\rm d}t.
\end{equation}

\par For each $\N+$, \cite{sakamoto2011initial} gives the explicit  
representation of $u_n^{r,+}(t;q_a)$
\begin{equation*}\label{u_n^{r,+}(t;q_a)}
 u_n^{r,+}(t;q_a)=\int_0^t F^+_n(\tau) (t-\tau)^{\alpha-1} 
 E_{\alpha,\alpha}(-\lambda_n q_a(t-\tau)^\alpha){\rm d}\tau,
\end{equation*}
which together with Young's inequality leads to
\begin{equation*}\label{inequality_2}
\begin{split}
 \int_0^T|\lambda_nu_n^{r,+}(t;q_a)|^2 {\rm d}t
 &=\|F_n^{+}(t)*(\lambda_nt^{\alpha-1}
 E_{\alpha,\alpha}(-\lambda_nq_at^\alpha))\|_{L^2[0,T]}^2\\
 &\le \|F_n^{+}\|_{L^2[0,T]}^2 \|\lambda_nt^{\alpha-1}
 E_{\alpha,\alpha}(-\lambda_nq_at^\alpha)\|_{L^1[0,T]}^2.
\end{split} 
\end{equation*}
Lemmas \ref{mittag_derivative}, \ref{mittag_positive} and 
\ref{mittag_positive_1} give the bound of $\|\lambda_nt^{\alpha-1}
 E_{\alpha,\alpha}(-\lambda_nq_at^\alpha)\|_{L^1[0,T]}$
\begin{equation*}
\begin{split}
\|\lambda_nt^{\alpha-1}
 E_{\alpha,\alpha}(-\lambda_nq_at^\alpha)\|_{L^1[0,T]}
&=\int_0^T \big|\lambda_n\tau^{\alpha-1} 
 E_{\alpha,\alpha}(-\lambda_n q_a\tau^\alpha)\big|{\rm d}\tau\\
&=\int_0^T \lambda_n\tau^{\alpha-1} 
 E_{\alpha,\alpha}(-\lambda_n q_a\tau^\alpha){\rm d}\tau\\
&=-q_a^{-1}\int_0^T \frac{d}{d\tau} 
E_{\alpha,1}(-\lambda_nq_a\tau^\alpha){\rm d}\tau\\
&=q_a^{-1}(1-E_{\alpha,1}(-\lambda_nq_aT^\alpha))
\le q_a^{-1};
 \end{split} 
\end{equation*}
while the definition \eqref{+-} provides the bound of 
$\|F_n^{+}\|_{L^2[0,T]}$ as 
$\|F_n^{+}\|_{L^2[0,T]}\le \|F_n\|_{L^2[0,T]}.$
Consequently, it holds $\int_0^T|\lambda_nu_n^{r,+}(t;q_a)|^2 {\rm d}t\le q_a^{-2} 
 \|F_n\|_{L^2[0,T]}^2,\ \N+,$ 
 i.e.  
$$\sum_{n=1}^\infty\int_0^T|\lambda_nu_n^{r,+}(t;q_a)|^2 {\rm d}t
\le q_a^{-2} \sum_{n=1}^\infty\|F_n\|_{L^2[0,T]}^2,$$
which together with \eqref{inequality_5} and the completeness 
of $\{\phi_n(x):\N+\}$ in $L^2(\Omega)$ gives  
\begin{equation*}\label{inequality_6}
\begin{split}
 \|u^{r,+}(x,t;a)\|_{L^2(0,T;H^2(\Omega))}^2
 &\le C\sum\limits_{n=1}^{\infty} \int_0^T
 |\lambda_nu_n^{r,+}(t;q_a)|^2 {\rm d}t\\
 &\le C\sum_{n=1}^\infty\|F_n\|_{L^2[0,T]}^2
 =C\|F\|_{L^2([0,T]\times\Omega)}^2,
 \end{split}
\end{equation*}
where the constant $C$ only depends on $a(t).$ This completes the proof.
\end{proof}

\begin{lemma}\label{regularity_D_u^{r,+}}
 $$
\|\D u^{r,+}\|_{L^2([0,T]\times\Omega)}
\le C\|F\|_{L^2([0,T]\times\Omega)}.
$$
\end{lemma}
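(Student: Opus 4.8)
The plan is to differentiate the series \eqref{u^{r,+} u^{r,-}} term by term and use the fractional ODE \eqref{ODE_right_+} to replace each Caputo derivative $\D u_n^{r,+}(t;a)$ by the algebraic expression $F_n^{+}(t)-\lambda_n a(t) u_n^{r,+}(t;a)$. Concretely, appealing to the weak formulation of Definition \ref{weak solution} with $\psi=\phi_n$, the $n$-th Fourier coefficient of $\D u^{r,+}(\cdot,t;a)$ equals $\D u_n^{r,+}(t;a)$, so that by the orthonormality of $\{\phi_n:\N+\}$ and Parseval's identity
$$\|\D u^{r,+}(\cdot,t;a)\|_{L^2(\Omega)}^2=\sum_{n=1}^{\infty}\big|\D u_n^{r,+}(t;a)\big|^2=\sum_{n=1}^{\infty}\big|F_n^{+}(t)-\lambda_n a(t) u_n^{r,+}(t;a)\big|^2.$$

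Integrating over $t$ and applying the elementary inequality $(p+q)^2\le 2p^2+2q^2$, I would split the estimate into two sums,
$$\|\D u^{r,+}\|_{L^2([0,T]\times\Omega)}^2\le 2\int_0^T\sum_{n=1}^{\infty}|F_n^{+}(t)|^2\,{\rm d}t+2\int_0^T\sum_{n=1}^{\infty}\lambda_n^2 a(t)^2 |u_n^{r,+}(t;a)|^2\,{\rm d}t.$$
For the first sum, the pointwise bound $|F_n^{+}(t)|\le |F_n(t)|$ coming from \eqref{+-} together with the completeness of $\{\phi_n:\N+\}$ gives $\int_0^T\sum_n|F_n^{+}(t)|^2\,{\rm d}t\le \|F\|_{L^2([0,T]\times\Omega)}^2$. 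For the second sum, the uniform bound $a(t)<Q_a$ from \eqref{q_a Q_a} and the comparison \eqref{inequality_1} reduce the estimate to $\int_0^T\sum_n\lambda_n^2|u_n^{r,+}(t;q_a)|^2\,{\rm d}t$, which is exactly the quantity already shown to be bounded by $q_a^{-2}\|F\|_{L^2([0,T]\times\Omega)}^2$ inside the proof of Lemma \ref{regularity_u^{r,+}}. Combining the two bounds yields the claim with a constant $C=C(q_a,Q_a)$ depending only on $a(t)$.

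The step I expect to be delicate is the very first one: justifying the termwise differentiation, i.e. that $\D u^{r,+}(\cdot,t;a)\in L^2(\Omega)$ with Fourier coefficients $\D u_n^{r,+}(t;a)$, so that Parseval's identity may legitimately be applied. This is precisely where the regularity already obtained in Lemma \ref{regularity_u^{r,+}} is needed: it guarantees that $(-\L)u^{r,+}(\cdot,t;a)=\sum_n\lambda_n u_n^{r,+}(t;a)\phi_n$ lies in $L^2(\Omega)$ for a.e. $t$, whence $F^{+}(\cdot,t)-a(t)(-\L)u^{r,+}(\cdot,t;a)$ is a genuine $L^2(\Omega)$ function whose Fourier coefficients coincide with $\D u_n^{r,+}(t;a)$. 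The identity $\D u^{r,+}=F^{+}-a(t)(-\L)u^{r,+}$ then follows from the weak solution property, legitimizing the interchange of $\D$ with the infinite sum. Once this is in place, the remaining estimates are a routine application of \eqref{inequality_1}, \eqref{q_a Q_a} and the bound established within the proof of Lemma \ref{regularity_u^{r,+}}.
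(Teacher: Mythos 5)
Your proposal is correct and follows essentially the same route as the paper: expand $\D u^{r,+}$ in the eigenbasis, substitute $\D u_n^{r,+}=F_n^{+}-\lambda_n a(t)u_n^{r,+}$ from \eqref{ODE_right_+}, split with $(p+q)^2\le 2p^2+2q^2$, and bound the two resulting sums via $|F_n^{+}|\le|F_n|$ and via \eqref{q_a Q_a}, \eqref{inequality_1} together with the estimate $\sum_n\int_0^T|\lambda_n u_n^{r,+}(t;q_a)|^2\,{\rm d}t\le q_a^{-2}\|F\|^2_{L^2([0,T]\times\Omega)}$ already obtained in the proof of Lemma \ref{regularity_u^{r,+}}. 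Your extra paragraph justifying the termwise identification of Fourier coefficients is a reasonable elaboration of what the paper compresses into an appeal to the Monotone Convergence Theorem.
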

\begin{proof}
\par \eqref{ODE_right_+}, \eqref{u^{r,+} u^{r,-}}, 
definition \eqref{+-} and the Monotone Convergence Theorem give  
\begin{equation}\label{inequality_17}
 \begin{split}
 \|\D u^{r,+}\|_{L^2([0,T]\times\Omega)}^2
  &=\int_0^T \|\sum_{n=1}^{\infty} {\D} u_n^{r,+}(\cdot;a) \phi_n(x)\|
  _{L^2(\Omega)}^2{\rm d}t
   =\sum_{n=1}^{\infty} \int_0^T |\D u_n^{r,+}(\cdot;a)|^2 {\rm d}t\\
 & \le \sum_{n=1}^{\infty} \int_0^T \left(2|\lambda_n a(t) u_n^{r,+}(t;a)|^2
 +2|F_n^{+}(t)|^2\right){\rm d}t\\
 &\le2\sum_{n=1}^{\infty} \int_0^T |\lambda_n a(t) u_n^{r,+}(t;a)|^2
 {\rm d}t +2\sum_{n=1}^{\infty} \int_0^T |F_n(t)|^2{\rm d}t.
 \end{split}
\end{equation}
The estimate of $\sum\limits_{n=1}^{\infty} \int_0^T 
|\lambda_n a(t) u_n^{r,+}(t;a)|^2{\rm d}t$ follows from 
\eqref{q_a Q_a}, \eqref{inequality_1} and the proof of Lemma 
\ref{regularity_u^{r,+}}
\begin{equation*}\label{inequality_8}
 \sum_{n=1}^{\infty} \int_0^T|\lambda_n a(t) u_n^{r,+}(t;a)|^2{\rm d}t
 \le Q_a \sum_{n=1}^{\infty} \int_0^T
 |\lambda_n u_n^{r,+}(t;q_a)|^2{\rm d}t
 \le C\|F\|_{L^2([0,T]\times\Omega)}^2;
\end{equation*}
while the completeness of $\{\phi_n(x):\N+\}$ gives 
$ \sum\limits_{n=1}^{\infty} \int_0^T |F_n(t)|^2{\rm d}t
 =\|F\|_{L^2([0,T]\times\Omega)}^2.$
Hence, \eqref{inequality_17} develops 
$\|\D u^{r,+}\|_{L^2([0,T]\times\Omega)}^2
\le C\|F\|_{L^2([0,T]\times\Omega)}^2,$ 
which implies the indicated conclusion.
\end{proof}

\par The following corollary follows immediately from the proofs of 
Lemmas \ref{regularity_u^{r,+}} and \ref{regularity_D_u^{r,+}}.  

\begin{corollary}\label{regularity_u^{r,-}}
 \begin{equation*}
\|u^{r,-}\|_{L^2(0,T;H^2(\Omega))}
\le C\|F\|_{L^2([0,T]\times\Omega)},
 \  \|\D u^{r,-}\|_{L^2([0,T]\times\Omega)}
\le C\|F\|_{L^2([0,T]\times\Omega)}.
 \end{equation*}
\end{corollary}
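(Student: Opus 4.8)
The plan is to repeat, almost verbatim and with signs reversed, the arguments of Lemmas \ref{regularity_u^{r,+}} and \ref{regularity_D_u^{r,+}}, because the negative parts $u_n^{r,-}(t;a)$ enjoy exactly the structural properties those proofs exploited for $u_n^{r,+}(t;a)$. Concretely, $u_n^{r,-}(t;a)$ solves the fractional ODE \eqref{ODE_right_-}, which has the same form as \eqref{ODE_right_+} with $F_n^{+}$ replaced by $F_n^{-}$; Lemma \ref{sign_right_+-} supplies the sign $u_n^{r,-}(t;a)\le 0$, Lemma \ref{monotone_right} supplies the monotonicity $u_n^{r,-}(t;a_1)\le u_n^{r,-}(t;a_2)\le 0$ whenever $a_1\le a_2$, and hence the pointwise comparison $|u_n^{r,-}(t;a)|\le |u_n^{r,-}(t;q_a)|$ already recorded in \eqref{inequality_1}. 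These are the only inputs used in the two reference proofs.

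First I would establish the $H^2$-bound. Starting from the elliptic regularity estimate $\|v\|_{H^2(\Omega)}\le C\|\L v\|_{L^2(\Omega)}$ together with the spectral expansion $-\L u^{r,-}=\sum_{n}\lambda_n u_n^{r,-}\phi_n$, the Monotone Convergence Theorem and \eqref{inequality_1} reduce matters to bounding $\sum_{n}\int_0^T|\lambda_n u_n^{r,-}(t;q_a)|^2\,{\rm d}t$, exactly as in \eqref{inequality_5}. Invoking the explicit representation $u_n^{r,-}(t;q_a)=\int_0^t F_n^{-}(\tau)(t-\tau)^{\alpha-1}E_{\alpha,\alpha}(-\lambda_n q_a(t-\tau)^\alpha)\,{\rm d}\tau$ from \cite{sakamoto2011initial}, Young's convolution inequality together with the $L^1[0,T]$-bound $\|\lambda_n t^{\alpha-1}E_{\alpha,\alpha}(-\lambda_n q_a t^\alpha)\|_{L^1[0,T]}\le q_a^{-1}$ (from Lemmas \ref{mittag_derivative}, \ref{mittag_positive} and \ref{mittag_positive_1}) yields $\int_0^T|\lambda_n u_n^{r,-}(t;q_a)|^2\,{\rm d}t\le q_a^{-2}\|F_n^{-}\|_{L^2[0,T]}^2$. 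Since the supports of $F_n^{+}$ and $F_n^{-}$ are disjoint, $\|F_n^{-}\|_{L^2[0,T]}\le\|F_n\|_{L^2[0,T]}$, and summing over $n$ with the completeness of $\{\phi_n\}$ gives the first inequality.

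For the $\D u^{r,-}$-bound I would mirror Lemma \ref{regularity_D_u^{r,+}}: from \eqref{ODE_right_-} write $\D u_n^{r,-}=F_n^{-}-\lambda_n a(t) u_n^{r,-}$, so that $|\D u_n^{r,-}|^2\le 2|\lambda_n a(t) u_n^{r,-}|^2+2|F_n^{-}|^2$; integrating in $t$, summing in $n$, and using \eqref{q_a Q_a}, the comparison \eqref{inequality_1}, the $H^2$-estimate just obtained, and $\|F_n^{-}\|_{L^2[0,T]}\le\|F_n\|_{L^2[0,T]}$ closes the argument. I do not expect any genuine obstacle here; the single point requiring care—and the reason the statement reads \emph{follows immediately}—is the sign bookkeeping. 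Because $u_n^{r,-}\le 0$ one has $|u_n^{r,-}|=-u_n^{r,-}$, yet the Mittag--Leffler kernel $\lambda_n t^{\alpha-1}E_{\alpha,\alpha}(-\lambda_n q_a t^\alpha)$ is nonnegative by Lemma \ref{mittag_positive} and its $L^1$-norm is computed exactly as before, so every estimate transfers unchanged. The only thing to verify is that neither the monotonicity comparison of Lemma \ref{monotone_right} nor the disjoint-support bound used the sign of the source in an essential way, and both hold equally for $F_n^{-}$.
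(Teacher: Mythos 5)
Your proposal is correct and matches the paper's intent exactly: the paper gives no separate proof, stating only that the corollary ``follows immediately from the proofs of Lemmas \ref{regularity_u^{r,+}} and \ref{regularity_D_u^{r,+}},'' i.e.\ by replacing $F_n^{+}$ with $F_n^{-}$ and using the same sign/monotonicity inputs \eqref{inequality_1} and $\|F_n^{-}\|_{L^2[0,T]}\le\|F_n\|_{L^2[0,T]}$, which is precisely what you carry out.
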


\par From Lemmas \ref{regularity_u^{r,+}}, \ref{regularity_D_u^{r,+}}, 
Corollary \ref{regularity_u^{r,-}} and \eqref{u^r u^{r,+} u^{r,-}}, 
we are able to deduce the regularity for $u^r(x,t;a)$ and 
$\D u^r(x,t;a)$.

\begin{lemma}[Regularity of $u^r$]\label{regularity_u^r}
 $$\|u^r\|_{L^2(0,T;H^2(\Omega))}
  +\|\D u^r\|_{L^2([0,T]\times\Omega)}
  \le C\|F\|_{L^2([0,T]\times\Omega)}.$$
\end{lemma}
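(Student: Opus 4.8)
The plan is to obtain this estimate as an immediate consequence of the splitting \eqref{u^r u^{r,+} u^{r,-}} together with the bounds already established for the positive and negative parts. Since $u^r(x,t;a)=u^{r,+}(x,t;a)+u^{r,-}(x,t;a)$ and both $\|\cdot\|_{L^2(0,T;H^2(\Omega))}$ and $\|\cdot\|_{L^2([0,T]\times\Omega)}$ are genuine norms, the whole argument reduces to two applications of the triangle inequality, one in each norm.

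First I would bound the spatial term. By the triangle inequality,
\[
\|u^r\|_{L^2(0,T;H^2(\Omega))}
\le \|u^{r,+}\|_{L^2(0,T;H^2(\Omega))}
+\|u^{r,-}\|_{L^2(0,T;H^2(\Omega))},
\]
and the two summands are each controlled by $C\|F\|_{L^2([0,T]\times\Omega)}$ through Lemma \ref{regularity_u^{r,+}} and Corollary \ref{regularity_u^{r,-}}, respectively. For the fractional-derivative term I would use the linearity of $\D$ to write $\D u^r=\D u^{r,+}+\D u^{r,-}$, and then again apply the triangle inequality
\[
\|\D u^r\|_{L^2([0,T]\times\Omega)}
\le \|\D u^{r,+}\|_{L^2([0,T]\times\Omega)}
+\|\D u^{r,-}\|_{L^2([0,T]\times\Omega)},
\]
where both terms are bounded by $C\|F\|_{L^2([0,T]\times\Omega)}$ via Lemma \ref{regularity_D_u^{r,+}} and Corollary \ref{regularity_u^{r,-}}.

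Adding the two displayed inequalities and absorbing the finitely many constants into a single $C$ (depending only on $a(t)$, as in the proof of Lemma \ref{regularity_u^{r,+}}) then yields the claim. I do not expect any substantial obstacle here: the estimate is essentially a bookkeeping combination of earlier results. The one point that deserves a word is the termwise application of $\D$ to the series \eqref{u^{r,+} u^{r,-}}, but this is already built into the representation exploited in the proof of Lemma \ref{regularity_D_u^{r,+}}, so no fresh convergence argument is needed.
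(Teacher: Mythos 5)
Your proposal is correct and follows exactly the paper's own argument: decompose $u^r=u^{r,+}+u^{r,-}$ via \eqref{u^r u^{r,+} u^{r,-}}, apply the triangle inequality in each norm, and invoke Lemmas \ref{regularity_u^{r,+}}, \ref{regularity_D_u^{r,+}} and Corollary \ref{regularity_u^{r,-}}. Nothing further is needed.
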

\begin{proof}
 \eqref{u^r u^{r,+} u^{r,-}} gives 
 $u^r(x,t;a)=u^{r,+}(x,t;a)+u^{r,-}(x,t;a),$ which leads to
\begin{equation*}
 \begin{split}
  &\|u^r\|_{L^2(0,T;H^2(\Omega))}
  +\|\D u^r\|_{L^2([0,T]\times\Omega)}\\
  \le&\   \|u^{r,+}\|_{L^2(0,T;H^2(\Omega))}
 +\|u^{r,-}\|_{L^2(0,T;H^2(\Omega))}\\
 &+\|\D u^{r,+}\|_{L^2([0,T]\times\Omega)}
 +\|\D u^{r,-}\|_{L^2([0,T]\times\Omega)}\\
 \le &\ C\|F\|_{L^2([0,T]\times\Omega)}.
 \end{split}
\end{equation*}
\end{proof}

\par If we impose a higher regularity on $F,$ we can obtain the regularity estimate of $\|u^{r}\|_{C([0,T];H^2(\Omega))}$. 
\begin{corollary}\label{Cregularity_u^r}
Under Assumption \ref{assumption_direct}, if $F\in C^\theta([0,T];L^2(\Omega)),\ 0<\theta<1,$ then  $$\|u^{r}\|_{C([0,T];H^2(\Omega))}
+\|\D u^r\|_{C([0,T];L^2(\Omega))}\le C  \|F\|_{C^\theta([0,T];L^2(\Omega))},$$ where $C$ depends on $\Omega$, $-\L$ and $a(t).$ 
\end{corollary}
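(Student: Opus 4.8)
The plan is to promote the $L^2(0,T;\cdot)$ estimate of Lemma~\ref{regularity_u^r} to a uniform-in-time estimate, trading the extra Hölder regularity of $F$ for continuity in $t$, and the engine is once more the comparison with the constant coefficient $q_a$ from \eqref{q_a Q_a}. Fixing $t\in[0,T]$, it suffices to bound $\|u^r(\cdot,t;a)\|_{H^2(\Omega)}^2\simeq\sum_{n}\lambda_n^2|u_n^r(t;a)|^2$ uniformly in $t$. By the splitting \eqref{u^r u^{r,+} u^{r,-}} and the pointwise comparison \eqref{inequality_1}, namely $|u_n^{r,\pm}(t;a)|\le|u_n^{r,\pm}(t;q_a)|$, this reduces to estimating $\sum_n\lambda_n^2|u_n^{r,\pm}(t;q_a)|^2$, where the explicit Mittag--Leffler representation of $u_n^{r,+}(t;q_a)$ (and likewise of $u_n^{r,-}$) is at our disposal.

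I would run the classical \emph{subtraction trick}. In $u_n^{r,+}(t;q_a)=\int_0^t F_n^+(\tau)(t-\tau)^{\alpha-1}E_{\alpha,\alpha}(-\lambda_n q_a(t-\tau)^\alpha)\,d\tau$, write $F_n^+(\tau)=\bigl(F_n^+(\tau)-F_n^+(t)\bigr)+F_n^+(t)$ and split accordingly into $A_n(t)$ and $B_n(t)$. The constant part integrates explicitly by Lemma~\ref{mittag_derivative} to $B_n(t)=\frac{F_n^+(t)}{\lambda_n q_a}\bigl(1-E_{\alpha,1}(-\lambda_n q_a t^\alpha)\bigr)$, so $\lambda_n|B_n(t)|\le q_a^{-1}|F_n^+(t)|\le q_a^{-1}|F_n(t)|$ by Lemma~\ref{mittag_positive_1}, and summing yields $\sum_n\lambda_n^2|B_n|^2\le q_a^{-2}\|F(\cdot,t)\|_{L^2(\Omega)}^2$. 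Crucially, $F_n^+$ inherits the Hölder control of $F$: since $|F_n^+(\tau)-F_n^+(t)|\le|F_n(\tau)-F_n(t)|$, Parseval gives $\sum_n|F_n^+(\tau)-F_n^+(t)|^2\le\|F(\cdot,\tau)-F(\cdot,t)\|_{L^2(\Omega)}^2\le [F]_{C^\theta}^2|t-\tau|^{2\theta}$.

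The estimate of the difference part $A_n$ is where the work lies. I would apply the Cauchy--Schwarz inequality to $A_n$ against the nonnegative kernel $K_n(t,\tau):=(t-\tau)^{\alpha-1}E_{\alpha,\alpha}(-\lambda_n q_a(t-\tau)^\alpha)$, using $\int_0^t K_n\,d\tau\le(q_a\lambda_n)^{-1}$ (the very computation of Lemma~\ref{regularity_u^{r,+}}) to absorb one power of $\lambda_n$. After summing over $n$ and exchanging sum and integral (Tonelli), I would bound $\lambda_n K_n(t,\tau)\le Cq_a^{-1}(t-\tau)^{-1}$ \emph{uniformly in} $n$---this comes from Lemma~\ref{mittag_bound} together with $x/(1+x)\le1$---and insert the aggregate Hölder bound above, which converts the borderline singularity $(t-\tau)^{-1}$ into the integrable $(t-\tau)^{2\theta-1}$. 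This produces $\sum_n\lambda_n^2|A_n|^2\le C[F]_{C^\theta}^2T^{2\theta}/(2\theta)$. The main obstacle is precisely here: only the \emph{aggregate} ($\ell^2$-in-$n$) Hölder bound is available, not a per-mode one, so the two-step Cauchy--Schwarz is indispensable---a naive bound would leave either a non-integrable $(t-\tau)^{-1}$ or a non-summable power of $\lambda_n$. The symmetric argument for $u_n^{r,-}$ then gives the uniform bound $\sup_{t}\|u^r(\cdot,t;a)\|_{H^2(\Omega)}\le C\|F\|_{C^\theta([0,T];L^2(\Omega))}$.

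It remains to establish continuity in $t$ and to handle $\D u^r$. For $u^r\in C([0,T];H^2(\Omega))$ I would combine the continuity of each mode $u_n^r(\cdot;a)\in C[0,T]$ (Theorem~\ref{existenceuniqueness}) with a uniform tail estimate $\sup_t\sum_{n>N}\lambda_n^2|u_n^r(t;a)|^2\to0$ as $N\to\infty$; the tail is controlled by re-running the estimates above over $n>N$ and splitting the $\tau$-integral at $t-\delta$---using the Hölder bound on $(t-\delta,t)$ and, on $(0,t-\delta)$, the uniform smallness of $\sup_t\sum_{n>N}|F_n(t)|^2$, which holds because $\{F(\cdot,t):t\in[0,T]\}$ is compact in $L^2(\Omega)$. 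Finally, $\D u^r$ is read off the equation, $\D u^r=-a(t)(-\L)u^r+F$: since $-\L$ is bounded from $H^2(\Omega)\cap H_0^1(\Omega)$ into $L^2(\Omega)$, $u^r\in C([0,T];H^2(\Omega))$, $a\in C[0,T]$ and $F\in C([0,T];L^2(\Omega))$, the right-hand side belongs to $C([0,T];L^2(\Omega))$ with $\|\D u^r\|_{C([0,T];L^2(\Omega))}\le C\|F\|_{C^\theta([0,T];L^2(\Omega))}$, and the dependence of $C$ on $\Omega$, $-\L$ and $a(t)$ is exactly as claimed.
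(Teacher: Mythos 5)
Your proof is correct and follows the same skeleton as the paper's: both reduce to the constant coefficient $q_a$ via the comparison \eqref{inequality_1}, both apply the subtraction trick $F_n^+(\tau)=\bigl(F_n^+(\tau)-F_n^+(t)\bigr)+F_n^+(t)$ together with $|F_n^+(\tau)-F_n^+(t)|\le|F_n(\tau)-F_n(t)|$ and the explicit value $q_a^{-1}\bigl(1-E_{\alpha,1}(-\lambda_nq_at^\alpha)\bigr)$ of the kernel integral, and both recover the bound for $\D u^{r,\pm}$ from the mode equations \eqref{ODE_right_+}--\eqref{ODE_right_-}. The one substantive divergence is the difference term: the paper disposes of it by citing \cite[Lemma 3.4]{sakamoto2011initial}, whereas you prove the needed bound from scratch by Cauchy--Schwarz against the measure $(t-\tau)^{\alpha-1}E_{\alpha,\alpha}(-\lambda_nq_a(t-\tau)^\alpha)\,d\tau$, the uniform-in-$n$ bound $\lambda_n(t-\tau)^{\alpha-1}E_{\alpha,\alpha}(-\lambda_nq_a(t-\tau)^\alpha)\le Cq_a^{-1}(t-\tau)^{-1}$ from Lemma \ref{mittag_bound}, and the aggregate Parseval--H\"older estimate $\sum_n|F_n^+(\tau)-F_n^+(t)|^2\le [F]_{C^\theta}^2|t-\tau|^{2\theta}$, which turns the borderline singularity into the integrable $(t-\tau)^{2\theta-1}$; this is in substance a self-contained re-derivation of the cited lemma and is valid. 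You also supply two details the paper leaves implicit: the actual continuity of $t\mapsto u^r(\cdot,t;a)$ with values in $H^2(\Omega)$ (via uniform tail estimates, using compactness of $\{F(\cdot,t):t\in[0,T]\}$ in $L^2(\Omega)$), and the derivation of the $\D u^r$ bound directly from the equation rather than mode by mode. Both routes buy the same estimate with the same constant dependence; yours is longer but independent of the external reference and more careful about the passage from a pointwise-in-$t$ bound to membership in $C([0,T];H^2(\Omega))$.
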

\begin{proof}
For each $t\in[0,T],$ we have 
\begin{equation*}
\begin{split}
\|u^{r,+}(x,t;a)\|^2_{H^2(\Omega)}
&\le C \|-\L u^{r,+}\|^2_{L^2(\Omega)}
\le C \sum_{n=1}^\infty
|\lambda_nu^{r,+}_n(t;a)|^2\\
&\le C \sum_{n=1}^\infty
\left|\lambda_n \int_0^t F^+_n(\tau) (t-\tau)^{\alpha-1} 
E_{\alpha,\alpha}(-\lambda_n q_a(t-\tau)^\alpha){\rm d}\tau \right|^2\\
&\le C \sum_{n=1}^\infty
\left|\lambda_n \int_0^t |F^+_n(\tau) -F^+_n(t)|(t-\tau)^{\alpha-1} 
E_{\alpha,\alpha}(-\lambda_n q_a(t-\tau)^\alpha){\rm d}\tau\right|^2\\
&\quad +C \sum_{n=1}^\infty 
\left|F_n^+(t)\int_0^t \lambda_n(t-\tau)^{\alpha-1} 
E_{\alpha,\alpha}(-\lambda_n q_a(t-\tau)^\alpha){\rm d}\tau\right|^2.
\end{split}
\end{equation*}
The definition of $F_n^+(t)$ yields that  
$|F^+_n(\tau) -F^+_n(t)|\le |F_n(\tau) -F_n(t)|$;
Lemma \ref{mittag_derivative} gives 
$$0<\int_0^t \lambda_n(t-\tau)^{\alpha-1} 
E_{\alpha,\alpha}(-\lambda_n q_a(t-\tau)^\alpha){\rm d}\tau=
q_a^{-1}(1-E_{\alpha,1}(-\lambda_n q_a t^\alpha))<q_a^{-1}.$$
Hence, 
\begin{equation*}
\begin{split}
\|u^{r,+}(x,t;a)\|^2_{H^2(\Omega)}
&\le C \sum_{n=1}^\infty
\left|\lambda_n \int_0^t |F_n(\tau) -F_n(t)|(t-\tau)^{\alpha-1} 
E_{\alpha,\alpha}(-\lambda_n q_a(t-\tau)^\alpha){\rm d}\tau\right|^2\\
&\quad +C \sum_{n=1}^\infty 
\left|F_n(t)\right|^2.
\end{split}
\end{equation*}
By \cite[Lemma 3.4]{sakamoto2011initial}, we have 
$$\|u^{r,+}(x,t;a)\|^2_{H^2(\Omega)}
\le C \|F\|_{C^\theta([0,T];L^2(\Omega))}^2+C\| F(\cdot,t)\|_{L^2(\Omega)}^2,
\ t\in[0,T],
$$
which gives 
$$\|u^{r,+}\|_{C([0,T];H^2(\Omega))}\le C \|F\|_{C^\theta([0,T];L^2(\Omega))},$$
and the constant $C$ depends on $\Omega$, $-\L$ and $a(t).$ Similarly, we can show\\ $\|u^{r,-}\|_{C([0,T];H^2(\Omega))}\le C \|F\|_{C^\theta([0,T];L^2(\Omega))}.$

\par For $\D u^r,$ by \eqref{ODE_right}, we have 
$\D u^{r,+}=\sum_{n=1}^\infty [-\lambda_n a(t) u_n^{r,+}(t;a)+F_n^+(t)]\phi_n(x).$ 
Then for each $t\in[0,T],$
\begin{equation*}
\begin{split}
\|\D u^{r,+} \|^2_{L^2(\Omega)}&\le 
C\sum_{n=1}^\infty Q_a^2|\lambda_n  u_n^{r,+}(t;a)|^2+C\sum_{n=1}^\infty |F_n(t)|^2 \\
&\le C\sum_{n=1}^\infty |\lambda_n  u_n^{r,+}(t;a)|^2+C\|F(\cdot,t)\|_{L^2(\Omega)}^2.
\end{split}
\end{equation*}
From the above proof for $\|u^{r,+}\|^2_{H^2(\Omega)},$ it holds 
$$\|\D u^{r,+} \|^2_{L^2(\Omega)}
\le C \|F\|^2_{C^\theta([0,T];L^2(\Omega))}+C\|F(\cdot,t)\|_{L^2(\Omega)}^2,\ t\in[0,T],$$ 
which gives 
$$\|\D u^{r,+}\|_{C([0,T];L^2(\Omega))}
\le C \|F\|_{C^\theta([0,T];L^2(\Omega))}.$$
Analogously, we can show 
$\|\D u^{r,-}\|_{C([0,T];L^2(\Omega))}
\le C \|F\|_{C^\theta([0,T];L^2(\Omega))}.$

\par The estimates of $u^{r,+},\ u^{r,-},\ \D u^{r,+}$ and $\D u^{r,-}$ 
yield the desired result and complete this proof.
\end{proof}

\subsubsection{Regularity of $u^i$}
\par In this part we consider the regularity of $u^i.$ Just as in  
the regularity results for $u^r,$ we first state two lemmas which 
concern the positivity and monotonicity of $u^i,$ respectively.
\begin{lemma}\label{sign_u^i}
 With the representation \eqref{u^r u^i} and the fractional ODE
 \eqref{ODE_initial}, for each $\N+,$ $b_n \le (\ge)0$ implies that 
 $u^i_n(t;a)\le (\ge)0$ on $[0,T]$.
\end{lemma}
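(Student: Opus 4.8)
The plan is to recognize Lemma \ref{sign_u^i} as a direct instance of Corollary \ref{sign_eigenfunction}. The key observation is that the fractional ODE \eqref{ODE_initial} governing $u_n^i(t;a)$ is homogeneous, so its right-hand side is $0$, and $0$ satisfies both $0\le 0$ and $0\ge 0$. Thus the sign condition on the differential expression in Corollary \ref{sign_eigenfunction} holds automatically in either branch, and the sign of $u_n^i$ will be dictated solely by the sign of the initial value $b_n$. In this respect the proof is exactly parallel to that of Lemma \ref{sign_right_+-}, the only difference being that there the sign came from $F_n^{\pm}$ while here it comes from $b_n$.

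Concretely, I would fix $\N+$ and treat the two cases in parallel. In the case $b_n\le 0$, I would note that $u_n^i(t;a)$ solves $\D u_n^i(t;a)+\lambda_n a(t)u_n^i(t;a)=0\le 0$ on $[0,T]$ with $u_n^i(0;a)=b_n\le 0$; since Theorem \ref{existenceuniqueness} guarantees $u_n^i(\cdot;a),\ \D u_n^i(\cdot;a)\in C[0,T]$ and Assumption \ref{assumption_direct}(a) gives $\lambda_n a\in C^{+}[0,T]$, the hypotheses of Corollary \ref{sign_eigenfunction} (ultimately Lemma \ref{sign}) are met, and we conclude $u_n^i(t;a)\le 0$ on $[0,T]$. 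The case $b_n\ge 0$ follows from the ``$\ge$'' branch of the same corollary, or equivalently by applying the ``$\le$'' case to $\overline{u}=-u_n^i$, which solves \eqref{ODE_initial} with initial value $-b_n\le 0$.

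I do not anticipate a genuine obstacle: the statement is a corollary of the sign principle already in hand. The only point to verify with care is that the regularity prerequisites for invoking Corollary \ref{sign_eigenfunction}---namely the continuity of $u_n^i$ and $\D u_n^i$ on $[0,T]$ together with the positivity of $\lambda_n a(t)$---are genuinely supplied by Theorem \ref{existenceuniqueness} and Assumption \ref{assumption_direct}, after which the conclusion is immediate and uniform in $\N+$.
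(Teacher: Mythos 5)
Your proposal is correct and matches the paper's argument exactly: the paper also proves this lemma by noting it is a direct consequence of Corollary \ref{sign_eigenfunction}, applied to the homogeneous ODE \eqref{ODE_initial} whose right-hand side $0$ satisfies both sign conditions. Your version simply spells out the verification of the hypotheses in more detail.
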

\begin{proof}
 This is a directly result of Corollary \ref{sign_eigenfunction}.
\end{proof}

\begin{lemma}\label{monotone_initial}
 Given $a_1, a_2\in C^{+}[0,T]$ with $a_1\le a_2$ on $[0,T]$, 
 for each $\N+,$ we have 
 \begin{equation*}
  \begin{cases}
   0\le u_n^i(t;a_2)\le u_n^i(t;a_1),\ \text{if}\ b_n\ge 0;\\
   u_n^i(t;a_1)\le u_n^i(t;a_2) \le 0,\ \text{if}\ b_n\le 0.
  \end{cases}
 \end{equation*}
\end{lemma}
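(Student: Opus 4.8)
The plan is to mirror the proof of Lemma \ref{monotone_right} verbatim, since the fractional ODE \eqref{ODE_initial} satisfied by $u_n^i$ differs from \eqref{ODE_right_+} only in that the forcing term is replaced by $0$ and the homogeneous initial datum is replaced by $b_n$. Accordingly, the positivity input that was supplied by Lemma \ref{sign_right_+-} will now come from Lemma \ref{sign_u^i}, while the comparison engine is again Corollary \ref{sign_eigenfunction}. I would treat the two sign regimes of $b_n$ separately, the second being a trivial reflection of the first.

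First I would handle the case $b_n\ge 0$. Fix $\N+$ and set $w(t)=u_n^i(t;a_1)-u_n^i(t;a_2)$. Subtracting the two instances of \eqref{ODE_initial} and using $\D u_n^i(t;a_j)=-\lambda_n a_j(t)u_n^i(t;a_j)$ to eliminate the fractional derivatives, I would rewrite the difference, anchored to the smaller coefficient $a_1$, as
$$\D w+\lambda_n a_1(t)w(t)=\lambda_n u_n^i(t;a_2)\bigl(a_2(t)-a_1(t)\bigr),\quad w(0)=0.$$
By Lemma \ref{sign_u^i}, $b_n\ge 0$ forces $u_n^i(t;a_2)\ge 0$, and $a_2-a_1\ge 0$ by hypothesis, so the right-hand side is nonnegative. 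Since $\lambda_n a_1\in C^{+}[0,T]$ by Assumption \ref{assumption_direct}, the ``$\ge$'' branch of Corollary \ref{sign_eigenfunction} then yields $w\ge 0$, i.e. $u_n^i(t;a_2)\le u_n^i(t;a_1)$; combining this with $u_n^i(t;a_2)\ge 0$ from Lemma \ref{sign_u^i} gives the first line $0\le u_n^i(t;a_2)\le u_n^i(t;a_1)$.

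Then I would dispatch the case $b_n\le 0$ symmetrically, most economically by setting $\overline{u}_n^i=-u_n^i$, which solves the same ODE \eqref{ODE_initial} with initial datum $-b_n\ge 0$, and quoting the first case. I do not expect any genuine obstacle here, as the argument is a direct transcription of Lemma \ref{monotone_right}. The only point demanding care is the sign bookkeeping when forming $w$: one must anchor the comparison ODE to $a_1$ rather than $a_2$, so that the leftover forcing carries the factor $u_n^i(\cdot;a_2)$ whose sign is controlled by Lemma \ref{sign_u^i} together with the hypothesis $a_2-a_1\ge 0$. Anchoring instead to $a_2$ would produce the factor $a_1-a_2\le 0$ and reverse the required inequality, so matching the convention already fixed in Lemma \ref{monotone_right} keeps the two proofs parallel and avoids a sign error.
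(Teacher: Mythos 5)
Your proposal is correct and follows essentially the same route as the paper: both form $w=u_n^i(\cdot;a_1)-u_n^i(\cdot;a_2)$, anchor the comparison ODE to $a_1$ so the forcing is $\lambda_n u_n^i(\cdot;a_2)(a_2-a_1)\ge 0$, and conclude via Lemma \ref{sign_u^i} and Corollary \ref{sign_eigenfunction}. The only cosmetic difference is that you dispatch the $b_n\le 0$ case by the reflection $-u_n^i$, where the paper simply repeats the symmetric argument.
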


\begin{proof}
 \par Fix $\N+,$ from the fractional ODE \eqref{ODE_initial}, 
 the functions $u^i_n(t;a_1)$ and $u^i_n(t;a_2)$ satisfy the following system
 \begin{equation*}
  \begin{cases}
   \D u_n^i(t;a_1)+\lambda_na_1(t)u_n^i(t;a_1)=0;\\
   \D u_n^i(t;a_2)+\lambda_na_2(t)u_n^i(t;a_2)=0;\\
   u_n^i(0;a_1)=u_n^i(0;a_2)=b_n.
  \end{cases}
 \end{equation*}
This gives 
 \begin{equation}\label{equality_2}
 \D w+\lambda_na_1(t)w(t)=\lambda_nu_n^i(t;a_2)(a_2(t)-a_1(t)),
 \ w(0)=0,
 \end{equation} 
 where $w(t)=u_n^i(t;a_1)-u_n^i(t;a_2)$.
 \par If $b_n\ge 0,$ Corollary \ref{sign_eigenfunction} 
 shows that $u_n^i(t;a_1),u_n^i(t;a_2)\ge 0.$  
 Also, Lemma \ref{sign_u^i} and 
 $a_1\le a_2$ ensures the right side of \eqref{equality_2} is nonnegative, 
 which together with Corollary \ref{sign_eigenfunction} implies $w\ge 0$,
 i.e. $0\le u_n^i(t;a_2)\le u_n^i(t;a_1).$
  The similar argument yields $u_n^i(t;a_1)\le u_n^i(t;a_2) \le 0$ 
 for the case $b_n\le 0.$
\end{proof}

\begin{lemma}[Regularity for $u^i$]\label{regularity_u^i}
 $$\|u^i\|_{L^2(0,T;H^2(\Omega))}+\|\D u^i\|_{L^2([0,T]\times\Omega)}
\le C T^{\frac{1-\alpha}{2}}\|u_0\|_{H^1(\Omega)}.$$
\end{lemma}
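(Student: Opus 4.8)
The plan is to mirror the argument used for $u^r$: reduce the variable-coefficient solution to the constant-coefficient one $a\equiv q_a$ by monotonicity, and then exploit the explicit Mittag–Leffler form of the resulting scalar solutions together with the decay estimate of Lemma \ref{mittag_bound}. Since $q_a\le a(t)$ on $[0,T]$ by \eqref{q_a Q_a}, applying Lemma \ref{monotone_initial} with $a_1=q_a$ and $a_2=a$ (splitting according to the sign of $b_n$) yields the pointwise comparison $|u_n^i(t;a)|\le |u_n^i(t;q_a)|$ for every $\N+$ and $t\in[0,T]$. For the constant coefficient the fractional ODE \eqref{ODE_initial} is solved explicitly by $u_n^i(t;q_a)=b_n E_{\alpha,1}(-\lambda_n q_a t^\alpha)$. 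Finally, because $u_0\in H_0^1(\Omega)$ and $\{\phi_n\}$ diagonalizes $-\L$, I would use $\sum_{n=1}^\infty \lambda_n b_n^2=(-\L u_0,u_0)\le C\|u_0\|_{H^1(\Omega)}^2$, so that everything can be expressed through the series $\sum_n \lambda_n b_n^2$.

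Next I would treat the two norms separately and reduce each to one scalar integral. For the $L^2(0,T;H^2(\Omega))$ term I would invoke elliptic regularity exactly as in the proof of Lemma \ref{regularity_u^{r,+}}, bounding $\|u^i(\cdot,t;a)\|_{H^2(\Omega)}^2\le C\|-\L u^i(\cdot,t;a)\|_{L^2(\Omega)}^2=C\sum_n\lambda_n^2|u_n^i(t;a)|^2$; the Monotone Convergence Theorem and the comparison above then give
\begin{equation*}
\|u^i\|_{L^2(0,T;H^2(\Omega))}^2\le C\sum_{n=1}^\infty \lambda_n^2\int_0^T |u_n^i(t;q_a)|^2\,dt.
\end{equation*}
For the derivative term I would instead read $\D u_n^i(t;a)=-\lambda_n a(t)u_n^i(t;a)$ off \eqref{ODE_initial}, so that with $a(t)<Q_a$ and the same comparison,
\begin{equation*}
\|\D u^i\|_{L^2([0,T]\times\Omega)}^2=\sum_{n=1}^\infty\int_0^T \lambda_n^2 a(t)^2|u_n^i(t;a)|^2\,dt\le Q_a^2\sum_{n=1}^\infty \lambda_n^2\int_0^T |u_n^i(t;q_a)|^2\,dt.
\end{equation*}
Thus both quantities are controlled by the same series.

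The decisive step, which I expect to require the most care, is the uniform-in-$n$ bound
\begin{equation*}
\lambda_n\int_0^T |E_{\alpha,1}(-\lambda_n q_a t^\alpha)|^2\,dt\le C\,T^{1-\alpha}.
\end{equation*}
Since $\mathrm{arg}(-\lambda_n q_a t^\alpha)=\pi$ lies in the admissible range, Lemma \ref{mittag_bound} gives $|E_{\alpha,1}(-\lambda_n q_a t^\alpha)|^2\le C(1+\lambda_n q_a t^\alpha)^{-2}\le C(\lambda_n q_a t^\alpha)^{-1}$, and because $\alpha<1$ the integral $\int_0^T t^{-\alpha}\,dt=T^{1-\alpha}/(1-\alpha)$ converges; the prefactor $\lambda_n$ then cancels the single power $\lambda_n^{-1}$, leaving a constant times $T^{1-\alpha}$. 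Multiplying by $\lambda_n b_n^2$ and summing yields $\sum_n\lambda_n^2\int_0^T|u_n^i(t;q_a)|^2\,dt\le C\,T^{1-\alpha}\sum_n\lambda_n b_n^2\le C\,T^{1-\alpha}\|u_0\|_{H^1(\Omega)}^2$, and taking square roots produces the stated factor $T^{(1-\alpha)/2}$. This computation also makes the structure of the estimate transparent: the extra power of $\lambda_n$ carried by the $H^2$ and $\D$ norms is exactly what forces the $H^1$-norm (rather than $L^2$-norm) of $u_0$, while absorbing that power through the Mittag–Leffler decay is precisely what generates the temporal gain $T^{1-\alpha}$. Adding the two estimates completes the proof.
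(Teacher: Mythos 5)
Your proposal is correct and follows essentially the same route as the paper's proof: monotone comparison via Lemma \ref{monotone_initial} to reduce to $a\equiv q_a$, the explicit form $u_n^i(t;q_a)=b_nE_{\alpha,1}(-\lambda_nq_at^\alpha)$, Lemma \ref{mittag_bound} to trade one power of $\lambda_n$ for a factor $t^{-\alpha}$ so that only $\sum_n\lambda_nb_n^2\le C\|u_0\|_{H^1(\Omega)}^2$ remains, and the identity $\D u_n^i=-\lambda_na(t)u_n^i$ for the derivative term. The only (immaterial) difference is that you integrate in $t$ mode by mode before summing, whereas the paper first establishes the pointwise bound $\|u^i(\cdot,t;a)\|_{H^2(\Omega)}^2\le Ct^{-\alpha}\|u_0\|_{H^1(\Omega)}^2$ and then integrates.
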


\begin{proof}
 \par Given $t\in [0,T],$ the direct calculation and Lemma \ref{monotone_initial}
 yield that 
 \begin{equation*}
 \begin{split}
  \|u^i(x,t;a)\|_{H^2(\Omega)}^2 
  &\le C\|-\L u^i(x,t;a)\|_{L^2(\Omega)}^2
  =C\|\sum_{n=1}^\infty \lambda_nu^i_n(t;a)\phi_n(x) \|_{L^2(\Omega)}^2\\
  &=C\sum_{n=1}^\infty|\lambda_nu^i_n(t;a)|^2
  \le C\sum_{n=1}^\infty|\lambda_nu^i_n(t;q_a)|^2.
  \end{split}  
 \end{equation*}
Recall that \cite{sakamoto2011initial} established the representation as
  $u^i_n(t;q_a)=b_n E_{\alpha,1}(-\lambda_nq_at^\alpha),\ \N+.$
Hence, by Lemma \ref{mittag_bound}, 
\begin{equation}\label{inequality_11}
\begin{split}
 \|u^i(x,t;a)\|_{H^2(\Omega)}^2
 &\le C\|-\L u^i(x,t;a)\|_{L^2(\Omega)}^2
 \le C\sum_{n=1}^\infty|\lambda_nb_n 
 E_{\alpha,1}(-\lambda_nq_at^\alpha)|^2\\
 &\le C\sum_{n=1}^\infty |\frac{1}{1+\lambda_nq_at^\alpha}|^2\lambda_n^2b_n^2
 = C\sum_{n=1}^\infty|\frac{(\lambda_nq_at^\alpha)^{\frac{1}{2}}}
 {1+\lambda_nq_at^\alpha}|^2t^{-\alpha}q_a^{-1}\lambda_nb_n^2\\
 &\le Ct^{-\alpha}\sum_{n=1}^\infty((-\L)^{\frac{1}{2}}u_0,\phi_n)^2
 \le Ct^{-\alpha}\|u_0\|_{H^1(\Omega)}^2,
\end{split}
\end{equation}
which leads to
$ \|u^i\|_{L^2(0,T;H^2(\Omega))}^2
 \le C\int_0^T t^{-\alpha}\|u_0\|_{H^1(\Omega)}^2 {\rm d}t
 =CT^{1-\alpha}\|u_0\|_{H^1(\Omega)}^2,$
i.e.
\begin{equation}\label{inequality_12}
 \|u^i\|_{L^2(0,T;H^2(\Omega))}
\le C T^{\frac{1-\alpha}{2}}\|u_0\|_{H^1(\Omega)}.
\end{equation}

\par For the estimate of $\D u^i(x,t;a),$ 
\eqref{u^r u^i} and \eqref{ODE_initial} yield  
$$\D u^i(x,t;a)=\sum_{n=1}^{\infty} {\D} u^i_n(t;a)\phi_n(x)
=-\sum_{n=1}^\infty \lambda_n a(t) u^i_n(t;a)\phi_n(x),$$
which together with \eqref{q_a Q_a} gives
\begin{equation*}
\begin{split}
\|\D u^i(x,t;a)\|_{L^2(\Omega)}^2
&\le Q_a^2 \sum_{n=1}^\infty |\lambda_n u^i_n(t;a)|^2\\
&=Q_a^2 \|-\L u^i(x,t;a)\|_{L^2(\Omega)}^2
\le Ct^{-\alpha}\|u_0\|_{H^1(\Omega)}^2,\ t\in[0,T],
\end{split}
\end{equation*}
where the last inequality follows from \eqref{inequality_11}.
This result implies that 
\begin{equation*}
 \|\D u^i(x,t;a)\|_{L^2([0,T]\times\Omega)}^2
 =\int_0^T \|\D u^i(x,t;a)\|_{L^2(\Omega)}^2 {\rm d}t
 \le CT^{1-\alpha}\|u_0\|_{H^1(\Omega)}^2,
\end{equation*}
i.e. $\|\D u^i\|_{L^2([0,T]\times\Omega)}
 \le  C T^{\frac{1-\alpha}{2}}\|u_0\|_{H^1(\Omega)},$ which together 
 with \eqref{inequality_12} completes the proof.
\end{proof}

\par Moreover, with a stronger condition on $u_0,$ such as assuming  
$u_0\in H^2(\Omega)\cap H^1_0(\Omega),$
we can deduce the $C$-regularity estimate of $u^i.$

\begin{corollary}\label{regularity_u^i_H^2}
 With Assumption \ref{assumption_direct} and 
 $u_0\in H^2(\Omega)\cap H^1_0(\Omega),$ then  
 \begin{equation*}
  \|u^i\|_{C([0,T];H^2(\Omega))}+\|\D u^i\|_{C([0,T];L^2(\Omega))}
 \le C\|u_0\|_{H^2(\Omega)}.
\end{equation*}
\end{corollary}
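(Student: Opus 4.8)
The plan is to follow the same spectral route as in Lemma \ref{regularity_u^i}, but to exploit the stronger hypothesis $u_0\in H^2(\Omega)\cap H_0^1(\Omega)$ in two ways. First, this hypothesis upgrades the summability of the initial coefficients: since $\lambda_n b_n=(-\L u_0,\phi_n)$ by symmetry of $-\L$ (both $u_0$ and $\phi_n$ now lie in $H^2(\Omega)\cap H_0^1(\Omega)$), Parseval gives $\sum_{n=1}^\infty \lambda_n^2 b_n^2=\|-\L u_0\|_{L^2(\Omega)}^2\le C\|u_0\|_{H^2(\Omega)}^2<\infty$. Second, in place of the decay bound $|E_{\alpha,1}(-z)|\le C/(1+|z|)$ of Lemma \ref{mittag_bound} (which forced the $t^{-\alpha}$ singularity and hence only an $L^2$-in-time estimate in Lemma \ref{regularity_u^i}), I would use the uniform bound $0\le E_{\alpha,1}(-\lambda_n q_a t^\alpha)\le 1$, immediate from the complete monotonicity in Lemma \ref{mittag_positive_1}.

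With these two inputs the pointwise-in-$t$ bounds are straightforward. Using elliptic regularity together with the monotonicity estimate $|u_n^i(t;a)|\le |u_n^i(t;q_a)|=|b_n|E_{\alpha,1}(-\lambda_n q_a t^\alpha)$ from Lemma \ref{monotone_initial}, I would write
$$\|u^i(\cdot,t;a)\|_{H^2(\Omega)}^2\le C\sum_{n=1}^\infty \lambda_n^2|u_n^i(t;a)|^2\le C\sum_{n=1}^\infty \lambda_n^2 b_n^2\big(E_{\alpha,1}(-\lambda_n q_a t^\alpha)\big)^2\le C\|u_0\|_{H^2(\Omega)}^2,$$
uniformly in $t\in[0,T]$. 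For the Caputo derivative, \eqref{ODE_initial} gives $\D u_n^i(t;a)=-\lambda_n a(t)u_n^i(t;a)$, so \eqref{q_a Q_a} yields $\|\D u^i(\cdot,t;a)\|_{L^2(\Omega)}^2\le Q_a^2\sum_{n=1}^\infty \lambda_n^2|u_n^i(t;a)|^2\le C\|u_0\|_{H^2(\Omega)}^2$, again uniformly in $t$.

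The genuinely new point, absent in Lemma \ref{regularity_u^i} where only time-integrated norms were sought, is upgrading these uniform bounds to continuity in $t$, i.e. to the $C([0,T];\cdot)$ norms. The main step is to establish uniform-in-$t$ convergence of the spectral series. Each partial sum $\sum_{n=1}^N u_n^i(t;a)\phi_n$ is continuous from $[0,T]$ into $H^2(\Omega)$, since $u_n^i\in C[0,T]$ by Theorem \ref{existenceuniqueness} and each $\phi_n$ is a fixed element of $H^2(\Omega)$; likewise $-\sum_{n=1}^N\lambda_n a(t)u_n^i(t;a)\phi_n$ is continuous into $L^2(\Omega)$ because $a\in C^{+}[0,T]$. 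The tail of each series is controlled, uniformly in $t$, by the convergent remainder $\sum_{n>N}\lambda_n^2 b_n^2\to 0$, exactly as in the displayed estimates but restricted to $n>N$. Hence both series converge uniformly on $[0,T]$ in the respective norms, their limits $u^i$ and $\D u^i$ are continuous, and the claimed bound follows. I expect this last uniform-convergence step to be the only delicate point; it is precisely the extra summability $\sum_n\lambda_n^2 b_n^2<\infty$ furnished by $u_0\in H^2(\Omega)$ that makes the tail estimate independent of $t$ and thereby delivers the $C$-in-time regularity.
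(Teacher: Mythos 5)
Your proof is correct and follows essentially the same route as the paper's: a pointwise-in-$t$ spectral estimate based on $|u_n^i(t;a)|\le |b_n|\,E_{\alpha,1}(-\lambda_n q_a t^\alpha)$ from Lemma \ref{monotone_initial}, combined with $\sum_n \lambda_n^2 b_n^2 \le C\|u_0\|_{H^2(\Omega)}^2$. The only differences are cosmetic or additive: you bound the Mittag-Leffler factor by $1$ via complete monotonicity where the paper invokes Lemma \ref{mittag_bound}, and you make explicit the uniform-convergence argument that upgrades the uniform-in-$t$ bounds to genuine $C([0,T];\cdot)$ continuity, a step the paper leaves implicit.
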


\begin{proof}
\par Lemma \ref{mittag_bound} yields that 
\begin{equation*}\label{inequality_15}
\sum_{n=1}^\infty|\lambda_nb_n 
 E_{\alpha,1}(-\lambda_nq_at^\alpha)|^2
 \le C\sum_{n=1}^\infty|\lambda_nb_n|^2
 =C\|-\L u_0\|_{L^2(\Omega)}^2 \le C \|u_0\|_{H^2(\Omega)}^2,\ t\in[0,T];
\end{equation*}
meanwhile, the following estimates have been shown in 
 the proof of Theorem \ref{regularity_u^i}
\begin{equation*}\label{inequality_14}
 \begin{cases}
  \|u^i(x,t;a)\|_{H^2(\Omega)}^2
 \le C\|-\L u^i(x,t;a)\|_{L^2(\Omega)}^2
 \le C\sum_{n=1}^\infty|\lambda_nb_n 
 E_{\alpha,1}(-\lambda_nq_at^\alpha)|^2,\\
 \|\D u^i(x,t;a)\|_{L^2(\Omega)}^2
\le Q_a^2 \sum_{n=1}^\infty |\lambda_n u^i_n(t;a)|^2
=C \|-\L u^i(x,t;a)\|_{L^2(\Omega)}^2.
 \end{cases}
\end{equation*}
Hence, it holds that 
\begin{equation*}
 \|u^i(x,t;a)\|_{H^2(\Omega)}+\|\D u^i(x,t;a)\|_{L^2(\Omega)}
 \le C\|u_0\|_{H^2(\Omega)},\ t\in[0,T],
\end{equation*}
which leads to the claimed result.
\end{proof}

\subsection{Main theorem for the direct problem}

\par The main theorem for the direct problem follows from Theorem 
\ref{existenceuniqueness}, Lemmas  \ref{regularity_u^r} and \ref{regularity_u^i}, 
Corollaries \ref{Cregularity_u^r} and \ref{regularity_u^i_H^2}, and the relation 
$u(x,t;a)=u^r(x,t;a)+u^i(x,t;a).$

\begin{theorem}[Main theorem for the direct problem]\label{main_direct}
 \par Let Assumption \ref{assumption_direct} be valid, then under Definition 
 \ref{weak solution}, there exists a unique weak solution $u(x,t;a)$ 
 of FDE \eqref{fde} with the spectral representation 
 \eqref{solution} and the following regularity estimates:
 $$\|u\|_{L^2(0,T;H^2(\Omega))}+\|\D u\|_{L^2([0,T]\times\Omega)}
\le C(\|F\|_{L^2([0,T]\times\Omega)}
+T^{\frac{1-\alpha}{2}}\|u_0\|_{H^1(\Omega)}).$$
Moreover, if the conditions $u_0\in H^2(\Omega)\cap H^1_0(\Omega)$ and $F\in C^\theta([0,T];L^2(\Omega)),\ 0<\theta<1$ are added, we have: 
 \begin{equation*}
  \|u\|_{C([0,T];H^2(\Omega))}+\|\D u\|_{C([0,T];L^2(\Omega))}
 \le C(\|F\|_{C^\theta([0,T];L^2(\Omega))}+\|u_0\|_{H^2(\Omega)}).
 \end{equation*}
\end{theorem}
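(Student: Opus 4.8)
The plan is to assemble the statement entirely from the component results already proved, exploiting the decomposition $u(x,t;a)=u^r(x,t;a)+u^i(x,t;a)$ together with the linearity of the Djrbashian–Caputo derivative $\D$ and of every norm involved. First I would invoke Theorem \ref{existenceuniqueness} to obtain existence and uniqueness of the weak solution and its spectral representation \eqref{solution}; this needs no new argument, since Assumption \ref{assumption_direct} is precisely the hypothesis of that theorem. The decomposition into $u^r$ (source term, zero initial data) and $u^i$ (zero source, initial data $u_0$) is the one already fixed in the excerpt, and $\D u=\D u^r+\D u^i$ by linearity of the operator.

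For the first (Bochner $L^2$) estimate I would apply the triangle inequality to $u=u^r+u^i$ in the norms $\|\cdot\|_{L^2(0,T;H^2(\Omega))}$ and $\|\cdot\|_{L^2([0,T]\times\Omega)}$, splitting the left-hand side into a $u^r$-contribution and a $u^i$-contribution. Lemma \ref{regularity_u^r} then bounds the $u^r$-part by $C\|F\|_{L^2([0,T]\times\Omega)}$, while Lemma \ref{regularity_u^i} bounds the $u^i$-part by $CT^{\frac{1-\alpha}{2}}\|u_0\|_{H^1(\Omega)}$; summing the two and absorbing constants produces exactly the stated inequality.

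For the second (continuous-in-time) estimate I would run the same triangle-inequality splitting, but now measuring in $\|\cdot\|_{C([0,T];H^2(\Omega))}$ and $\|\cdot\|_{C([0,T];L^2(\Omega))}$. Under the strengthened hypotheses $u_0\in H^2(\Omega)\cap H_0^1(\Omega)$ and $F\in C^\theta([0,T];L^2(\Omega))$ with $0<\theta<1$, Corollary \ref{Cregularity_u^r} controls the $u^r$-part by $C\|F\|_{C^\theta([0,T];L^2(\Omega))}$ and Corollary \ref{regularity_u^i_H^2} controls the $u^i$-part by $C\|u_0\|_{H^2(\Omega)}$; adding these gives the claimed bound, with $C$ depending only on $\Omega$, $-\L$ and $a(t)$.

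Since all the genuine analysis has already been carried out in the cited lemmas and corollaries, I do not expect any real obstacle: the argument is a bookkeeping assembly via linearity and the triangle inequality. The only point worth a line of care is verifying that the splitting $u=u^r+u^i$ is legitimate at the level of the norms used, i.e.\ that $u^r$ and $u^i$ separately belong to the relevant spaces so that the triangle inequality applies term by term. This finiteness is, however, exactly what Lemmas \ref{regularity_u^r}, \ref{regularity_u^i} and Corollaries \ref{Cregularity_u^r}, \ref{regularity_u^i_H^2} already guarantee, so the conclusion follows directly.
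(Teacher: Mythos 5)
Your proposal is correct and follows exactly the route the paper intends: the paper itself states that Theorem \ref{main_direct} follows from Theorem \ref{existenceuniqueness}, Lemmas \ref{regularity_u^r} and \ref{regularity_u^i}, Corollaries \ref{Cregularity_u^r} and \ref{regularity_u^i_H^2}, and the splitting $u=u^r+u^i$, which is precisely your triangle-inequality assembly. Nothing is missing.
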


\section{Inverse Problem--Reconstruction of the diffusion coefficient $a(t)$}

\par In this section, we discuss how to recover the coefficient $a(t)$ 
through the output flux data 
$$a(t)\frac{\partial u}{\n}(x_0,t;a)=g(t),\ x_0\in \partial\Omega.$$ 
All cross the inverse problem work, the operator $-\L$ is assumed to satisfy the condition \eqref{sobolev assumption}, then the expression $\frac{\partial \phi_n}{\n}(x_0)$ makes sense.
We only consider this reconstruction in the space $C^+[0,T],$ which 
can be regarded as the admissible set for $a(t).$
To this end, we introduce an operator $K,$ which will be shown to have 
a fixed point consisting of the desired coefficient $a(t).$ 

\subsection{Operator $K$}

\par The operator $K$ is defined as 
$$K \psi(t):=\frac{g(t)}{\frac{\partial u}{\n}(x_0,t;\psi)}
=\frac{g(t)}{\sum\limits_{n=1}^\infty u_n(t;\psi)
\frac{\partial \phi_n}{\n}(x_0)},
\ t\in [0,T]$$ 
with domain 
$$\DK:=\{\psi\in C^{+}[0,T]:\psi(t)\ge g(t)\Big[\frac{\partial u_0}{\n}(x_0)
+ \I[\frac{\partial F}{\n}(x_0,t)]\Big]^{-1},\ t\in[0,T] \}.$$

\par To analyze $K$, we make the following assumptions. 
\begin{assumption}\label{assumption_inverse}
 $u_0,$ $F$ and $g$ should satisfy the following restrictions:
 \begin{itemize}
  \item [(a)] $u_0\in H^3(\Omega)\cap H_0^1(\Omega)$ with 
  $b_n:=(u_0,\phi_n)\ge 0,\ \N+;$
  \item [(b)] $\exists \theta\in (0,1)$ s.t. $F(x,t)\in C^\theta([0,T];H^3(\Omega)\cap H_0^1(\Omega))$ with 
  $F_n(t):=(F(\cdot,t),\phi_n)\ge 0$ on $[0,T]$ for each $\N+;$
  \item [(c)] $\exists N\in \mathbb{N}^{+}$ s.t. 
  $\frac{\partial \phi_N}{\n}(x_0)>0,$ $b_N>0$ and $F_N(t)>0$ on $[0,T];$ 
  \item [(d)] $g\in C^+[0,T].$
 \end{itemize}
\end{assumption}

\par The next remark shows that the equality in the definition of $K$ is valid. 
\begin{remark}\label{partial derivative of u}
Given $\psi\in C^+[0,T]$ and for each $t\in[0,T],$ by the proofs of Corollaries \ref{Cregularity_u^r} and \ref{regularity_u^i_H^2}, we have 
\begin{equation*}
\begin{split}
\|u^{r,+}(x,t;\psi)\|^2_{H^3(\Omega)}
&\le C \|(-\L)^{3/2}u^{r,+}\|^2_{L^2(\Omega)}
\le C \sum_{n=1}^\infty
|\lambda_n^{3/2}u^{r,+}_n(t;\psi)|^2\\
&\le C \sum_{n=1}^\infty
\left|\lambda_n^{3/2} \int_0^t F^+_n(\tau) (t-\tau)^{\alpha-1} 
E_{\alpha,\alpha}(-\lambda_n q_\psi(t-\tau)^\alpha){\rm d}\tau \right|^2\\
&\le C \sum_{n=1}^\infty
\left|\lambda_n \int_0^t \lambda_n^{1/2}|F^+_n(\tau) -F^+_n(t)|(t-\tau)^{\alpha-1} 
E_{\alpha,\alpha}(-\lambda_n q_\psi(t-\tau)^\alpha){\rm d}\tau\right|^2\\
&\quad +C \sum_{n=1}^\infty 
\left|\lambda_n^{1/2}F_n^+(t)(1-E_{\alpha,1}(-\lambda_n q_\psi t^\alpha))\right|^2\\
&\le C \sum_{n=1}^\infty
\left|\lambda_n \int_0^t \lambda_n^{1/2}|F_n(\tau) -F_n(t)|(t-\tau)^{\alpha-1} 
E_{\alpha,\alpha}(-\lambda_n q_\psi(t-\tau)^\alpha){\rm d}\tau\right|^2\\
&\quad +C \sum_{n=1}^\infty 
\left|\lambda_n^{1/2}F_n(t)(1-E_{\alpha,1}(-\lambda_n q_\psi t^\alpha))\right|^2\\
&\le C \|(-\L)^{1/2} F\|_{C^\theta([0,T];L^2(\Omega))}^2+C\|(-\L)^{1/2} F(\cdot,t)\|_{L^2(\Omega)}^2\\
&\le C \| F\|_{C^\theta([0,T];H^1(\Omega))}^2+C\|F(\cdot,t)\|_{H^1(\Omega)}^2
\end{split}
\end{equation*}
and 
$$\|u^{r,-}(x,t;\psi)\|^2_{H^3(\Omega)}
\le C \| F\|_{C^\theta([0,T];H^1(\Omega))}^2+C\|F(\cdot,t)\|_{H^1(\Omega)}^2,$$
which give 
$\|u^r\|_{C([0,T];H^3(\Omega))}
\le C\| F\|_{C^\theta([0,T];H^1(\Omega))}$;
\begin{equation*}
\begin{split}
\|u^i(x,t;\psi)\|^2_{H^3(\Omega)}
&\le C \|(-\L)^{3/2}u^i\|^2_{L^2(\Omega)}
\le C \|\sum_{n=1}^\infty\lambda_n^{3/2}u_n^i(t;\psi)\phi_n(x)\|^2_{L^2(\Omega)}\\
&\le C\sum_{n=1}^\infty|\lambda_n^{3/2}b_n 
E_{\alpha,1}(-\lambda_nq_\psi t^\alpha)|^2
\le C\sum_{n=1}^\infty|\lambda_n^{3/2}b_n|^2\\
&=C\|(-\L)^{3/2} u_0\|_{L^2(\Omega)}^2 \le C \|u_0\|_{H^3(\Omega)}^2,
\end{split}
\end{equation*}
which gives $\|u^i\|_{C([0,T];H^3(\Omega))}\le C \|u_0\|_{H^3(\Omega)}.$
Combining the above two results yields that 
$$\|u\|_{C([0,T];H^3(\Omega))}\le C (\| F\|_{C^\theta([0,T];H^1(\Omega))}+\|u_0\|_{H^3(\Omega)})<\infty,$$
which means for each $t\in [0,T],$ 
$\|u\|_{H^3(\Omega)}<\infty.$ Recall that $\Omega\subset R^n, n=1,2,3$, then the Sobolev Embedding Theorem gives 
$$u(x,t;\psi)=\sum_{n=1}^\infty u_n(t;\psi)\phi_n(x)\in C^1(\overline{\Omega})\ \text{for each}\ t\in [0,T].$$ 
Hence, 
$\sum\limits_{n=1}^\infty u_n(t;\psi)
\frac{\partial \phi_n}{\n}(x_0)$ is well-defined and 
$$\frac{\partial u}{\n}(x_0,t;\psi)=\sum\limits_{n=1}^\infty u_n(t;\psi)
\frac{\partial \phi_n}{\n}(x_0),\quad t\in[0,T].$$
\end{remark}

\par The following two remarks will explain the reasonableness and reason 
for Assumption \ref{assumption_inverse}.
\begin{remark}\label{resonable_assumption_inverse}
 \par For the inverse problem, the right-hand side function $F(x,t)$
 and the initial condition $u_0(x)$ are input data, which, 
 at least in some circumstance, can be assumed to be controlled. 
 Even though Assumption \ref{assumption_inverse} 
 $(a),$ $(b)$ and $(c)$ appear restrictive, it is not hard to construct 
 functions that satisfy them. For example, in $(a)$ if $u_0=c\phi_k$ for 
 some $c>0,$ then Assumption \ref{assumption_inverse} $(a)$ will be 
 satisfied. This will also be true if $u_0=\sum_{k=1}^{M} c_k\phi_k$ 
 with all $c_k>0.$ Similarly, $(b)$ is satisfied if $F(x,t)$ is also a 
 linear combination of $\{\phi_n:\N+\}$ with positive coefficients. 
 For $(c),$ by the completeness of $\{\phi_n:\N+\}$ in $L^2(\Omega),$ there should exist 
 $N\in \mathbb{N}^{+}$ s.t. $\frac{\partial \phi_N}{\n}(x_0)>0.$ Otherwise, for each 
 $\psi\in H^3(\Omega)\subset L^2(\Omega),$ $\frac{\partial \psi}{\n}(x_0)=0$ and obviously 
 it is incorrect. Then for this $N,$ we only need to set the 
 coefficients of $u_0$ and $F$ upon $\phi_N$ be strictly positive. 
  \par The output flux data $g(t),$ it is not under our control. 
  However, if there exists $a\in C^+[0,T]$ s.t. $a(t)\frac{\partial u}{\n}(x_0,t;a)=g(t),$  
 Assumption \ref{assumption_inverse} $(a),$ $(b)$ and Corollary 
 \ref{sign_eigenfunction} yield that $u_n(t;a)\ge 0;$ 
  \eqref{sign_eigenfunction_derivative} gives
 $\frac{\partial \phi_n}{\n}(x_0)\ge 0, \N+;$
 Assumption \ref{assumption_inverse} $(c)$ ensures 
 $\frac{\partial \phi_N}{\n}(x_0)>0$ and 
 $u_N(t;a)>0$ on $[0,T],$ where the proof can be seen in 
 Lemma \ref{well_definedness}. 
 Consequently, $$\frac{\partial u}{\n}(x_0,t;a)
 =\sum\limits_{n=1}^\infty u_n(t;a)\frac{\partial \phi_n}{\n}(x_0)
 \ge u_N(t;a)\frac{\partial \phi_N}{\n}(x_0)>0,\ t\in[0,T].$$ 
 This together with $a\in C^+[0,T]$ gives that $g>0.$
 The continuity of $g$ follows from the ones of $a$ and $u_n(t;a),\ \N+,$ 
 which are derived from the admissible set $C^+[0,T]$ and 
 Theorem \ref{existenceuniqueness}, respectively.
 Therefore, Assumption \ref{assumption_inverse} $(d)$ is 
 reasonable and can be attained.
\end{remark}

\begin{remark}\label{remark_assumption_inverse}
 The well-definedness of the domain $\DK$ is guaranteed by 
 Assumption \ref{assumption_inverse} $(a)$, $(b)$, $(c)$ and $(d)$ in the 
 sense that the $H^3$-regularity of $u_0,$ $F$ and the Sobolev Embedding Theorem support that $\frac{\partial u_0}{\n}(x_0)$ and $\frac{\partial F}{\n}(x_0,t)$ are well defined, and 
 the dominator of the lower bound of $\DK$ 
 $$
 \frac{\partial u_0}{\n}(x_0)+\I[\frac{\partial F}{\n}(x_0,t)]
 =\sum_{n=1}^{\infty} (b_n+\I F_n) \frac{\partial \phi_n}{\n}(x_0)
 \ge (b_N+\I F_N) \frac{\partial \phi_N}{\n}(x_0)>0
 $$
 on $[0,T].$ Recall that the numerator $g>0,$ so that the lower bound 
 $g(t)\Big[\frac{\partial u_0}{\n}(x_0)
 +\I[\frac{\partial F}{\n}(x_0,t)]\Big]^{-1}>0,$ which 
 gives that $\DK$ is a subspace of $C^+[0,T].$ 
 Also, $F(x,t)\in C^\theta([0,T];H^3(\Omega)\cap H_0^1(\Omega))$ yields that 
 $F_N(t)$ is continuous on $[0,T],$ so is 
 $(b_N+\I F_N) \frac{\partial \phi_N}{\n}(x_0).$ Then $\exists C>0$
 s.t. $(b_N+\I F_N) \frac{\partial \phi_N}{\n}(x_0)>C>0,$ which leads to 
 the dominator $$\frac{\partial u_0}{\n}(x_0)
 +\I[\frac{\partial F}{\n}(x_0,t)]>C>0\  \text{on}\ [0,T].$$ 
 The strict positivity of the dominator avoids $\DK$ degenerating 
 to an empty set. 
 \par In order to show the well-definedness of $K,$  
 Assumption \ref{assumption_inverse} $(a),$ $(b)$ and $(c)$ will 
 be used. Furthermore, Assumption \ref{assumption_inverse} 
 $(a)$ and $(b)$ are crucial to build the monotonicity of operator $K$; 
 meanwhile, Assumption \ref{assumption_inverse} $(c)$ is stated for 
 the uniqueness of fixed points of $K$.
\end{remark}

\par For the operator $K$, we have the following lemmas. 
\begin{lemma}\label{well_definedness}
 The operator $K$ is well-defined.
\end{lemma}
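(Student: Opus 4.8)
The plan is to prove the stronger statement that $K$ maps $\DK$ into itself, which in particular shows that for every $\psi\in\DK$ the quotient defining $K\psi$ is a genuine element of $C^+[0,T]$. Fix $\psi\in\DK$ and write $0<q_\psi<\psi(t)<Q_\psi$ on $[0,T]$ as in \eqref{q_a Q_a}. First I would settle the convergence and continuity of the denominator. Remark \ref{partial derivative of u} already gives $u(\cdot,t;\psi)\in C^1(\overline{\Omega})$ with the identity $\frac{\partial u}{\n}(x_0,t;\psi)=\sum_{n=1}^\infty u_n(t;\psi)\frac{\partial\phi_n}{\n}(x_0)$, together with the bound $u\in C([0,T];H^3(\Omega))$; composing with the continuous Sobolev embedding $H^3(\Omega)\hookrightarrow C^1(\overline{\Omega})$ (available because $\Omega\subset\mathbb{R}^n$ with $n\le 3$) upgrades this to $u\in C([0,T];C^1(\overline{\Omega}))$, so that $t\mapsto\frac{\partial u}{\n}(x_0,t;\psi)$ is continuous on $[0,T]$.

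The crux is to bound this denominator away from zero. By Assumption \ref{assumption_inverse}(a),(b) we have $b_n\ge 0$ and $F_n\ge 0$, so the splitting $u_n=u_n^i+u_n^r$ combined with Lemma \ref{sign_u^i} and Lemma \ref{sign_right_+-} yields $u_n(t;\psi)\ge 0$ for every $\N+$; with \eqref{sign_eigenfunction_derivative} each term $u_n(t;\psi)\frac{\partial\phi_n}{\n}(x_0)$ is nonnegative. To keep the sum positive I would isolate the distinguished index $N$ of Assumption \ref{assumption_inverse}(c). Since $\psi\le Q_\psi$ and $b_N>0$, Lemma \ref{monotone_initial} gives $u_N^i(t;\psi)\ge u_N^i(t;Q_\psi)=b_N E_{\alpha,1}(-\lambda_N Q_\psi t^\alpha)$, and because $E_{\alpha,1}(-x)$ is strictly positive and decreasing for $x\ge 0$ (Lemma \ref{mittag_positive_1}) this is bounded below, uniformly on $[0,T]$, by $b_N E_{\alpha,1}(-\lambda_N Q_\psi T^\alpha)>0$. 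Hence, using $u_N^r\ge 0$ and $\frac{\partial\phi_N}{\n}(x_0)>0$,
$$\frac{\partial u}{\n}(x_0,t;\psi)\ge u_N(t;\psi)\frac{\partial\phi_N}{\n}(x_0)\ge b_N E_{\alpha,1}(-\lambda_N Q_\psi T^\alpha)\frac{\partial\phi_N}{\n}(x_0)>0,\quad t\in[0,T].$$
Together with $g\in C^+[0,T]$ from Assumption \ref{assumption_inverse}(d), this makes $K\psi=g\big/\frac{\partial u}{\n}(x_0,\cdot;\psi)$ a well-defined, continuous and strictly positive function, i.e. $K\psi\in C^+[0,T]$.

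It remains to check the lower bound defining $\DK$, for which I would prove the termwise estimate $u_n(t;\psi)\le b_n+\I F_n(t)$. Put $v_n:=b_n+\I F_n$; by Lemma \ref{I_alpha} one has $\D v_n=F_n$ and $v_n(0)=b_n$, with $v_n,\D v_n\in C[0,T]$, and $v_n\ge 0$ since $b_n,F_n\ge 0$. Then $w:=v_n-u_n$ satisfies $w(0)=0$ and $\D w+\lambda_n\psi w=\lambda_n\psi v_n\ge 0$, so Corollary \ref{sign_eigenfunction} forces $w\ge 0$, that is $u_n\le v_n$. Multiplying by $\frac{\partial\phi_n}{\n}(x_0)\ge 0$ and summing gives $\frac{\partial u}{\n}(x_0,t;\psi)\le\frac{\partial u_0}{\n}(x_0)+\I[\frac{\partial F}{\n}(x_0,t)]$; dividing the positive $g(t)$ by the two sides then yields $K\psi(t)\ge g(t)\big[\frac{\partial u_0}{\n}(x_0)+\I[\frac{\partial F}{\n}(x_0,t)]\big]^{-1}$, which is precisely the membership condition for $\DK$.

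I expect the genuine obstacle to be the uniform strict positivity of the denominator: nonnegativity of the series is automatic, but excluding a zero of $\frac{\partial u}{\n}(x_0,t;\psi)$ somewhere on $[0,T]$ needs both the monotone comparison of Lemma \ref{monotone_initial} and the strict positivity of the Mittag-Leffler function, and this is exactly where Assumption \ref{assumption_inverse}(c) is indispensable. By contrast the convergence and continuity of the defining series come for free from the $H^3$-regularity established in Remark \ref{partial derivative of u}.
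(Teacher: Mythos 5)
Your proof is correct, but the decisive step is argued quite differently from the paper. The paper's proof of this lemma does only two things: it quotes Theorem \ref{existenceuniqueness} for existence/uniqueness of the $u_n(t;\psi)$, and then shows the denominator is positive by a contradiction argument built on the Caputo extremal principle (Lemma \ref{extreme}): if $u_N$ vanished at some $t_0$, that would be an interior minimum with $u_N(t_0;\psi)=0$, so the ODE \eqref{ODE} would force $\D u_N(t_0;\psi)=F_N(t_0)\le 0$, contradicting $F_N>0$ from Assumption \ref{assumption_inverse}(c). You instead split $u_N=u_N^i+u_N^r$, discard the nonnegative $u_N^r$, and compare $u_N^i(t;\psi)$ with the constant-coefficient solution via Lemma \ref{monotone_initial}, obtaining the explicit uniform lower bound $b_N E_{\alpha,1}(-\lambda_N Q_\psi T^\alpha)\frac{\partial \phi_N}{\n}(x_0)>0$. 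Your route buys a quantitative, uniform-in-$t$ lower bound on the denominator and, notably, does not use the hypothesis $F_N>0$ at all (only $b_N>0$ and $\frac{\partial\phi_N}{\n}(x_0)>0$), whereas the paper's route needs both $b_N>0$ (to exclude $t_0=0$) and $F_N>0$ (for the contradiction) but avoids any reference to the Mittag-Leffler representation. One small caveat: you invoke strict positivity of $E_{\alpha,1}(-x)$, while Lemmas \ref{mittag_positive} and \ref{mittag_positive_1} as stated only give nonnegativity; strict positivity does hold (complete monotonicity plus $E_{\alpha,1}(0)=1$ and Bernstein's theorem, or the asymptotic $E_{\alpha,1}(-x)\sim 1/(\Gamma(1-\alpha)x)$), but it deserves a sentence. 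Finally, the second half of your argument (the termwise bound $u_n\le b_n+\I F_n$ via the comparison function $v_n=b_n+\I F_n$ and Corollary \ref{sign_eigenfunction}) is not needed for this lemma; it reproduces, by a mildly different route, the paper's separate Lemma \ref{itself} showing $K$ maps $\DK$ into $\DK$, where the paper instead applies $\I$ directly to the ODE.
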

\begin{proof}
 \par For each $\psi\in \DK,$ Theorem \ref{existenceuniqueness} ensures that 
  there exists a unique $u_n(t;\psi)$ for $\N+,$ 
 which implies the existence and uniqueness of $K\psi.$
 
 \par Then it is suffice to show the dominator $\sum\limits_{n=1}^\infty u_n(t;\psi)
\frac{\partial \phi_n}{\n}(x_0)>0$ on $[0,T].$
With \eqref{ODE}, Lemma \ref{sign_eigenfunction} and Assumption 
\ref{assumption_inverse} $(a)$ and $(b),$ we have 
$u_n(t;\psi)\ge 0$ on $[0,T],$ which together with 
$\frac{\partial \phi_n}{\n}(x_0)\ge0$ gives 
$\sum\limits_{n=1}^\infty u_n(t;\psi)\frac{\partial \phi_n}{\n}(x_0)
\ge u_N(t;\psi)\frac{\partial \phi_N}{\n}(x_0).$
Due to the assumption $\frac{\partial \phi_N}{\n}(x_0)>0,$ 
we claim that $u_N(t;\psi)>0.$ 
Assume not, i.e. $\exists t_0\in [0,T]$ s.t. $u_N(t_0;\psi)\le 0.$ 
The result $u_N(t;\psi)\ge 0$ yields that $u_N(t_0;\psi)=0$ so that 
$u_N(t;\psi)$ attains its minimum at $t=t_0.$ 
$u_N(0;\psi)=b_N>0$ implies $t_0\ne 0,$ i.e. $t_0\in(0,T].$ 
Then Lemma \ref{extreme}, $u_N(t_0;\psi)=0$ and the ODE \eqref{ODE}
show that $^C\!D_{t}^{\alpha}u_N(t_0;\psi)=F_N(t_0)\le 0,$ 
which contradicts with Assumption \ref{assumption_inverse} $(c)$ 
and confirms the claim. Hence, 
$$\sum\limits_{n=1}^\infty u_n(t;\psi)\frac{\partial \phi_n}{\n}(x_0)
\ge u_N(t;\psi)\frac{\partial \phi_N}{\n}(x_0)>0,$$ 
which completes the proof.
\end{proof}

\begin{lemma}\label{itself}
 $K$ maps $\DK$ into $\DK.$
\end{lemma}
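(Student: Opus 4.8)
The plan is to verify the two conditions that define membership in $\DK$ for the image $K\psi$, namely that $K\psi\in C^{+}[0,T]$ and that $K\psi$ dominates the prescribed lower bound. Throughout I fix $\psi\in\DK$ and write the denominator as $D(t):=\frac{\partial u}{\n}(x_0,t;\psi)=\sum_{n=1}^\infty u_n(t;\psi)\frac{\partial\phi_n}{\n}(x_0)$ and the comparison quantity as $E(t):=\frac{\partial u_0}{\n}(x_0)+\I\big[\frac{\partial F}{\n}(x_0,t)\big]=\sum_{n=1}^\infty\big(b_n+\I F_n\big)\frac{\partial\phi_n}{\n}(x_0)$.

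First I would dispose of regularity and positivity. By Lemma \ref{well_definedness} the denominator $D(t)$ is strictly positive on $[0,T]$, and by Remark \ref{partial derivative of u} it is continuous in $t$, being the normal trace of $u(\cdot,t;\psi)\in C([0,T];H^3(\Omega))$. Together with $g\in C^{+}[0,T]$ from Assumption \ref{assumption_inverse}(d), this shows $K\psi=g/D\in C^{+}[0,T]$ at once.

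The substantive step is the lower bound. Since $g>0$ and $D,E>0$ (the latter by Remark \ref{remark_assumption_inverse}), the required inequality $K\psi(t)\ge g(t)\,E(t)^{-1}$ is equivalent to the reversed inequality between denominators, $D(t)\le E(t)$. Comparing the two series and using $\frac{\partial\phi_n}{\n}(x_0)\ge0$ from \eqref{sign_eigenfunction_derivative}, it suffices to establish the termwise estimate $u_n(t;\psi)\le b_n+\I F_n(t)$ for every $\N+$. To this end I set $v_n(t):=b_n+\I F_n(t)$; by Lemma \ref{I_alpha} (valid since $F_n,\I F_n\in C[0,T]$) we have $\D v_n=F_n$ and $v_n(0)=b_n$, so $w:=u_n(\cdot;\psi)-v_n$ obeys $w(0)=0$ and, using the ODE \eqref{ODE},
$$\D w+\lambda_n\psi(t)\,w=-\lambda_n\psi(t)\,v_n(t).$$
Assumption \ref{assumption_inverse}(a),(b) give $b_n\ge0$ and $F_n\ge0$, whence $v_n\ge0$ and the right-hand side is $\le0$; since $\lambda_n\psi\in C^{+}[0,T]$, Lemma \ref{sign} (equivalently Corollary \ref{sign_eigenfunction}) forces $w\le0$, i.e. $u_n(t;\psi)\le v_n(t)$. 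Summing against the nonnegative weights $\frac{\partial\phi_n}{\n}(x_0)$ yields $D\le E$, and hence $K\psi\in\DK$.

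The main obstacle is really the choice and sign of the comparison function: one must pick $v_n$ so that its Caputo derivative reproduces the source $F_n$ exactly—this is precisely what forces the use of the composition identity in Lemma \ref{I_alpha}—and then recognize that the residual term $-\lambda_n\psi v_n$ carries a definite sign thanks to the nonnegativity hypotheses on the data in Assumption \ref{assumption_inverse}(a),(b). Everything else (continuity, strict positivity, and passage from the termwise inequality to the series inequality) is routine once Lemma \ref{well_definedness} and Remark \ref{partial derivative of u} are in hand.
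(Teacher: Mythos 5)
Your proof is correct, and its skeleton (continuity of $K\psi$, a termwise bound $u_n(t;\psi)\le b_n+\I F_n(t)$, then summation against the nonnegative weights $\frac{\partial\phi_n}{\n}(x_0)$) matches the paper's. The one substantive difference is how the termwise bound is obtained. The paper applies $\I$ to the ODE \eqref{ODE} and uses the identity $\I\circ\D u_n=u_n-u_n(0)$ from Lemma \ref{I_alpha} to get $u_n(t;\psi)+\lambda_n\I[\psi u_n]=b_n+\I F_n$, after which the bound follows by simply discarding the nonnegative term $\lambda_n\I[\psi u_n]$ (using $u_n\ge0$ from Lemma \ref{well_definedness} and $\psi>0$). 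You instead build the explicit supersolution $v_n=b_n+\I F_n$, verify $\D v_n=F_n$ via the other half of Lemma \ref{I_alpha}, and run the fractional comparison principle (Lemma \ref{sign}) on $w=u_n-v_n$. Both are legitimate; the paper's route is slightly more economical because it only ever integrates $\D u_n$ (whose continuity is guaranteed by Theorem \ref{existenceuniqueness}), whereas your route needs $\D(\I F_n)=F_n$ together with the regularity of $w$ and $\D w$ demanded by Lemma \ref{sign} -- this is standard and your parenthetical justification is in the right spirit, but note that Lemma \ref{I_alpha} as stated in the paper assumes $u,\D u\in C[0,T]$ for the composed function, so you are implicitly leaning on the (true, but unstated here) fact that $\D\circ\I$ acts as the identity on continuous inputs. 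Your handling of the continuity of the denominator via Remark \ref{partial derivative of u} is, if anything, more explicit than the paper's one-line appeal to the continuity of each $u_n$ and of $g$.
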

\begin{proof}
 \par Given $\psi\in \DK.$ The continuity of $K\psi$ follows from the continuity 
 of $u_n(t;\psi)$ for each $\N+$ and the continuity of $g$, which are 
 established by Theorem \ref{existenceuniqueness} and Assumption 
 \ref{assumption_inverse} $(d)$ respectively.
 
 \par For each $\N+,$ \eqref{ODE} ensures $u_n(t;\psi)$ satisfies 
 \begin{equation*}
  \D u_n(t;\psi)+\lambda_n \psi(t) u_n(t;\psi)=F_n(t),
  \ u_n(0;\psi)=b_n.
 \end{equation*}
Taking $\I$ on both sides of the above ODE and using 
 Lemma \ref{I_alpha} yield that 
 \begin{equation*}
  u_n(t;\psi)+\lambda_n \I[\psi(t) u_n(t;\psi)]=\I F_n+b_n.
 \end{equation*}
 From the proof of Lemma \ref{well_definedness}, we have 
 $u_n(t;\psi)\ge 0$ on $[0,T],$ which together with 
 $\lambda_n>0,$ the positivity of $\psi$ and the definition of 
 $\I$ yields that $\lambda_n \I[\psi(t) u_n(t;\psi)]\ge 0.$
 Since $u_n(t;\psi)\ge 0$ and 
 $\lambda_n \I[\psi(t) u_n(t;\psi)]\ge 0,$ 
 we deduce that $0\le u_n(t;\psi)\le \I F_n+b_n$ on $[0,T].$
 Hence, with $\frac{\partial \phi_n}{\n}(x_0)\ge 0$ and 
 the smoothness assumptions $u_0\in H^3(\Omega)\cap H_0^1(\Omega),\ 
F\in C^\theta([0,T];H^3(\Omega)\cap H_0^1(\Omega))$ 
stated in Assumption \ref{assumption_inverse} $(a)$ and $(b)$ respectively,
 the following inequality holds
 \begin{equation*}\label{inequality_16}
  \sum_{n=1}^\infty u_n(t;\psi)\frac{\partial \phi_n}{\n}(x_0)
  \le \sum_{n=1}^\infty (\I F_n+b_n) \frac{\partial \phi_n}{\n}(x_0)
  =\frac{\partial u_0}{\n}(x_0)+\I[\frac{\partial F}{\n}(x_0,t)],
 \end{equation*}
which together with $g>0$ yields that 
$$K\psi(t) = \frac{g(t)}{\sum\limits_{n=1}^\infty u_n(t;\psi)
\frac{\partial \phi_n}{\n}(x_0)}\ge
g(t)\Big[\frac{\partial u_0}{\n}(x_0)
+ \I[\frac{\partial F}{\n}(x_0,t)]\Big]^{-1}>0,\ t\in[0,T],
$$
where the last inequality follows from Remark \ref{remark_assumption_inverse}.
The above result and the continuity of $K\psi$ lead to 
$K\psi \in \DK,$ which is the expected result.
\end{proof}

\subsection{Monotonicity}
\par In this part, we show the monotonicity of the operator $K$.
\begin{theorem}[Monotonicity]\label{monotonicity}
 Given $a_1,a_2\in \DK$ with $a_1\le a_2,$ then 
 $Ka_1\le Ka_2$ on $[0,T].$
\end{theorem}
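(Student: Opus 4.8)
The plan is to reduce the claimed monotonicity of $K$ to a termwise comparison of the series in its denominator. Writing
$$Ka_i(t)=\frac{g(t)}{\sum_{n=1}^\infty u_n(t;a_i)\frac{\partial \phi_n}{\n}(x_0)},\quad i=1,2,$$
and recalling from Lemma \ref{well_definedness} that both denominators are strictly positive on $[0,T]$ while $g>0$ by Assumption \ref{assumption_inverse} $(d)$, the desired inequality $Ka_1\le Ka_2$ is equivalent, after dividing the fixed positive numerator $g$ by two positive quantities, to the reversed inequality
$$\sum_{n=1}^\infty u_n(t;a_2)\frac{\partial \phi_n}{\n}(x_0)\le\sum_{n=1}^\infty u_n(t;a_1)\frac{\partial \phi_n}{\n}(x_0),\quad t\in[0,T].$$
So it suffices to prove that the denominator is nonincreasing in the argument $a$.

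First I would exploit the sign conditions of Assumption \ref{assumption_inverse} $(a)$ and $(b)$. Since $b_n\ge 0$ and $F_n(t)\ge 0$ for every $\N+$, the splitting \eqref{+-} forces $F_n^+=F_n$ and $F_n^-\equiv 0$, whence $u_n^{r,-}\equiv 0$ and $u_n(t;a)=u_n^{r,+}(t;a)+u_n^i(t;a)$. Applying Lemma \ref{monotone_right} to the right-hand-side part $u_n^{r,+}$ and Lemma \ref{monotone_initial} (in its $b_n\ge 0$ branch) to the initial-data part $u_n^i$, the hypothesis $a_1\le a_2$ yields, for each $\N+$ and all $t\in[0,T]$,
$$0\le u_n^{r,+}(t;a_2)\le u_n^{r,+}(t;a_1),\qquad 0\le u_n^i(t;a_2)\le u_n^i(t;a_1).$$
Adding these gives the clean termwise bound $0\le u_n(t;a_2)\le u_n(t;a_1)$.

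Next I would multiply this termwise inequality by the factor $\frac{\partial \phi_n}{\n}(x_0)$, which is nonnegative by the normalization \eqref{sign_eigenfunction_derivative}, and sum over $n$. Because every summand on each side is nonnegative, the comparison passes directly to the infinite series and delivers exactly the denominator inequality displayed above; dividing $g>0$ by the two ordered positive denominators then reverses the order and gives $Ka_1(t)\le Ka_2(t)$ on $[0,T]$.

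I do not expect a genuine obstacle here, since the analytic work has already been carried out in the monotonicity Lemmas \ref{monotone_right} and \ref{monotone_initial}. The only points needing care are bookkeeping ones: to record at the outset that Assumption \ref{assumption_inverse} $(a)$–$(b)$ makes the negative part $u_n^{r,-}$ vanish, so that the monotone bounds available separately for $u_n^{r,+}$ and $u_n^i$ combine into a bound for the full coefficient $u_n$; and to note that the sign of $\frac{\partial \phi_n}{\n}(x_0)$ from \eqref{sign_eigenfunction_derivative}, the strict positivity of the denominators from Lemma \ref{well_definedness}, and the positivity of $g$ from Assumption \ref{assumption_inverse} $(d)$ are precisely the ingredients that turn the termwise estimate into the order-reversing division producing $Ka_1\le Ka_2$.
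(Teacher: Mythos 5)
Your proof is correct. It arrives at the same termwise inequality $0\le u_n(t;a_2)\le u_n(t;a_1)$ as the paper, but by a slightly different route: you decompose $u_n=u_n^{r,+}+u_n^i$ (after observing that Assumption \ref{assumption_inverse} $(a)$--$(b)$ forces $F_n^-\equiv 0$ and hence $u_n^{r,-}\equiv 0$) and then import the monotonicity of each piece from Lemmas \ref{monotone_right} and \ref{monotone_initial}, whereas the paper works with the full coefficient directly: it sets $w=u_n(t;a_1)-u_n(t;a_2)$, derives the comparison problem $\D w+\lambda_n a_1(t)w(t)=\lambda_n u_n(t;a_2)\bigl(a_2(t)-a_1(t)\bigr)\ge 0$, $w(0)=0$, and applies Lemma \ref{sign} once. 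Logically the two arguments are the same mechanism, since your cited lemmas are themselves proved by exactly this $w$-trick applied to the components; nothing is gained or lost in generality. The one practical difference is that the paper's explicit equation for $w$ (its \eqref{w}) is reused verbatim in the uniqueness argument of Lemma \ref{uniqueness_monotone}, so deriving it inside this proof does double duty, while your version keeps the theorem's proof shorter at the cost of leaning on the earlier lemmas. Your closing steps --- multiplying by $\frac{\partial \phi_n}{\n}(x_0)\ge 0$ from \eqref{sign_eigenfunction_derivative}, summing the nonnegative terms, invoking the strict positivity of the denominators from Lemma \ref{well_definedness}, and dividing $g>0$ by the two ordered positive quantities --- coincide with the paper's.
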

\begin{proof}
\par Pick $\N+,$ due to \eqref{ODE}, $u_n(t;a_1)$ and $u_n(t;a_2)$ 
satisfy 
\begin{equation*}
\begin{cases}
\D u_n(t;a_1)+\lambda_n a_1(t) u_n(t;a_1)=F_n(t),
  \ u_n(0;a_1)=b_n;\\
  \D u_n(t;a_2)+\lambda_n a_2(t) u_n(t;a_2)=F_n(t),
  \ u_n(0;a_2)=b_n,
\end{cases}  
\end{equation*}
which together with $a_1\le a_2$ and Lemma \ref{sign} yields 
\begin{equation}\label{w}
\D w+\lambda_na_1(t)w(t)=\lambda_nu_n(t;a_2)(a_2(t)-a_1(t))\ge 0,
 \ w(0)=0,
\end{equation}
where $w(t)=u_n(t;a_1)-u_n(t;a_2).$
Applying Lemma \ref{sign} to the above ODE yields that 
$w\ge 0,$ i.e. $u_n(t;a_1)\ge u_n(t;a_2)\ge0,$ which together with 
assumption \eqref{sign_eigenfunction_derivative} leads to 
$$\sum\limits_{n=1}^\infty u_n(t;a_1)\frac{\partial \phi_n}{\n}(x_0)
\ge\sum\limits_{n=1}^\infty u_n(t;a_2)\frac{\partial \phi_n}{\n}(x_0) 
> 0,\ t\in[0,T].$$
Therefore, with the condition $g>0$ stated in 
Assumption \ref{assumption_inverse} $(d)$, 
$$Ka_1(t)=\frac{g(t)}{\sum\limits_{n=1}^\infty u_n(t;a_1)
\frac{\partial \phi_n}{\n}(x_0)}
\le \frac{g(t)}{\sum\limits_{n=1}^\infty u_n(t;a_2)
\frac{\partial \phi_n}{\n}(x_0)}=Ka_2(t),\ t\in[0,T],$$ 
which completes this proof.
\end{proof}

\subsection{Uniqueness}
\par In order to show the uniqueness, we state two lemmas. 
\begin{lemma}\label{uniqueness_monotone}
 If $a_1,a_2\in \DK$ are both fixed points of $K$ with $a_1\le a_2,$ 
 then $a_1\equiv a_2$.
\end{lemma}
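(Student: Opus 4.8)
The plan is to collapse the two fixed-point relations into a single identity tying $a_2-a_1$ to the difference of the two boundary fluxes, and then to rule out any separation between $a_1$ and $a_2$ by a fractional Gronwall argument anchored at the first time the two coefficients can differ. Throughout write $b:=a_2-a_1\ge 0$, $w_n:=u_n(\cdot;a_1)-u_n(\cdot;a_2)$, and $P_i(t):=\frac{\partial u}{\n}(x_0,t;a_i)=\sum_{n=1}^\infty u_n(t;a_i)\frac{\partial\phi_n}{\n}(x_0)$, which is well defined by Remark \ref{partial derivative of u}. Since $a_1,a_2$ are fixed points, $a_1(t)P_1(t)=g(t)=a_2(t)P_2(t)$, and subtracting gives the key identity $a_1(t)\big(P_1(t)-P_2(t)\big)=b(t)P_2(t)$, that is $a_1\sum_n w_n\frac{\partial\phi_n}{\n}(x_0)=b\,P_2$. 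Because $a_1>0$ and $P_2>0$ on $[0,T]$ (Lemma \ref{well_definedness}), and because $w_n\ge0$ (the monotonicity proof of Theorem \ref{monotonicity} shows $u_n(\cdot;a_1)\ge u_n(\cdot;a_2)\ge0$) while $\frac{\partial\phi_n}{\n}(x_0)\ge0$ by \eqref{sign_eigenfunction_derivative}, this identity shows $b(t)=0$ if and only if $\Phi(t):=\sum_n w_n(t)\frac{\partial\phi_n}{\n}(x_0)=0$. Thus it suffices to prove $\Phi\equiv 0$.

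Next I would argue by contradiction through the first separation time $t_0:=\sup\{t\in[0,T]:a_1\equiv a_2\text{ on }[0,t]\}$. If $t_0=T$ we are done; otherwise $t_0<T$, and by continuity $a_1=a_2$ on $[0,t_0]$. Since the Caputo derivative in \eqref{ODE} is causal, the restriction $u_n(\cdot;a_i)|_{[0,t_0]}$ is, by the uniqueness part of Theorem \ref{existenceuniqueness}, determined solely by $a_i|_{[0,t_0]}$; hence $u_n(\cdot;a_1)=u_n(\cdot;a_2)$, so $w_n\equiv0$ and $b\equiv0$ on $[0,t_0]$. The goal is then to show $b$ must in fact vanish on a whole interval $[t_0,t_0+\delta]$, contradicting the maximality of $t_0$.

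For the quantitative step I would recall the componentwise equation \eqref{w}, $\D w_n+\lambda_n a_1 w_n=\lambda_n u_n(\cdot;a_2)\,b\ge0$ with $w_n(0)=0$, apply $\I$ and Lemma \ref{I_alpha} to obtain $w_n=\lambda_n\I[u_n(\cdot;a_2)\,b]-\lambda_n\I[a_1 w_n]$, and use that $w_n$ and $b$ vanish on $[0,t_0]$ so the integrals start at $t_0$. Substituting $b=a_1\Phi/P_2$ from the key identity, multiplying by $\frac{\partial\phi_n}{\n}(x_0)\ge0$ and summing, the bound $0\le u_n(\cdot;a_2)\le \I F_n+b_n$ from Lemma \ref{itself} together with the convergence of the flux series guaranteed by Remark \ref{partial derivative of u} should yield an estimate of the form $\Phi(t)\le C\int_{t_0}^{t}(t-\tau)^{\alpha-1}\Phi(\tau)\,{\rm d}\tau$ for $t\in[t_0,t_0+\delta]$. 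Writing $M_\delta:=\sup_{[t_0,t_0+\delta]}\Phi$ this gives $M_\delta\le C\,\delta^{\alpha}M_\delta$, so choosing $\delta$ with $C\delta^{\alpha}<1$ forces $M_\delta=0$; hence $\Phi\equiv0$ and $b\equiv0$ on $[t_0,t_0+\delta]$, the desired contradiction, and therefore $a_1\equiv a_2$.

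The main obstacle is precisely this Gronwall-type estimate, and it is subtle because of the eigenvalue weights. The honest difficulty is that the term $\sum_n\lambda_n\frac{\partial\phi_n}{\n}(x_0)\,\I[u_n(\cdot;a_2)\,b]$, obtained by naively discarding the damping $-\lambda_n\I[a_1 w_n]$, need not converge: it is essentially the normal flux of $-\L u$, which the available $H^3$-regularity does not control. In the true solution the damping enforces the quasi-static balance $\lambda_n a_1 w_n\approx \lambda_n u_n(\cdot;a_2)\,b$, so the $\lambda_n$ factors cancel and the flux series $\Phi$ itself converges (consistently with the key identity). The technical heart of the proof is thus to retain enough of the damping term — rather than bounding it away — so that the interchange of $\sum_n$ and $\I$ is legitimate and the kernel constant $C$ is finite; once this is done, the fractional-integral contraction and the propagation of equality past $t_0$ are routine.
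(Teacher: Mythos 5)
Your reduction to showing $\Phi:=\sum_n w_n\frac{\partial\phi_n}{\partial\vec n}(x_0)\equiv0$ via the identity $a_1\Phi=(a_2-a_1)P_2$ is sound, but the quantitative step that is supposed to finish the proof is a genuine gap, and you essentially concede it yourself. To get $\Phi(t)\le C\int_{t_0}^t(t-\tau)^{\alpha-1}\Phi(\tau)\,{\rm d}\tau$ from $w_n=\lambda_n\I[u_n(\cdot;a_2)b]-\lambda_n\I[a_1w_n]$ you must, after discarding the nonpositive damping term, control $\sum_n\lambda_n u_n(\tau;a_2)\frac{\partial\phi_n}{\partial\vec n}(x_0)$ uniformly in $\tau$; this is the normal flux of $-\L u$, which needs $u(\cdot,\tau;a_2)\in H^5(\Omega)$ and is not available under Assumption 4.2 (only $H^3$). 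Your closing sentence announces that the ``technical heart'' is to retain enough of the damping so the interchange of $\sum_n$ and $\I$ is legitimate and $C$ is finite, but no mechanism for doing so is given, so the contraction and hence the whole continuation argument past $t_0$ is unproven.

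The paper's proof sidesteps this entirely and needs no Gronwall inequality, no first-separation time, and no contradiction. It applies $\I$ to the fixed-point relation $a\sum_n u_n(t;a)\frac{\partial\phi_n}{\partial\vec n}(x_0)=g$ to get $\sum_n\I[a\,u_n]\frac{\partial\phi_n}{\partial\vec n}(x_0)=\I g$, and separately applies $\I$ to the component ODE to get the exact identity $\I[a\,u_n]=\lambda_n^{-1}(\I F_n+b_n-u_n)$. Combining these, $\sum_n\lambda_n^{-1}u_n(t;a)\frac{\partial\phi_n}{\partial\vec n}(x_0)$ equals an expression depending only on the data $(F,u_0,g)$ — note the weights are now $\lambda_n^{-1}$, so convergence is free rather than problematic. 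Subtracting the identities for $a_1$ and $a_2$ gives a sum of nonnegative terms equal to zero, hence $u_N(\cdot;a_1)\equiv u_N(\cdot;a_2)$ for the distinguished index $N$; plugging $w\equiv0$ back into \eqref{w} and using $u_N(\cdot;a_2)>0$ yields $a_1=a_2$ directly. In short, the damping term you were trying to ``retain'' is eliminated exactly by the fixed-point relation itself, and the sign structure does the rest; your route as written does not close.
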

\begin{proof}
 \par Pick a fixed point $a(t),$ then 
 $$a(t)\sum\limits_{n=1}^\infty u_n(t;a)
\frac{\partial \phi_n}{\n}(x_0)
=\sum\limits_{n=1}^\infty a(t)u_n(t;a)
\frac{\partial \phi_n}{\n}(x_0)=g(t),$$
which gives 
\begin{equation}\label{equality_3}
\sum\limits_{n=1}^\infty \I[a(t)u_n(t;a)]
\frac{\partial \phi_n}{\n}(x_0)=\I g
\end{equation}
by taking $\I$ on both sides.
Similarly, taking $\I$ on the both sides of \eqref{ODE}
and applying Lemma \ref{I_alpha} yield that 
\begin{equation*}\label{equality_4}
 \I[a(t)u_n(t;a)]=\lambda_n^{-1}\I F_n+\lambda_n^{-1}b_n
-\lambda_n^{-1}u_n(t;a),\ \N+,
\end{equation*}
which together with \eqref{equality_3} generates 
\begin{equation}\label{equality_5}
 \sum\limits_{n=1}^\infty \lambda_n^{-1}u_n(t;a)
 \frac{\partial \phi_n}{\n}(x_0)
 =\sum\limits_{n=1}^\infty \lambda_n^{-1}(\I F_n+b_n)
 \frac{\partial \phi_n}{\n}(x_0)-\I g.
\end{equation}
In \eqref{equality_5}, the convergence of the two series in $C[0,T]$ is supported by Assumption \ref{assumption_inverse}, Remark \ref{partial derivative of u} and the fact that $0<\lambda_1\le\lambda_2\le\cdots.$ 

\par Given two fixed points $a_1,a_2$ with $a_1\le a_2,$ then $a_1$ and $a_2$ 
should satisfy \eqref{equality_5} simultaneously, which gives 
\begin{equation}\label{equality_6}
 \sum\limits_{n=1}^\infty \lambda_n^{-1}
 \frac{\partial \phi_n}{\n}(x_0)(u_n(t;a_1)-u_n(t;a_2))=0.
\end{equation}
In the proof of Theorem \ref{monotonicity}, we have shown that  
$u_n(t;a_1)\ge u_n(t;a_2)\ge0$. Also recall that 
$\lambda_n^{-1} \frac{\partial \phi_n}{\n}(x_0)\ge 0,\ \N+,$ 
then $\lambda_n^{-1}\frac{\partial \phi_n}{\n}(x_0)
(u_n(t;a_1)-u_n(t;a_2))\ge 0$ on $[0,T]$ for $\N+.$
Hence, \eqref{equality_6} implies that 
$$\lambda_n^{-1}\frac{\partial \phi_n}{\n}(x_0)
(u_n(t;a_1)-u_n(t;a_2))= 0,\ t\in[0,T],\ \N+.$$
Let $n=N,$ $\lambda_N^{-1}\frac{\partial \phi_N}{\n}(x_0)>0$
gives $u_N(t;a_1)\equiv u_N(t;a_2)$ on $[0,T].$
Set $w(t)=u_N(t;a_1)-u_N(t;a_2)=0.$ Then \eqref{w} yields that 
$$
0={\D} w+\lambda_Na_1(t)w(t)=\lambda_Nu_N(t;a_2)(a_2(t)-a_1(t)),
$$
i.e. $u_N(t;a_2)(a_2(t)-a_1(t))\equiv 0$ on $[0,T]$; while the proof of Lemma \ref{well_definedness} yields that $u_N(t;a_2)>0.$ Hence, we have  
$a_1= a_2$ on $[0,T],$ which completes the proof.
\end{proof}

\par Before showing uniqueness, we introduce a successive iteration procedure 
which will generate a sequence converging to a fixed point if it exists. Set 
$$\overline{a}_0(t)=g(t)\Big[\frac{\partial u_0}{\n}(x_0)
+ \I[\frac{\partial F}{\n}(x_0,t)]\Big]^{-1},\ 
\overline{a}_{n+1}=K\overline{a}_n,\ \NN.$$ Then this iteration reproduces 
a sequence $\{\overline{a}_n:\NN\}$ which is contained by 
$\DK$ due to Lemma \ref{itself}.

\begin{lemma}\label{uniqueness_series}
 If there exists a fixed point $a(t)\in \DK$ of operator $K,$ then the 
 sequence $\{\overline{a}_n:\NN\}$ will converge to $a(t).$
\end{lemma}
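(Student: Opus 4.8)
The plan is to exploit the order structure coming from the monotonicity of $K$ (Theorem \ref{monotonicity}) together with the fact that the seed $\overline{a}_0$ is, by construction, the smallest element of $\DK$: it is exactly the lower bound displayed in the definition of $\DK$, so $\overline{a}_0\le\psi$ for every $\psi\in\DK$, and in particular $\overline{a}_0\le a$. Writing $\overline{a}_m$ for the iterates (relabelling the index $n$ of the statement as $m$, to keep $n$ for the spectral expansion), I would first show by induction that $\{\overline{a}_m\}$ is monotone increasing and bounded above by the fixed point $a$. Indeed $\overline{a}_1=K\overline{a}_0\in\DK$ by Lemma \ref{itself}, whence $\overline{a}_1\ge\overline{a}_0$; applying $K$ and using its monotonicity propagates $\overline{a}_m\le\overline{a}_{m+1}$. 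For the upper bound, $\overline{a}_0\le a$ together with $Ka=a$ gives, inductively, $\overline{a}_m\le a\Rightarrow\overline{a}_{m+1}=K\overline{a}_m\le Ka=a$. Hence for each fixed $t$ the real sequence $\{\overline{a}_m(t)\}$ is increasing and bounded, so it converges to a limit $\overline{a}(t)$ with $\overline{a}_0\le\overline{a}\le a$ on $[0,T]$.

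The heart of the argument is to identify $\overline{a}$ as a fixed point of $K$, i.e.\ to pass to the limit in $\overline{a}_{m+1}(t)=g(t)/S_m(t)$, where $S_m(t)=\sum_{n=1}^\infty u_n(t;\overline{a}_m)\frac{\partial\phi_n}{\n}(x_0)$. I would do this with two applications of dominated convergence. First, for each fixed spectral index $n$, since $\overline{a}_m\uparrow\overline{a}$ and the map $\psi\mapsto u_n(t;\psi)$ is decreasing (this is precisely the comparison $w\ge0$ established in the proof of Theorem \ref{monotonicity}), the numbers $u_n(t;\overline{a}_m)$ decrease to some $\ell_n(t)\ge0$. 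Taking $\I$ in \eqref{ODE} and using Lemma \ref{I_alpha} recasts the fractional ODE as the Volterra identity $u_n(t;\overline{a}_m)+\lambda_n\I[\overline{a}_m u_n(\cdot;\overline{a}_m)]=b_n+\I F_n$; since $0\le\overline{a}_m u_n(\cdot;\overline{a}_m)$ is dominated by a fixed multiple of the continuous function $u_n(\cdot;\overline{a}_0)$, letting $m\to\infty$ shows that $\ell_n$ solves the same Volterra identity with coefficient $\overline{a}$, and the uniqueness in Lemma \ref{kilbassrivastavatrujillo} forces $\ell_n=u_n(\cdot;\overline{a})$. Second, to interchange the limit with the sum over $n$, I use that every summand $u_n(t;\overline{a}_m)\frac{\partial\phi_n}{\n}(x_0)$ is nonnegative (by \eqref{sign_eigenfunction_derivative} and $u_n\ge0$) and decreasing in $m$, dominated by the $n$-th term of the convergent series $S_0(t)$, which is finite by Remark \ref{partial derivative of u}; dominated convergence on the counting measure then yields $S_m(t)\downarrow S(t):=\sum_{n=1}^\infty u_n(t;\overline{a})\frac{\partial\phi_n}{\n}(x_0)$. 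As $S(t)\ge u_N(t;\overline{a})\frac{\partial\phi_N}{\n}(x_0)>0$ — the strict positivity of $u_N(\cdot;\overline{a})$ being exactly the maximum‑principle argument of Lemma \ref{well_definedness}, which only uses $b_N,F_N>0$ — I conclude $\overline{a}(t)=\lim_m g(t)/S_m(t)=g(t)/S(t)=K\overline{a}(t)$.

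Having produced two fixed points with $\overline{a}\le a$, I would invoke Lemma \ref{uniqueness_monotone} to conclude $\overline{a}\equiv a$, so the iteration converges pointwise to $a$. To upgrade this to convergence in $C[0,T]$, I would then apply Dini's theorem: $\{\overline{a}_m\}$ is a monotone sequence of continuous functions converging pointwise to the continuous limit $a$ on the compact interval $[0,T]$, hence the convergence is uniform.

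I expect the main obstacle to be the step that feeds the limit back into Lemma \ref{uniqueness_monotone}: a priori the monotone pointwise limit $\overline{a}$ is only lower semicontinuous, whereas the series representation \eqref{equality_5} and the differentiated relation \eqref{w} underlying that lemma were derived for coefficients in $C^+[0,T]$ (they rely on the extremal principle and on Lemma \ref{I_alpha}, both of which require continuity of $\D u_n$). The clean way around this is to secure the continuity of $\overline{a}$ before appealing to uniqueness: once the fixed‑point identity $\overline{a}=g/S$ with $S(\cdot)=\frac{\partial u}{\n}(x_0,\cdot;\overline{a})$ is in hand, continuity of $\overline{a}$ follows from that of $g$ together with the $C([0,T];H^3)\hookrightarrow C([0,T];C^1)$ regularity of Remark \ref{partial derivative of u}, placing $\overline{a}\in\DK$ and closing the bootstrap. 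Establishing this continuity — equivalently, extracting a uniform‑in‑$m$ modulus of continuity for the $\overline{a}_m$, which would let one replace the two‑step dominated‑convergence passage by an Arzel\`a--Ascoli argument — is the delicate point; everything else is bookkeeping resting on the monotonicity and positivity already proved.
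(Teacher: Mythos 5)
Your proposal follows the same route as the paper: the iterates form an increasing sequence bounded above by the fixed point $a$ (via the lower-bound property of $\overline{a}_0$, Lemma \ref{itself} and Theorem \ref{monotonicity}), the limit is itself a fixed point, and Lemma \ref{uniqueness_monotone} then forces that limit to equal $a$. You are in fact more careful than the paper, which simply asserts that the monotone bounded sequence ``is convergent in $\DK$'' and that its limit is a fixed point; the passage to the limit in $K$ via the Volterra identity and dominated convergence, the Dini upgrade to uniform convergence, and the continuity of the pointwise limit that you flag as the delicate point are all details the paper's own proof leaves implicit.
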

\begin{proof}
 \par  $\overline{a}_0$ is the lower bound of $\DK$ and 
 $\{\overline{a}_n:\NN\}\subset\DK$ yield that 
 $\overline{a}_0\le \overline{a}_1.$ Using Theorem \ref{monotonicity}, 
 we have $\overline{a}_1=K\overline{a}_0\le K\overline{a}_1=\overline{a}_2,$ 
 i.e. $\overline{a}_1\le \overline{a}_2.$ The same argument 
 gives $\overline{a}_2=K\overline{a}_1\le K\overline{a}_2=
 \overline{a}_3.$ Continue this process, we can deduce 
 $\overline{a}_0\le\overline{a}_1\le\overline{a}_2\le \dots,$ 
 which means $\{\overline{a}_n:\NN\}$ is increasing. 
 Since the results that $\overline{a}_0$ is the lower bound of $\DK$ 
 and $a(t)\in \DK,$ it holds $\overline{a}_0\le a.$ 
 Applying Theorem \ref{monotonicity} 
 to this inequality, we obtain $\overline{a}_1=K\overline{a}_0\le Ka=a,$ 
 i.e. $\overline{a}_1\le a.$
 This argument generates 
  $\overline{a}_n\le a,\ \NN,$ which means $a(t)$ is an upper bound 
 of $\{\overline{a}_n:\NN\}.$
 
 \par We have proved $\{\overline{a}_n:\NN\}$ is an increasing sequence 
 in $\DK$ with an upper bound $a(t),$ which leads to $\{\overline{a}_n:\NN\}$
 is convergent in $\DK$ and the limit is smaller than $a(t).$ 
 Denote the limit of $\{\overline{a}_n:\NN\}$ by $\overline{a}$. 
 We have $\overline{a}\in \DK,$ $\overline{a}\le a$ and 
 $\overline{a}$ is a fixed point of $K$ in $\DK.$ 
 Hence, Lemma \ref{uniqueness_monotone} yields 
 $\overline{a}=a,$ which is the desired result. 
 \end{proof}

 \par Now, we are able to prove the uniqueness of fixed points of $K$.
 \begin{theorem}[Uniqueness]\label{uniqueness}
 There is at most one fixed point of $K$ in $\DK.$
\end{theorem}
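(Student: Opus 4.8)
The plan is to leverage the convergence result of Lemma \ref{uniqueness_series} rather than to compare two fixed points head-on. The subtlety is that two arbitrary fixed points $a_1,a_2\in\DK$ need not be comparable, so Lemma \ref{uniqueness_monotone}, which requires $a_1\le a_2$, cannot be invoked directly. The observation that circumvents this difficulty is that the iteration sequence $\{\overline{a}_n:\NN\}$ is defined intrinsically from the data $g$, $u_0$ and $F$ (its initial term $\overline{a}_0$ is the fixed lower bound of $\DK$), and therefore does \emph{not} depend on any particular fixed point.

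First I would suppose that $K$ admits two fixed points $a_1,a_2\in\DK$. Since $a_1$ is a fixed point of $K$ in $\DK$, Lemma \ref{uniqueness_series} applies with the choice $a=a_1$ and yields that the sequence $\{\overline{a}_n:\NN\}$ converges to $a_1$. Applying the same lemma a second time with $a=a_2$ shows that the very same sequence $\{\overline{a}_n:\NN\}$ converges to $a_2$.

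Finally, since $\{\overline{a}_n:\NN\}$ is a single sequence in $C^{+}[0,T]$ and limits in the uniform topology are unique, I would conclude that $a_1\equiv a_2$ on $[0,T]$. Hence $K$ has at most one fixed point in $\DK$, which is the assertion.

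The main (and essentially only) obstacle here is conceptual rather than computational: one must recognize that the monotone iteration $\{\overline{a}_n:\NN\}$ never references a fixed point, so that Lemma \ref{uniqueness_series} pins every fixed point to the common limit of this single fixed sequence. All of the genuine work—the monotonicity of $K$ (Theorem \ref{monotonicity}), the sandwiching $\overline{a}_0\le a$ together with $\overline{a}_n\le a$ for any fixed point $a$, and the comparable-case uniqueness of Lemma \ref{uniqueness_monotone}—has already been absorbed into the proof of Lemma \ref{uniqueness_series}. Consequently the proof of uniqueness reduces to invoking that lemma twice and appealing to the uniqueness of limits in $C[0,T]$.
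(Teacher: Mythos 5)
Your proposal is correct and is essentially identical to the paper's own proof: both apply Lemma \ref{uniqueness_series} to each of the two hypothetical fixed points, observe that the single data-determined sequence $\{\overline{a}_n:\NN\}$ must converge to both, and conclude by uniqueness of limits. Your additional remark explaining why the sequence is independent of the chosen fixed point is a helpful clarification but does not change the argument.
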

\begin{proof}
 \par Let $a_1, a_2\in \DK$ be both fixed points of $K.$ 
 Lemma \ref{uniqueness_series} implies that 
 $\overline{a}_n\to a_1$ and $\overline{a}_n\to a_2,$ which 
 leads to $a_1=a_2$ and completes this proof.
\end{proof}

\subsection{Existence}
\par Assumption \ref{assumption_inverse} is not sufficient to deduce the 
existence of the fixed points of $K$ since $\DK$ has 
no upper bound so that an increasing  sequence in $\DK$ may not be convergent.
In this part, we discuss the existence of fixed points, by providing  
some extra conditions.
\begin{assumption}\label{assumption_inverse_existence}
Additional assumptions on $u_0$, $F$ and $g$:
\begin{itemize}
 \item [(a)] $-\L u_0\in H^3(\Omega)\cap H_0^1(\Omega);$
 \item [(b)] $F(x,t)=-\L u_0(x)\cdot f(t)$ s.t. $f\in C^\theta[0,T], 0<\theta<1$ and $f(t)\ge g(t)
 \big[\frac{\partial u_0}{\n}(x_0)\big]^{-1}$ on $[0,T].$
\end{itemize}
\end{assumption}

\begin{remark}
Assumption \ref{assumption_inverse_existence} is set up to make sure that 
$F(x,t)=-\L u_0(x)\cdot f(t)\in C^\theta([0,T];H^3(\Omega)\cap H_0^1(\Omega))$, so that 
$F(x,t)$ also satisfies Assumption \ref{assumption_inverse}.

Fix $u_0$ and $f,$ if the measured data $g$ does not satisfy 
Assumption \ref{assumption_inverse_existence} $(b),$ then we can modify 
$u_0$ by increasing the value of $u_0$ in a very small neighborhood 
of the point $x_0$ so that the value of $\frac{\partial u_0}{\n}(x_0)$ 
becomes larger. Meanwhile, since $u_0$ is changed in a small domain, 
the coefficients $\{b_n:\N+\}$ only vary slightly, so do $u_n(t;a)$ and $u(x,t;a).$ 
Hence, $\frac{\partial u}{\n}(x_0,t;a)$ and $g(t)$ will not appear 
a significant change that can violate Assumption 
\ref{assumption_inverse_existence} $(b).$
\end{remark}

\par Define the subspace $\DK'$ of $\DK$ as 
\begin{equation*}
\begin{split}
 \DK':=\Big\{\psi\in C^{+}[0,T]:\ & g(t)\Big[\frac{\partial u_0}{\n}(x_0)
+ \I[\frac{\partial F}{\n}(x_0,t)]\Big]^{-1}\\
&\le \psi(t)\le g(t)\Big[\frac{\partial u_0}{\n}(x_0)\Big]^{-1},
\ t\in[0,T]\Big\}.
\end{split}
\end{equation*} 
We have proved the lower bound of $\DK'$ is positive in Remark 
 \ref{remark_assumption_inverse} and clearly the upper bound 
 of $\DK'$ is larger than the lower bound. Consequently, 
 $\DK'$ is well-defined. 
 
\par The next lemma concerns the range of $K$ with domain $\DK'$.
\begin{lemma}\label{itself_existence}
 With Assumptions \ref{assumption_inverse} and 
 \ref{assumption_inverse_existence}, K maps $\DK'$ into $\DK'$.
\end{lemma}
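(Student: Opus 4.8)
The plan is to verify the two defining inequalities of $\DK'$ separately. Since $\DK'\subset\DK$, Lemma \ref{itself} immediately gives that $K\psi$ is continuous, strictly positive, and satisfies the lower bound $K\psi(t)\ge g(t)\big[\frac{\partial u_0}{\n}(x_0)+\I[\frac{\partial F}{\n}(x_0,t)]\big]^{-1}$ for every $\psi\in\DK'$. Thus the whole task reduces to establishing the upper bound $K\psi(t)\le g(t)\big[\frac{\partial u_0}{\n}(x_0)\big]^{-1}$, and since $g>0$ this is equivalent to the flux estimate
$$\frac{\partial u}{\n}(x_0,t;\psi)=\sum_{n=1}^\infty u_n(t;\psi)\frac{\partial \phi_n}{\n}(x_0)\ge \frac{\partial u_0}{\n}(x_0)=\sum_{n=1}^\infty b_n\frac{\partial \phi_n}{\n}(x_0).$$
Because $\frac{\partial \phi_n}{\n}(x_0)\ge 0$ by \eqref{sign_eigenfunction_derivative}, it suffices to prove the termwise comparison $u_n(t;\psi)\ge b_n$ on $[0,T]$ for each $\N+$.

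The crucial point is that the separable structure of $F$ in Assumption \ref{assumption_inverse_existence} yields $F_n(t)=(F(\cdot,t),\phi_n)=f(t)(-\L u_0,\phi_n)=\lambda_n b_n f(t)$. Setting $v_n(t)=u_n(t;\psi)-b_n$, I would use that the Caputo derivative annihilates constants, so $\D v_n=\D u_n(\cdot;\psi)$, and combine this with the fractional ODE \eqref{ODE} to compute
$$\D v_n+\lambda_n\psi(t)v_n=F_n(t)-\lambda_n\psi(t)b_n=\lambda_n b_n\big(f(t)-\psi(t)\big),\quad v_n(0)=0.$$
The right-hand side is nonnegative: $b_n\ge 0$ by Assumption \ref{assumption_inverse} $(a)$, while the upper bound in the definition of $\DK'$ gives $\psi(t)\le g(t)\big[\frac{\partial u_0}{\n}(x_0)\big]^{-1}\le f(t)$, the last inequality being exactly Assumption \ref{assumption_inverse_existence} $(b)$. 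Applying Corollary \ref{sign_eigenfunction} (the ``$\ge$'' case) to this $w$-type equation, with $v_n(0)\ge 0$ and $\D v_n+\lambda_n\psi v_n\ge 0$, then forces $v_n\ge 0$, i.e. $u_n(t;\psi)\ge b_n$.

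With the termwise bound in hand, summing against $\frac{\partial \phi_n}{\n}(x_0)\ge 0$ — the convergence of both series in $C[0,T]$ being guaranteed by Remark \ref{partial derivative of u} — delivers the flux estimate above, and inverting it together with $g>0$ produces the sought upper bound on $K\psi$. Combined with the continuity, positivity, and lower bound inherited from Lemma \ref{itself}, this shows $K\psi\in\DK'$. The only genuinely new ingredient beyond Lemma \ref{itself} is the comparison $u_n\ge b_n$, and the main obstacle is recognizing that it hinges entirely on the separable form $F=-\L u_0\cdot f$ (so that $F_n=\lambda_n b_n f$) together with the compatibility inequality $f\ge g\big[\frac{\partial u_0}{\n}(x_0)\big]^{-1}\ge\psi$, and on the fact that $\D$ kills the constant $b_n$, which leaves a clean sign-definite source term to which the comparison principle applies.
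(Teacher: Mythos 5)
Your proposal is correct and follows essentially the same route as the paper: reduce to the upper bound via Lemma \ref{itself}, set $w_n=u_n(t;\psi)-b_n$, use the separable form $F_n=\lambda_n b_n f$ and $\psi\le g\big[\frac{\partial u_0}{\n}(x_0)\big]^{-1}\le f$ to get a nonnegative source, and apply Corollary \ref{sign_eigenfunction} to conclude $u_n\ge b_n$. The only difference is that you spell out the computation $F_n=\lambda_n b_n f$ and the role of the Caputo derivative killing constants, which the paper leaves as a ``direct calculation.''
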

\begin{proof}
 \par Given $\psi \in \DK'$, we have proved 
 $K \psi \in C^{+}[0,T]$ and 
 $$K\psi(t) \ge g(t)\Big[\frac{\partial u_0}{\n}(x_0)
+ \I[\frac{\partial F}{\n}(x_0,t)]\Big]^{-1},\ t\in[0,T]$$ 
in the proof of Lemma \ref{itself}, so that 
it is sufficient to show $K\psi \le 
g(t)\big[\frac{\partial u_0}{\n}(x_0)\big]^{-1}$ on $[0,T].$

\par For each $\N+,$ let $w_n(t;\psi)=u_n(t;\psi)-b_n$, 
\eqref{ODE} yields the following ODE by direct calculation
$$
\D w_n(t;\psi)+\lambda_n \psi(t)w_n(t;\psi)=\lambda_nb_n(f(t)-\psi(t))\ge0,
\ w_n(0,\psi)=0,
$$
where $\lambda_nb_n(f(t)-\psi(t))\ge 0$ follows from 
the fact $\psi(t) \le g(t)\big[\frac{\partial u_0}{\n}(x_0)\big]^{-1}$ 
and Assumption \ref{assumption_inverse_existence} $(b)$.
Applying Corollary \ref{sign_eigenfunction} to the above ODE gives 
$w_n(t;\psi)\ge 0,$ i.e. $u_n(t;\psi)\ge b_n\ge 0$ on $[0,T].$ 
Hence, 
$$
K\psi(t)=\frac{g(t)}{\sum\limits_{n=1}^\infty u_n(t;\psi)
\frac{\partial \phi_n}{\n}(x_0)} \le 
\frac{g(t)}{\sum\limits_{n=1}^\infty b_n\frac{\partial \phi_n}{\n}(x_0)}
=g(t)\big[\frac{\partial u_0}{\n}(x_0)\big]^{-1}
$$
and this proof is complete.
\end{proof}

\par The existence conclusion is derived from Lemmas \ref{uniqueness_series} 
and \ref{itself_existence}.
\begin{theorem}[Existence]\label{existence}
\par Suppose Assumptions \ref{assumption_inverse} and 
\ref{assumption_inverse_existence} be valid, then there exists a 
fixed point of $K$ in $\DK'.$
\end{theorem}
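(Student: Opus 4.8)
The plan is to mimic the monotone-iteration argument of Lemma \ref{uniqueness_series}, but now to harvest the upper bound built into $\DK'$ --- rather than a hypothetical fixed point --- to force the iterates to converge, and then to pass to the limit to produce a genuine fixed point of $K$ inside $\DK'$. The seed $\overline{a}_0$ of the iteration is precisely the lower bound of $\DK'$, hence $\overline{a}_0\in\DK'$, so by Lemma \ref{itself_existence} the whole orbit $\{\overline{a}_n:\NN\}$ stays in $\DK'$. The monotonicity part of the proof of Lemma \ref{uniqueness_series} then applies verbatim: since $\overline{a}_1=K\overline{a}_0\in\DK'$ has $\overline{a}_0$ as a lower bound we get $\overline{a}_0\le\overline{a}_1$, and an induction through Theorem \ref{monotonicity} yields $\overline{a}_0\le\overline{a}_1\le\overline{a}_2\le\cdots$.

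The essential gain over Lemma \ref{uniqueness_series} is that I no longer need a fixed point to bound the sequence from above: membership in $\DK'$ already forces $\overline{a}_n(t)\le g(t)\big[\frac{\partial u_0}{\n}(x_0)\big]^{-1}$ for every $n$ and every $t$. Thus for each fixed $t$ the scalar sequence $\overline{a}_n(t)$ is nondecreasing and bounded above, so it has a pointwise limit $\overline{a}(t)$, which still obeys the two bounds defining $\DK'$. It remains to show that $\overline{a}$ is continuous and satisfies $K\overline{a}=\overline{a}$, and this is the crux of the whole argument.

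To that end, note that the bounds of $\DK'$ supply uniform constants $0<q_*\le\overline{a}_n(t)\le Q_*<\infty$ independent of $n$ (the lower and upper bounds of $\DK'$ are continuous and strictly positive on the compact $[0,T]$). Since the constants in the $H^3$-regularity estimates of Remark \ref{partial derivative of u} depend on the coefficient only through its positive lower bound, those estimates hold uniformly along the orbit and bound $\|u(\cdot,t;\overline{a}_n)\|_{H^3(\Omega)}$ uniformly in $n$ and $t$; combined with the time-continuity carried by $\D u\in C([0,T];L^2(\Omega))$ this makes the denominators $D(t;\overline{a}_n):=\frac{\partial u}{\n}(x_0,t;\overline{a}_n)=\sum_{m}u_m(t;\overline{a}_n)\frac{\partial\phi_m}{\n}(x_0)$ uniformly bounded and uniformly equicontinuous in $t$, and they stay bounded below by $b_N\frac{\partial\phi_N}{\n}(x_0)>0$ via $u_m\ge b_m$ from the proof of Lemma \ref{itself_existence}. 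Hence the iterates, being the quotients $g/D(\cdot;\overline{a}_n)$, form an equicontinuous and uniformly bounded family; Arzel\`a--Ascoli together with monotonicity upgrades the pointwise convergence to convergence in $C[0,T]$, so that $\overline{a}\in C^{+}[0,T]$ and $\overline{a}\in\DK'$.

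Finally I would pass to the limit in $\overline{a}_{n+1}=K\overline{a}_n$. The uniform bound $0\le u_m(t;\overline{a}_n)\le\I F_m+b_m$ from the proof of Lemma \ref{itself}, dominated by the convergent series $\sum_m(\I F_m+b_m)\frac{\partial\phi_m}{\n}(x_0)=\frac{\partial u_0}{\n}(x_0)+\I[\frac{\partial F}{\n}(x_0,t)]$ of Remark \ref{remark_assumption_inverse}, lets me interchange limit and summation once I know, by continuous dependence of each fixed-index solution $u_m(\cdot;\psi)$ on $\psi$ (a Volterra/fractional-Gr\"onwall estimate for \eqref{ODE}) together with the uniform convergence $\overline{a}_n\to\overline{a}$, that $u_m(\cdot;\overline{a}_n)\to u_m(\cdot;\overline{a})$. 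Consequently $D(\cdot;\overline{a}_n)\to D(\cdot;\overline{a})$ and $K\overline{a}_n\to K\overline{a}$, while $K\overline{a}_n=\overline{a}_{n+1}\to\overline{a}$; therefore $K\overline{a}=\overline{a}$, exhibiting $\overline{a}$ as a fixed point of $K$ in $\DK'$. I expect the genuine difficulty to be exactly the compactness step of the third paragraph --- verifying the uniform-in-$n$ regularity and $t$-equicontinuity of the denominators $D(\cdot;\overline{a}_n)$ so that the pointwise monotone limit is in fact a continuous element of $\DK'$ --- since everything else is a transcription of the monotone iteration already used for uniqueness.
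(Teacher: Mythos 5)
Your proposal follows exactly the paper's route: seed the iteration at the lower bound of $\DK'$, use Lemma \ref{itself_existence} to keep the orbit in $\DK'$, invoke the monotonicity argument of Lemma \ref{uniqueness_series} to get an increasing sequence, and use the upper bound $g(t)\big[\frac{\partial u_0}{\n}(x_0)\big]^{-1}$ of $\DK'$ to force convergence to a fixed point. The only difference is that you are considerably more careful than the paper's own proof, which simply asserts that the limit is ``clearly'' a fixed point lying in $\DK'$; your third and fourth paragraphs (equicontinuity of the denominators, uniform convergence via Arzel\`a--Ascoli, and continuous dependence of $u_m(\cdot;\psi)$ on $\psi$ to pass to the limit in $K$) supply precisely the justification the paper leaves implicit.
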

\begin{proof}
 \par Lemma \ref{uniqueness_series} yields the sequence 
 $\{\overline{a}_n:\NN\}$ is increasing, while Lemma \ref{itself_existence} 
 gives $\{\overline{a}_n:\NN\}\subset \DK'.$ 
 Then $\{\overline{a}_n:\NN\}$ is an increasing sequence with an
 upper bound $g(t)\big[\frac{\partial u_0}{\n}(x_0)\big]^{-1},$ 
 which implies the convergence of $\{\overline{a}_n:\NN\}.$ 
 Denote the limit by $\overline{a},$ clearly $\overline{a}$ 
 is a fixed point of $K.$ Also, the closedness of $\DK'$ 
 yields that $\overline{a}\in \DK'.$  
 Therefore, $\overline{a}$ is a fixed point of $K$ in $\DK',$ which 
 confirms the existence.
\end{proof}

\subsection{Main theorem for the inverse problem and reconstruction algorithm}

\par Lemma \ref{uniqueness_series}, Theorems \ref{uniqueness} and 
\ref{existence} allow us to deduce the main theorem for this 
inverse problem.

\begin{theorem}[Main theorem for the inverse problem]\label{main_inverse}
 Suppose Assumption \ref{assumption_inverse} holds. 
 \begin{itemize}
  \item [(a)] If there exists a fixed point of K in $\DK,$ then it is 
  unique and coincides with the limit of $\{\overline{a}_n:\NN\};$ 
  \item [(b)] If Assumption \ref{assumption_inverse_existence} is also valid, 
  then there exists a unique fixed point of K in $\DK',$ which is the 
  limit of $\{\overline{a}_n:\NN\}.$
 \end{itemize}
\end{theorem}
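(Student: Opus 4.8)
The plan is to assemble this theorem directly from the three results already established, namely Lemma \ref{uniqueness_series}, Theorem \ref{uniqueness} and Theorem \ref{existence}; no new estimate is required, and the only care needed is to keep straight which statement lives in $\DK$ and which in $\DK'$.

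For part (a), I would argue as follows. Assume a fixed point of $K$ exists in $\DK$. Theorem \ref{uniqueness} asserts that $K$ has at most one fixed point in $\DK$, so this fixed point is automatically unique. Lemma \ref{uniqueness_series} then guarantees that whenever a fixed point exists, the iteration sequence $\{\overline{a}_n:\NN\}$ converges to it. Combining the two facts shows that the unique fixed point coincides with $\lim_{n\to\infty}\overline{a}_n$, which is precisely the assertion of (a).

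For part (b), I would first invoke Theorem \ref{existence}: under the additional Assumption \ref{assumption_inverse_existence}, a fixed point of $K$ exists in $\DK'$. Since $\DK'$ was constructed as a subspace of $\DK$, such a fixed point is in particular a fixed point in $\DK$, so Theorem \ref{uniqueness} forces it to be the unique fixed point in $\DK$, and a fortiori the unique one in $\DK'$. Applying Lemma \ref{uniqueness_series} once more, the sequence $\{\overline{a}_n:\NN\}$ converges to this fixed point, completing (b).

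I expect no genuine obstacle here: every ingredient has been proved in the preceding subsections, and the theorem is essentially a bookkeeping synthesis of them. The single point that deserves explicit mention is the interface between the two admissible sets in part (b) — one must record that $\DK'\subset\DK$, so that the uniqueness statement of Theorem \ref{uniqueness}, which is phrased over $\DK$, indeed applies to the fixed point produced by Theorem \ref{existence} inside $\DK'$.
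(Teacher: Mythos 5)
Your proposal is correct and matches the paper exactly: the paper derives Theorem \ref{main_inverse} by citing precisely Lemma \ref{uniqueness_series}, Theorem \ref{uniqueness} and Theorem \ref{existence}, with part (a) following from the latter two applied in $\DK$ and part (b) from adding Theorem \ref{existence} together with the inclusion $\DK'\subset\DK$. Your explicit remark about that inclusion is the only nontrivial bookkeeping point, and you have handled it as the paper implicitly does.
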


\par The following reconstruction algorithm for $a(t)$ is based on 
Theorem \ref{main_inverse}.
\begin{table}[h!]
\caption{Numerical Algorithm}\label{algorithm}
\begin{tabular*}{16cm}{l}
     \hline
     Iteration algorithm to recover 
     the coefficient $a(t)$  \\
     \hline
1: Set up the right-hand side function $F(x,t)$ and the initial 
condition $u_0(x)$,\\
then measure the output flux data $g(t).$ 
$F$, $u_0$ and $g$ should satisfy Assumption \ref{assumption_inverse};\\
2: Set the initial guess as 
$\overline{a}_0(t)=g(t)\Big[\frac{\partial u_0}{\n}(x_0)
+ \I[\frac{\partial F}{\n}(x_0,t)]\Big]^{-1};$\\
3: {\bf for k = 1,...,N do}\\
4: Using the L1 time-stepping \cite{JinLazarovZhou:L1}
to compute $u(x,t;\overline{a}_{k-1})$,\\
which is the weak solution of FDE \eqref{fde} 
with coefficient function $\overline{a}_{k-1}$;\\
5: Update the coefficient $\overline{a}_{k-1}$ by 
$ \overline{a}_{k}=K\overline{a}_{k-1};$\\
6: Check stopping criterion $\|\overline{a}_{k}-
\overline{a}_{k-1}\|_{L^2[0,T]}\le \epsilon_0$
for some $\epsilon_0>0$;\\
7: {\bf end for}\\
8: {\bf output} the approximate coefficient function $\overline{a}_{N}$.\\
\hline
\end{tabular*}
\end{table}

\section{Numerical Results for inverse problem}

\subsection{L1 time-stepping of $\D$}
\par The fourth step of Algorithm \ref{algorithm} includes   
solving the direct problem of FDE \eqref{fde} numerically. 
To this end, we choose L1 time 
stepping \cite{JinLazarovZhou:L1,LinXu:2007} to discretize the term 
$\D u(x,t):$
 \begin{equation*}
   \begin{aligned}
     \D u(x,t_N) &= \frac{1}{\Gamma(1-\alpha)}\sum^{N-1}_{j=0}
     \int^{t_{j+1}}_{t_j} \frac{\partial u(x,s)}{\partial s} 
     (t_N-s)^{-\alpha}\, ds \\
     &\approx \frac{1}{\Gamma(1-\alpha)}\sum^{N-1}_{j=0} 
     \frac{u(x,t_{j+1})-u(x,t_j)}{\tau}\int_{t_j}^{t_{j+1}}
     (t_N-s)^{-\alpha}ds\\
     &=\sum_{j=0}^{N-1}b_j\frac{u(x,t_{N-j})-u(x,t_{N-j-1})}
     {\tau^\alpha}\\
     &=\tau^{-\alpha} [b_0u(x,t_N)-b_{N-1}u(x,t_0)
     +\sum_{j=1}^{N-1}(b_j-b_{j-1})u(x,t_{N-j})] ,
   \end{aligned}
 \end{equation*}
where 
\begin{equation*}
b_j=((j+1)^{1-\alpha}-j^{1-\alpha})/\Gamma(2-\alpha),\ j=0,1,\ldots,N-1.
\end{equation*}

\subsection{Numerical results for noise free data}
\par In this part, we set $\Omega=(0,1),\ x_0=0,\ T=1,\ \L u=u_{xx},$ 
pick $u_0(x)=-\sin{\pi x}, \ F(x,t)=-(t+1)\sin{\pi x}$ 
and consider the following two coefficients:
\begin{itemize}
 \item [(a1)] smooth coefficient: $a(t)=\sin{5\pi t}+1.3;$
 \item [(a2)] nonsmooth coefficient (``smile'' function): 
 \begin{equation*}
 \begin{split}
 a(t)&=[0.8\sin{3\pi t}+1.5]\chi_{[0,1/3]}
 +[-0.5\sin{(3\pi t-\pi)}+0.6]\chi_{(1/3,2/3)}\\
 &\quad+[0.8\sin{(3\pi t-2\pi)}+1.5]\chi_{[2/3,1]}.
 \end{split} 
 \end{equation*}
\end{itemize}

\par In experiment (a1), the exact coefficient we pick is a smooth 
function. Figure \ref{smooth_monotone} shows the initial guess and the 
first three iterations, while Figure \ref{smooth_unique_9} presents 
the exact and approximate coefficients. From these two figures, we  
observe that $\{\overline{a}_n:\NN\}$ converges to $a(t)$ monotonically, 
which illustrates Theorems \ref{monotonicity} and \ref{main_inverse}. 
Moreover, the $L^2$ error of the approximation in Figure 
\ref{smooth_unique_9} is 
$\|a-\overline{a}_N\|_{L^2[0,T]}=1.04\times 10^{-6},$
which implies us the $L^2$ error of this approximation may be bounded 
by the stopping criterion number $\epsilon_0.$ This guess is confirmed 
by Figure \ref{error_epsilon} and can be expressed as 
$$\|a-\overline{a}_N\|_{L^2[0,T]}=O(\epsilon_0).$$ 
Several attempts of experiment (a1) for different $\alpha\in(0,1)$ 
are taken to find the dependence of the convergence rate of Algorithm 
\ref{algorithm} on the fractional order $\alpha,$ which is shown in Figure 
\ref{alpha_N}. This figure shows the amounts of iterations 
required, i.e. $N,$ corresponding to different $\alpha,$ which imply 
that restricted $\alpha\in(0,1),$ the larger $\alpha$ is, the faster the 
convergence rate of Algorithm \ref{algorithm} is. This phenomenon 
is explained in \cite{jin2012inverse} by a property of the Mittag-Leffler 
function; for $\alpha\in(0,1),$ the larger $\alpha$ is, 
the faster the decay rate of $E_{\alpha,1}(-z)$ is as $z\to \infty.$

\begin{figure}[th!]
\center
\subfigure[$\alpha=0.3$]{
\includegraphics[trim = .5cm .15cm .5cm .3cm, clip=true,height=4.5cm,width=6cm]
{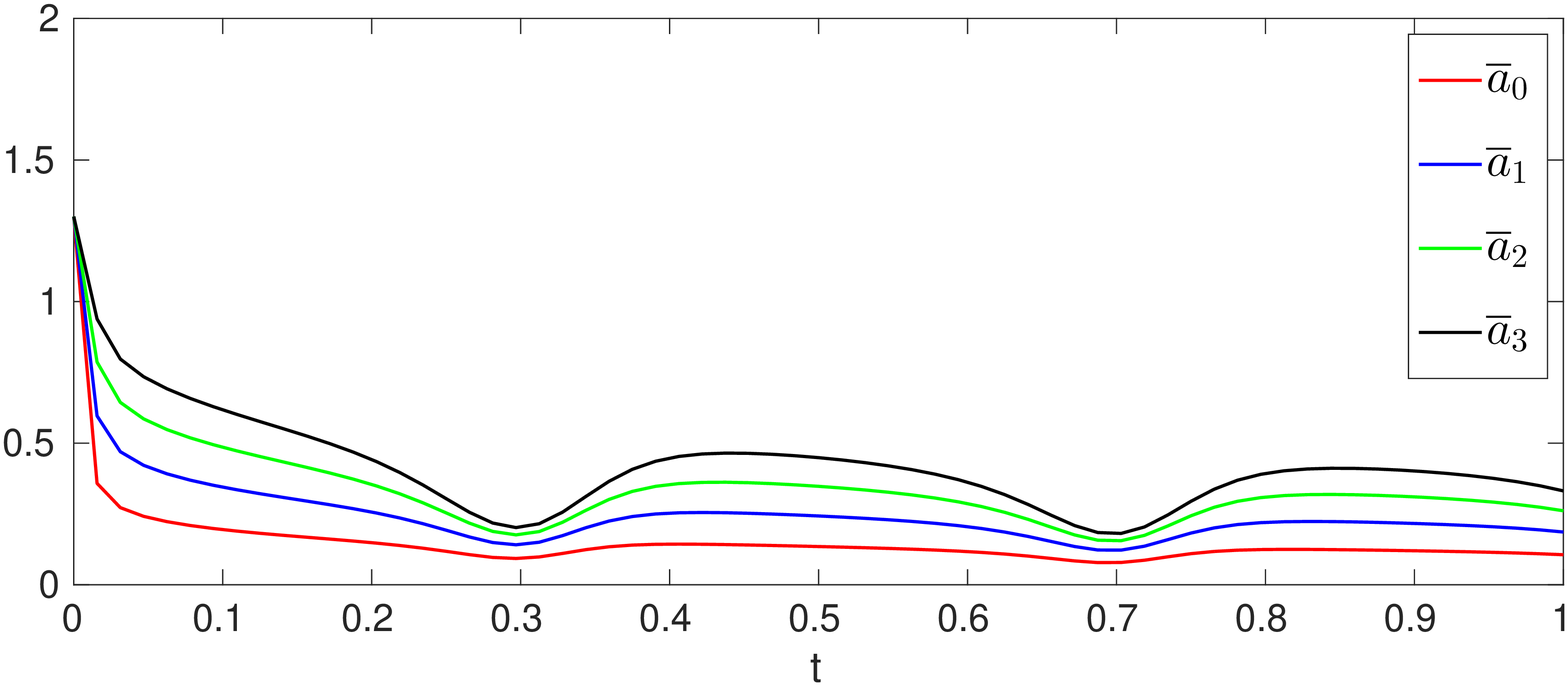}
}
\subfigure[$\alpha=0.5$]{
\includegraphics[trim = .5cm .15cm .5cm .3cm, clip=true,height=4.5cm,width=6cm]
{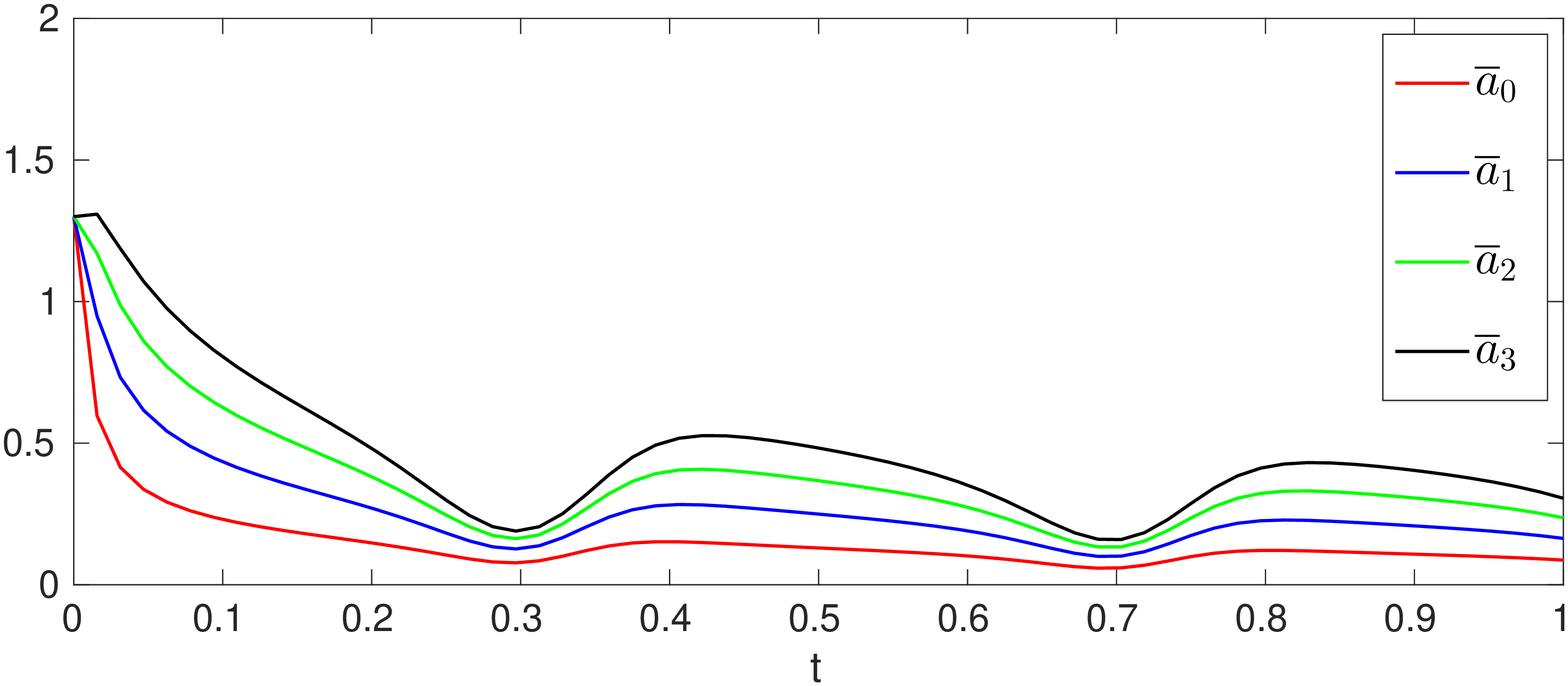}
}\\
\subfigure[$\alpha=0.7$]{
\includegraphics[trim = .5cm .15cm .5cm .3cm, clip=true,height=4.5cm,width=6cm]
{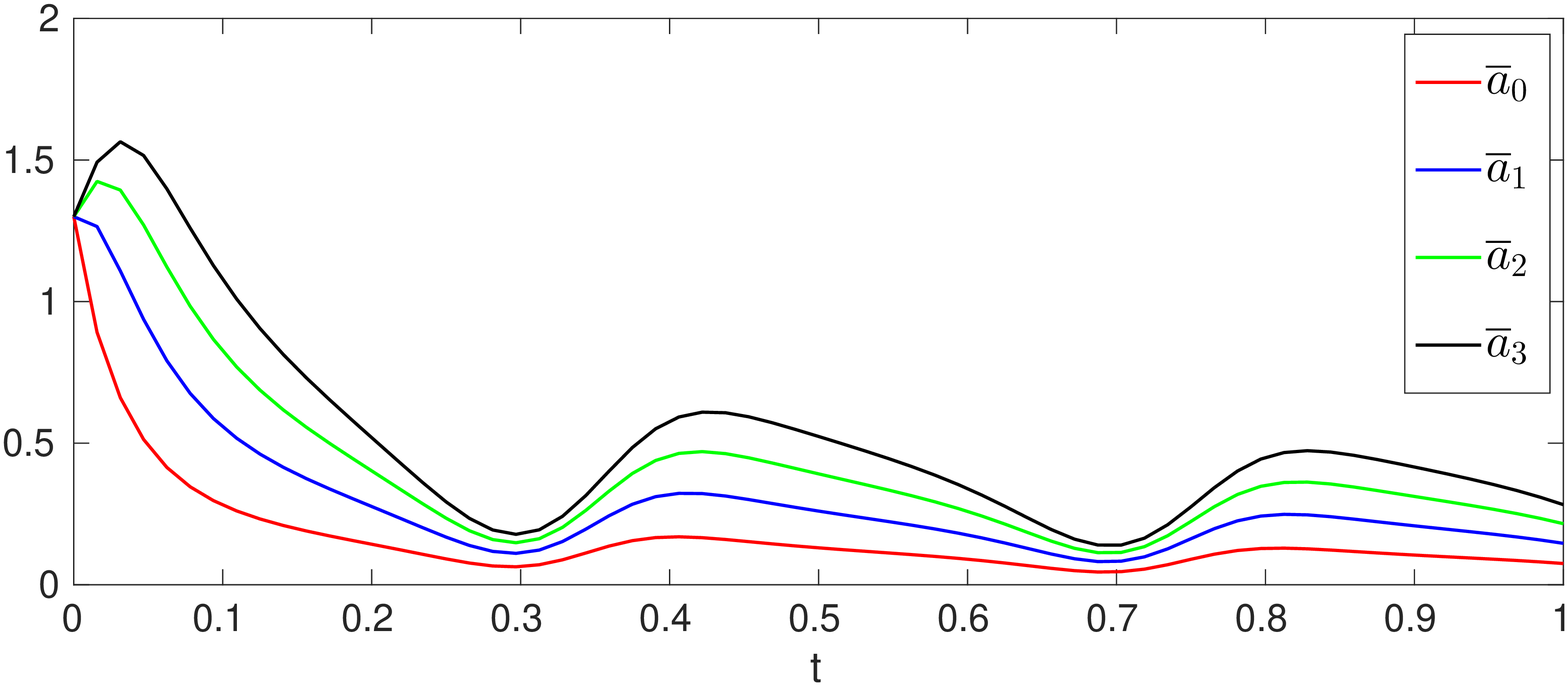}
}
\subfigure[$\alpha=0.9$]{
\includegraphics[trim = .5cm .15cm .5cm .3cm, clip=true,height=4.5cm,width=6cm]
{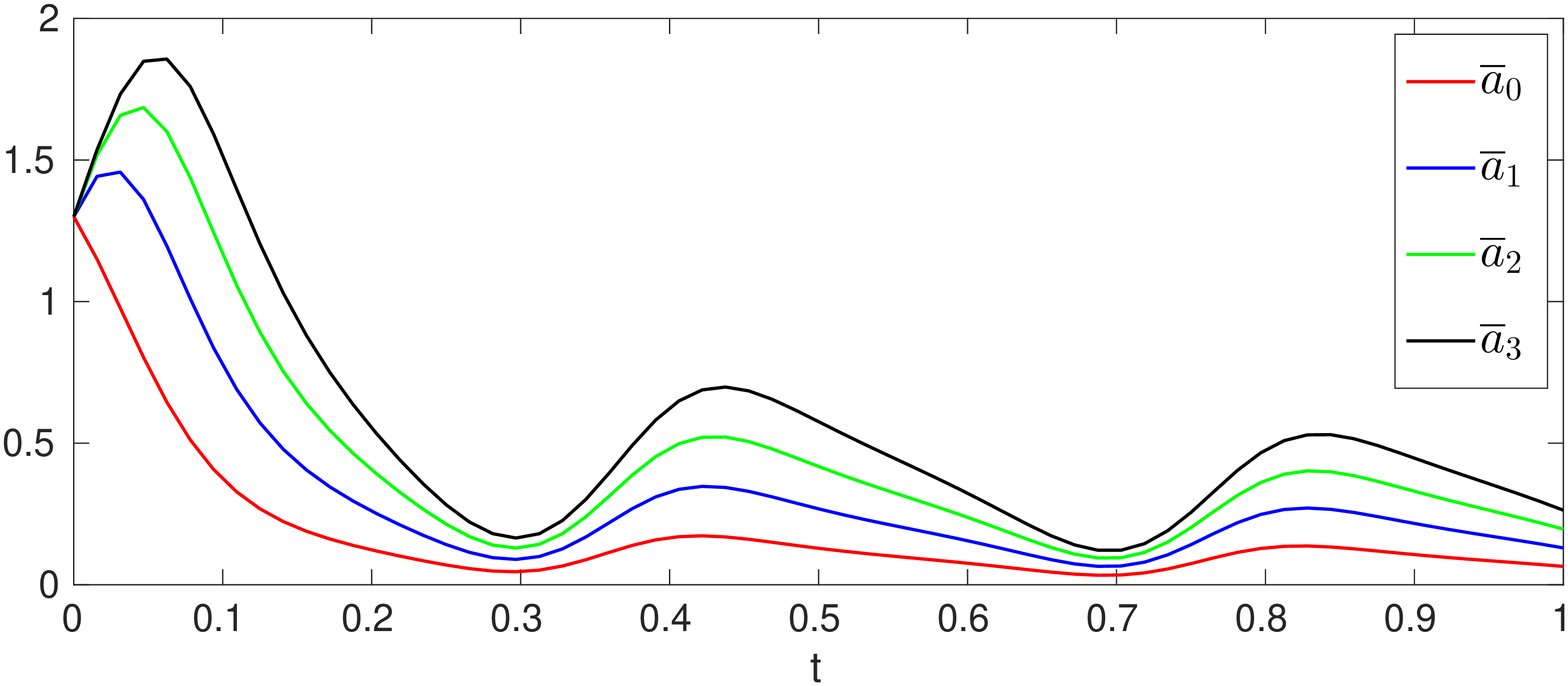}
}
\caption{Experiment (a1): the initial guess and first three iterations }
\label{smooth_monotone}
\end{figure}

\begin{figure}[h!]
\center
  \begin{tabular}{cc}
  \includegraphics[trim = .5cm .15cm .5cm .3cm, clip=true,height=5cm,width=8cm]{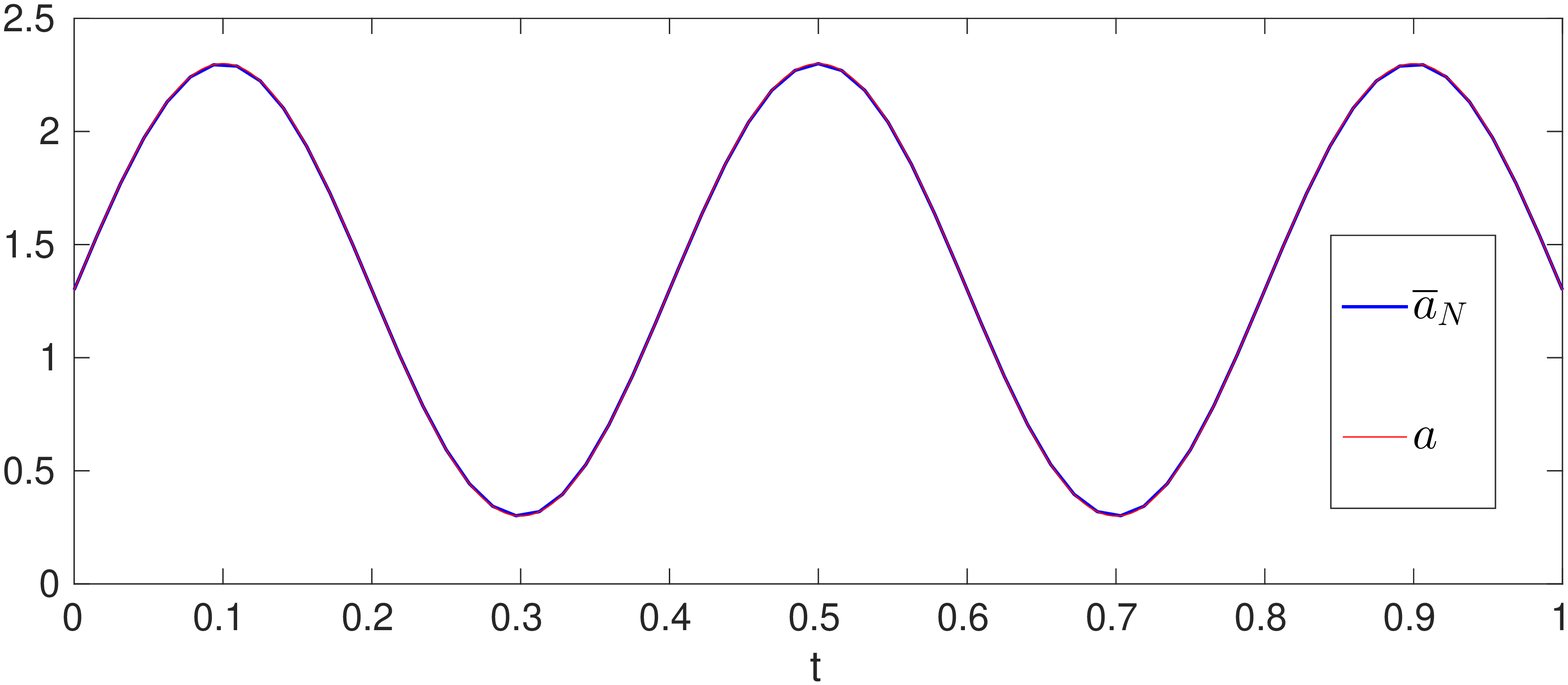}
  \end{tabular}
  \vspace{-.2cm}
  \caption{Experiment (a1): the exact and approximate coefficients for $\alpha=0.9$ and 
  $\epsilon_0=10^{-6}$}
  \label{smooth_unique_9}
\end{figure}

\begin{figure}[h!]
\center
  \begin{tabular}{cc}
  \includegraphics[trim = .5cm .15cm .5cm .3cm, clip=true,height=5cm,width=8cm]{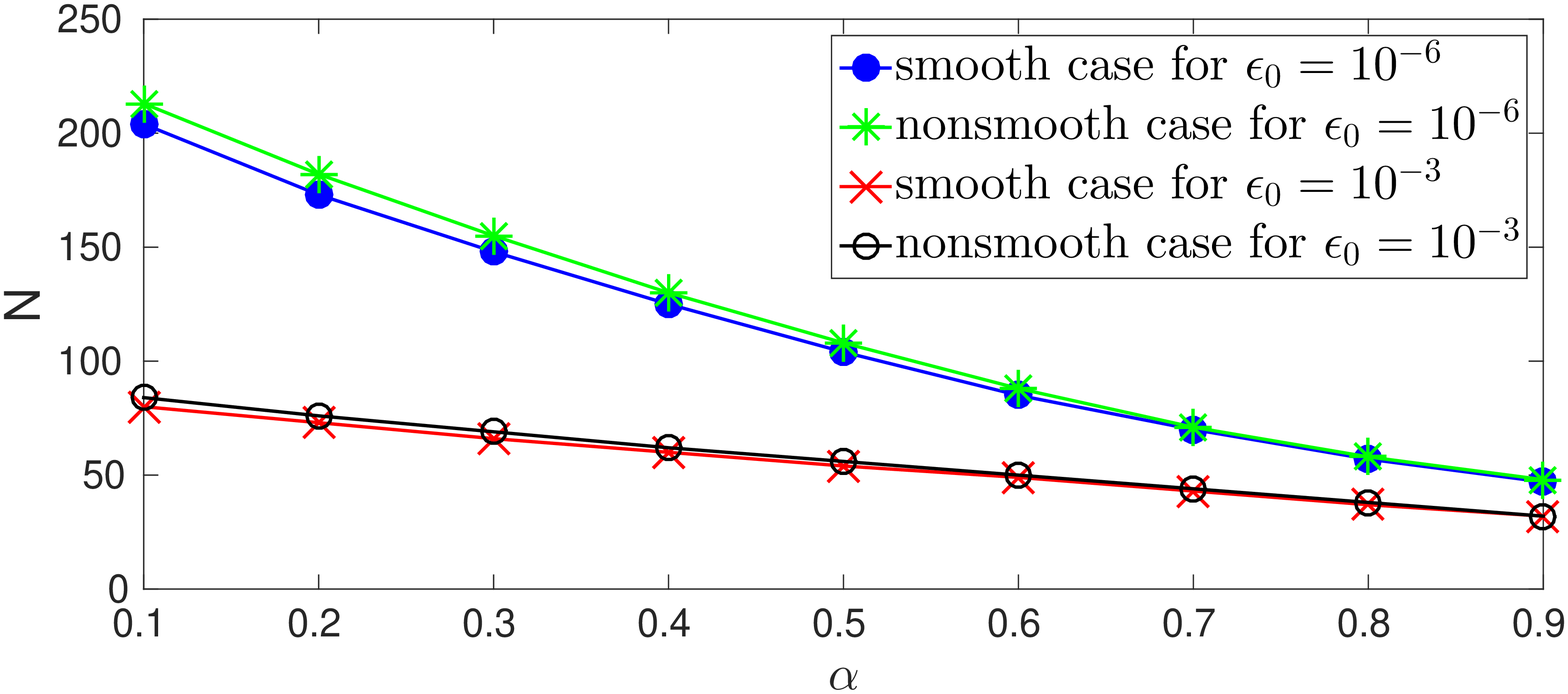}
  \end{tabular}
  \vspace{-.2cm}
  \caption{the amounts of iterations $N$ for different $\alpha$}
  \label{alpha_N}
\end{figure}

\begin{figure}[h!]
\center
  \begin{tabular}{cc}
  \includegraphics[trim = .5cm .15cm .5cm .3cm, clip=true,height=5cm,width=8cm]{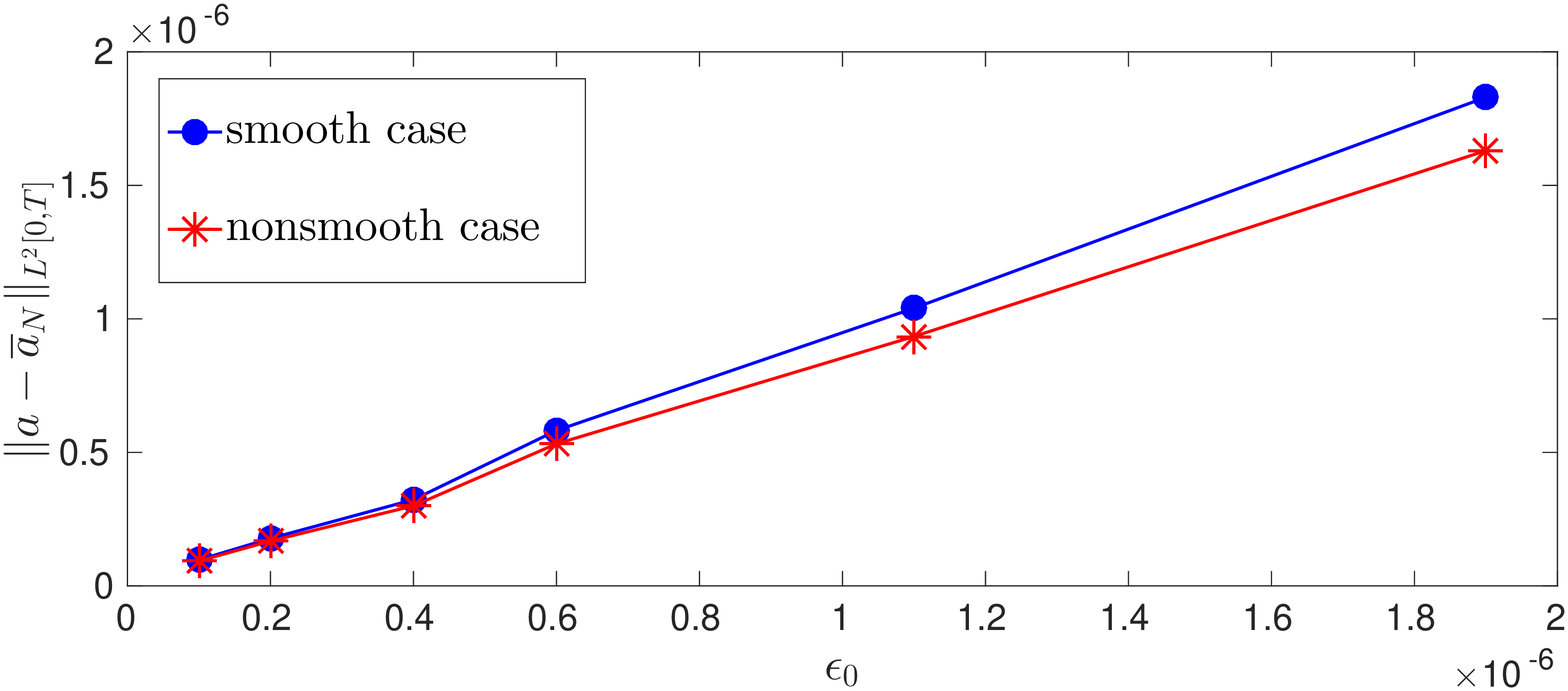}
  \end{tabular}
  \vspace{-.2cm}
  \caption{$\|a-\overline{a}_N\|_{L^2[0,T]}$ for different $\epsilon_0$ under $\alpha=0.9$}
  \label{error_epsilon}
\end{figure}

\par The definition of $\DK$ restricts the coefficient $a(t)$ in the space 
$C^+[0,T],$ however, the results of experiment (a2) indicate that Algorithm 
\ref{algorithm} still works for nonsmooth $a(t),$ which means the numerical 
restriction on $a(t)$ can possibly be extended from $a(t)\in C^{+}[0,T]$ to 
$a(t)\in L^\infty [0,T].$  
For discontinuous $a(t),$ Figures \ref{nonsmooth_monotone} 
and \ref{nonsmooth_unique_9} explain that 
Theorems \ref{monotonicity} and \ref{main_inverse} still hold, 
while Figures \ref{alpha_N} and \ref{error_epsilon}   
illustrate the similar conclusions as  
the larger $\alpha$ is, the faster the convergence rate of Algorithm 
\ref{algorithm} is, and 
$$\|a-\overline{a}_N\|_{L^2[0,T]}=O(\epsilon_0).$$

\begin{figure}[th!]
\center
\subfigure[$\alpha=0.3$]{
\includegraphics[trim = .5cm .15cm .5cm .3cm, clip=true,height=4.5cm,width=6cm]
{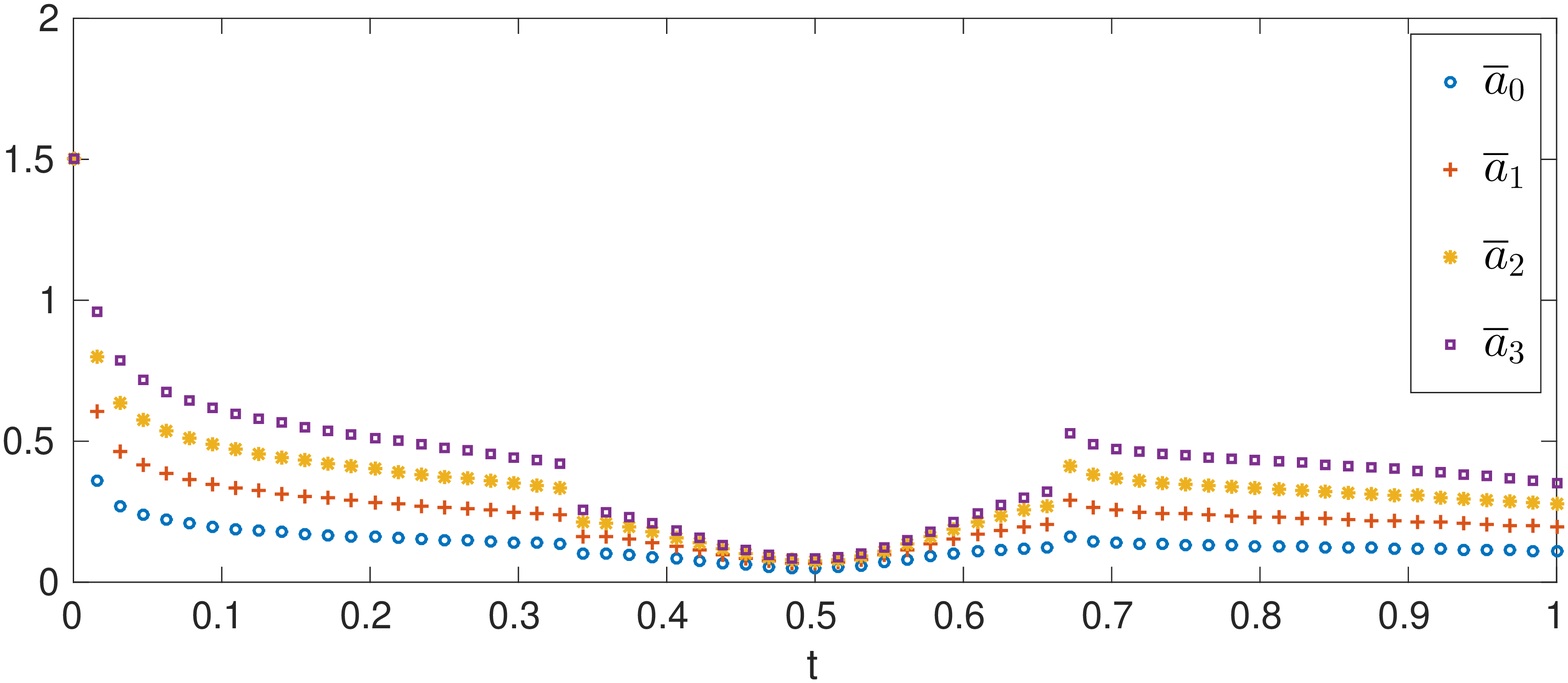}
}
\subfigure[$\alpha=0.5$]{
\includegraphics[trim = .5cm .15cm .5cm .3cm, clip=true,height=4.5cm,width=6cm]
{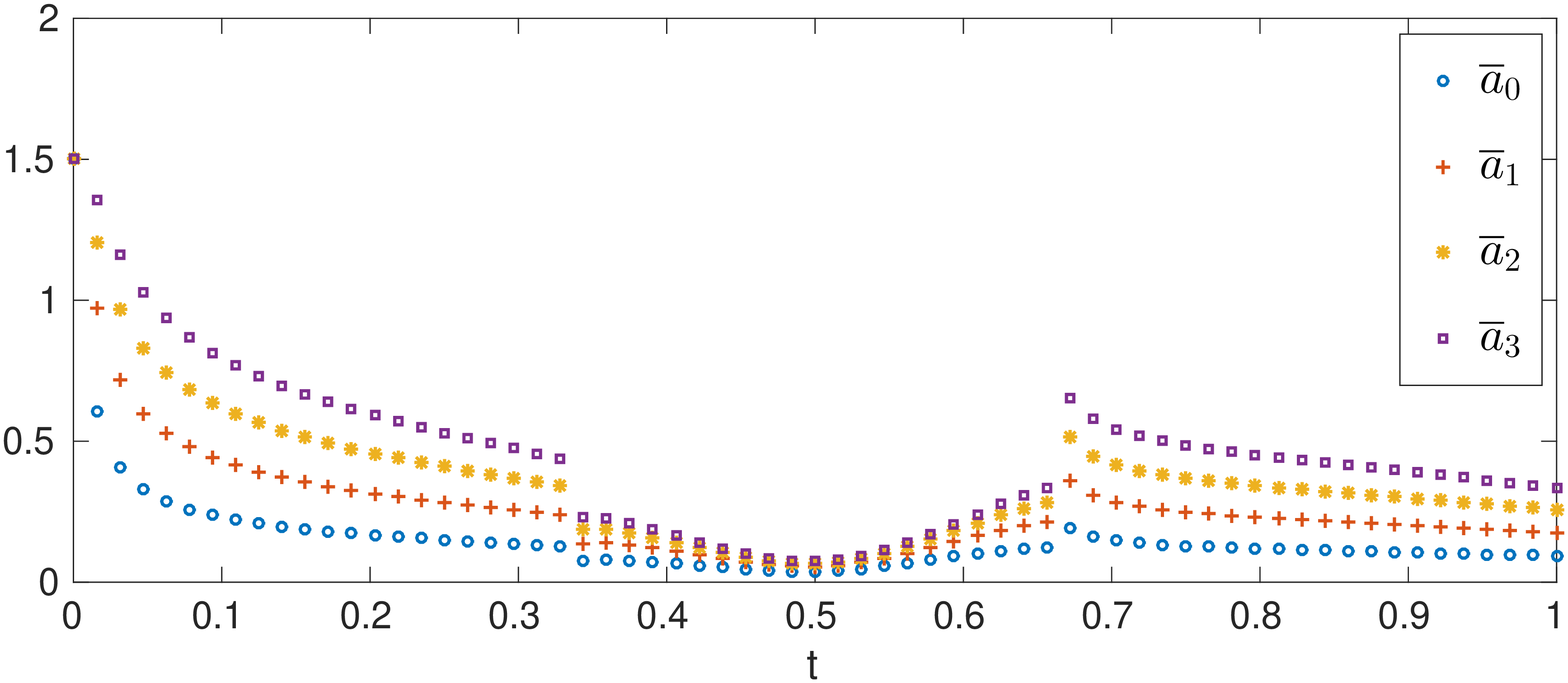}
}\\
\subfigure[$\alpha=0.7$]{
\includegraphics[trim = .5cm .15cm .5cm .3cm, clip=true,height=4.5cm,width=6cm]
{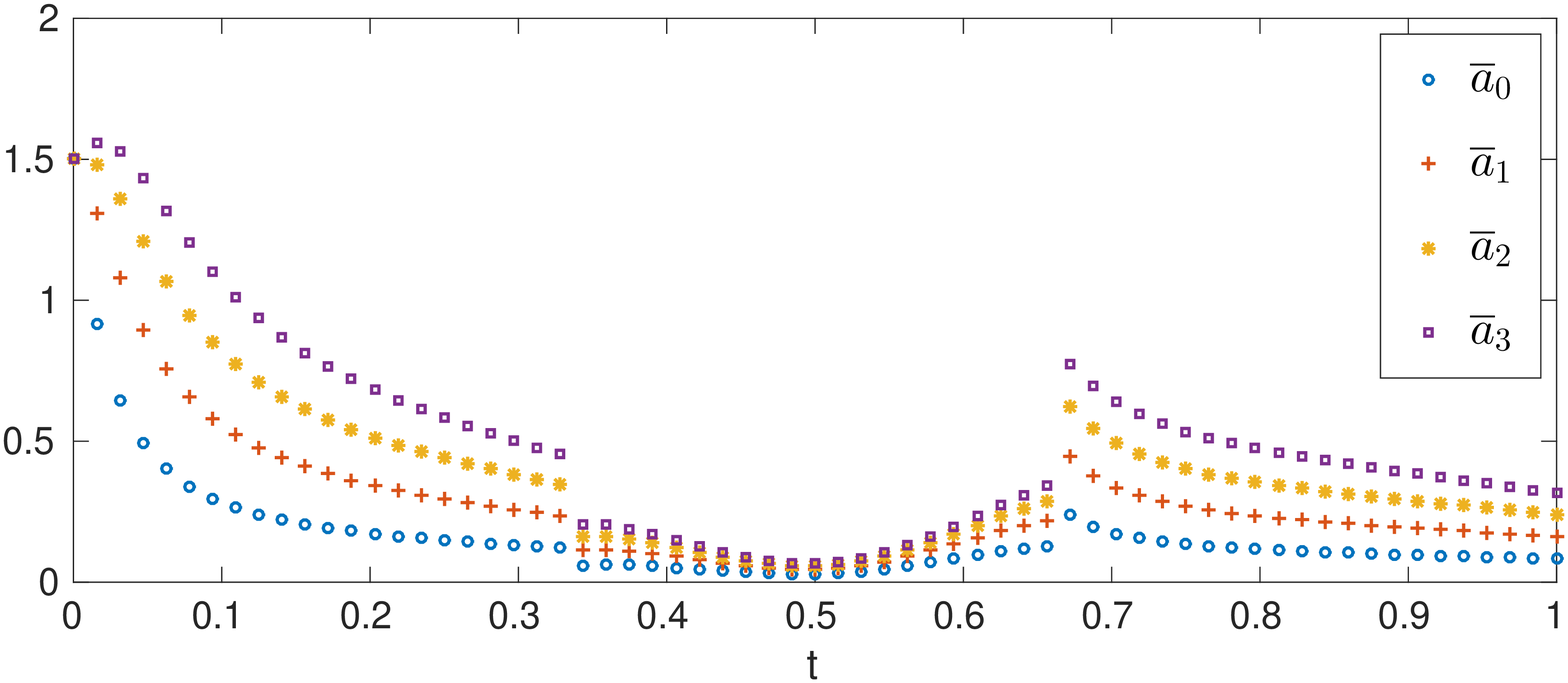}
}
\subfigure[$\alpha=0.9$]{
\includegraphics[trim = .5cm .15cm .5cm .3cm, clip=true,height=4.5cm,width=6cm]
{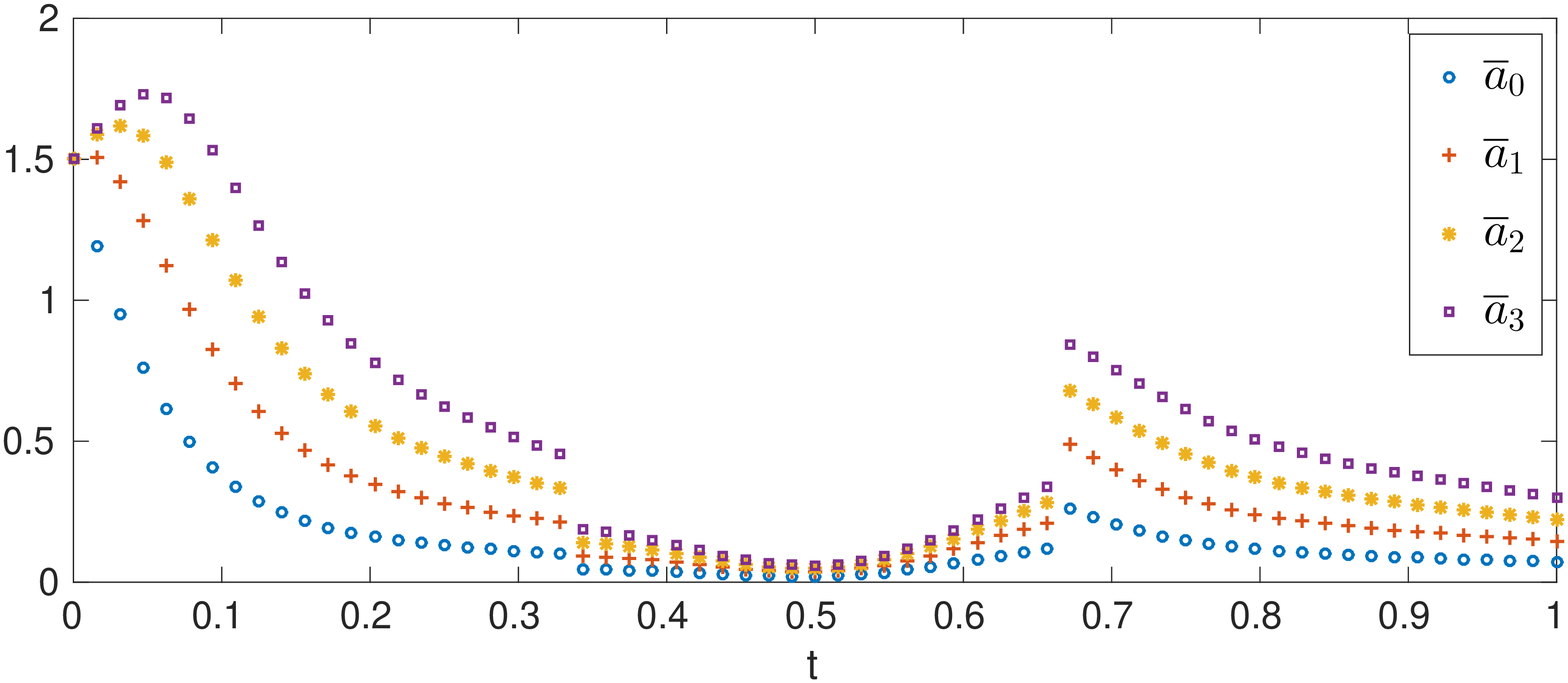}
}
\caption{Experiment (a2): the initial guess and first three iterations }
\label{nonsmooth_monotone}
\end{figure}

\begin{figure}[h!]
\center
  \begin{tabular}{cc}
  \includegraphics[trim = .5cm .15cm .5cm .3cm, clip=true,height=5cm,width=8cm]{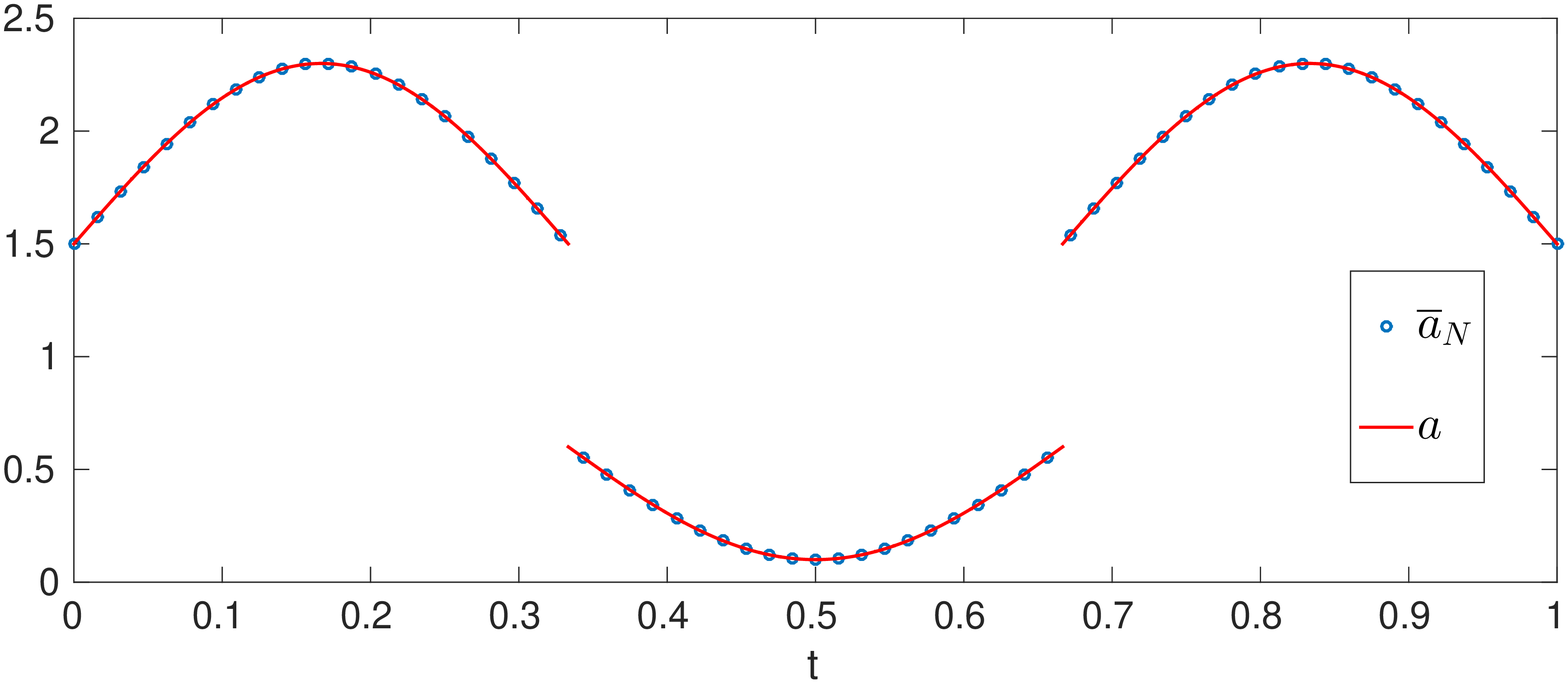}
  \end{tabular}
  \vspace{-.2cm}
  \caption{Experiment (a2): the exact and approximate coefficients for $\alpha=0.9$ and 
  $\epsilon_0=10^{-6}$}
  \label{nonsmooth_unique_9}
\end{figure}

\subsection{Numerical results for noisy data}
\par In this subsection, we will consider data polluted by noise.   
Set $g$ be the exact data and denote the noisy data by $g_\delta$ 
with relative noise level $\delta,$ i.e. 
$\|(g-g_\delta)/g\|_{L^\infty[0,T]}\le \delta.$ 
Then the perturbed operator $K_\delta$ is
$$K_\delta \psi(t)
=\frac{g_\delta(t)}{\sum\limits_{n=1}^\infty u_n(t;\psi)
\frac{\partial \phi_n}{\n}(x_0)}$$ 
with domain
$$\DKd:=\{\psi\in C^{+}[0,T]:g_\delta(t)\Big[\frac{\partial u_0}{\n}(x_0)
+ \I[\frac{\partial F}{\n}(x_0,t)]\Big]^{-1}\le \psi(t),\ t\in[0,T] \}.$$
Also, the sequence $\{\overline{a}_{\delta,n}:n\in \mathbb{N}\}$ can be 
obtained from the iteration 
$$\overline{a}_{\delta,0}=g_\delta\Big[\frac{\partial u_0}{\n}(x_0)
+ \I[\frac{\partial F}{\n}(x_0,t)]\Big]^{-1},\  
\overline{a}_{\delta,n+1}=K_\delta\overline{a}_{\delta,n},\ n\in \mathbb{N}.$$
Since $\delta$ is a small positive number and $g$ is a strictly 
positive function, we can assume $g_\delta$ is still positive, which 
means Theorem \ref{main_inverse} still holds for $K_\delta.$ 
Hence, if there exists a fixed point $a_\delta\in \DKd$, 
the sequence $\{\overline{a}_{\delta,n}:n\in \mathbb{N}\}$ will converge to 
$a_\delta$ monotonically and we denote the limit by $\overline{a}_\delta.$
Algorithm \ref{algorithm} is still able to be used to recover 
$\overline{a}_\delta$ after a slightly modification$-$replacing 
$g$ and $K$ by $g_\delta$ and $K_\delta,$ respectively.

\par We take the experiments (a1) and (a2) with  
noise level $\delta>0.$ Figures \ref{smooth_unique_9_noise_3} and 
\ref{nonsmooth_unique_9_noise_3} present 
the exact and approximate coefficients under $\delta=3\%$ for 
experiments (a1) and (a2) respectively. From figures  
\ref{smooth_unique_9_noise_3} and \ref{nonsmooth_unique_9_noise_3}, 
we observe that the smaller $|a(t)|$ is, the better the approximation is. 
This can be explained by $\delta$ means the relatively noise level, i.e. 
we pick $g_\delta=(1+\zeta\delta)g$ in the codes, where 
$\zeta$ follows a uniform distribution on $[-1,1].$
Figure \ref{error_delta} illustrates that  
$$\|a-\overline{a}_{\delta,N}\|_{L^2[0,T]}/\|a\|_{L^2[0,T]}=O(\delta),$$ 
showing the domination of the noise level $\delta$ in 
relatively $L^2$ error with the reason that $\epsilon_0 \ll \delta.$

\begin{figure}[h!]
\center
  \begin{tabular}{cc}
  \includegraphics[trim = .5cm .15cm .5cm .3cm, clip=true,height=5cm,width=8cm]{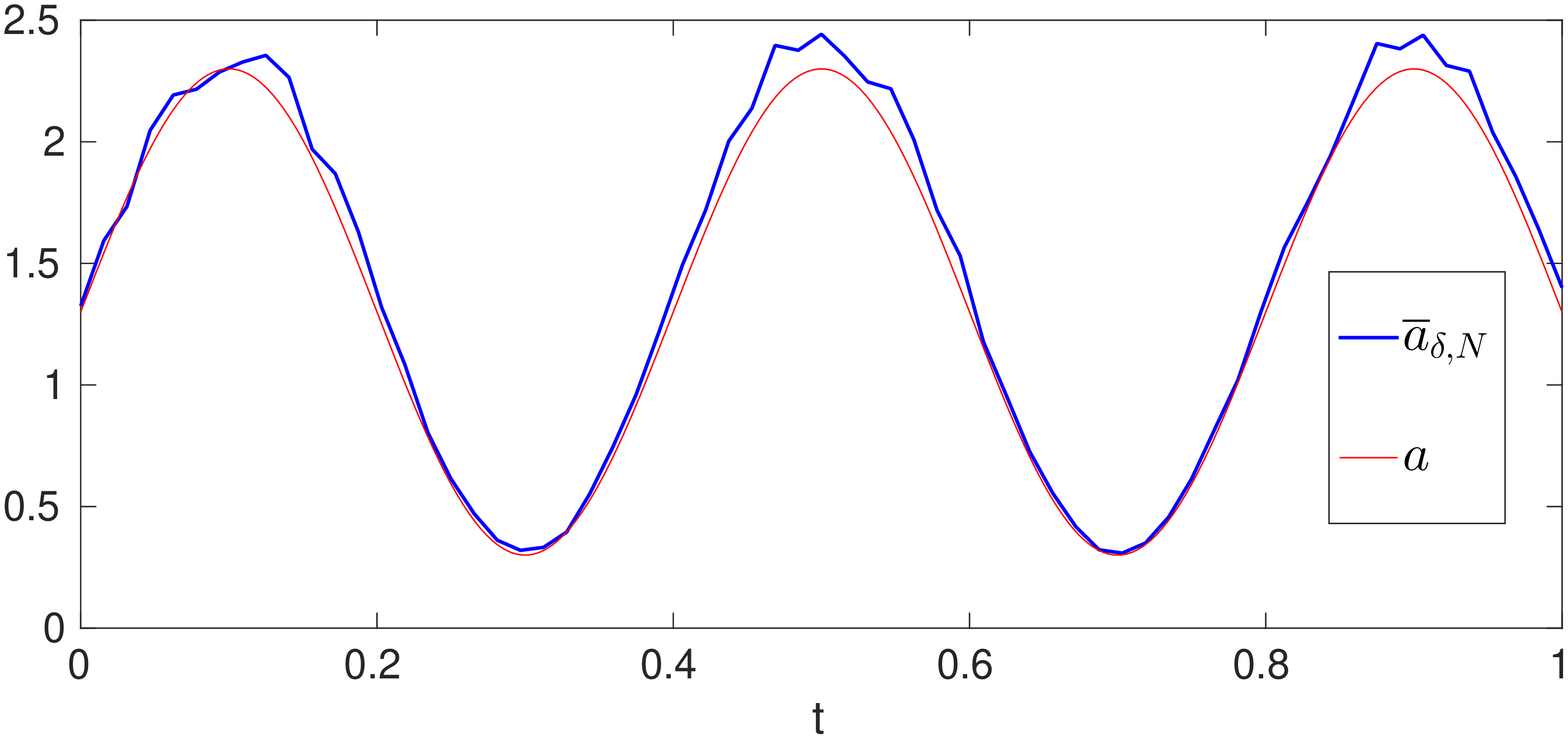}
  \end{tabular}
  \vspace{-.2cm}
  \caption{Experiment (a1): the exact and approximate coefficients with $\alpha=0.9,$ 
  $\epsilon_0=10^{-6}$ and $\delta=3\%$}
  \label{smooth_unique_9_noise_3}
\end{figure}

\begin{figure}[h!]
\center
  \begin{tabular}{cc}
  \includegraphics[trim = .5cm .15cm .5cm .3cm, clip=true,height=5cm,width=8cm]{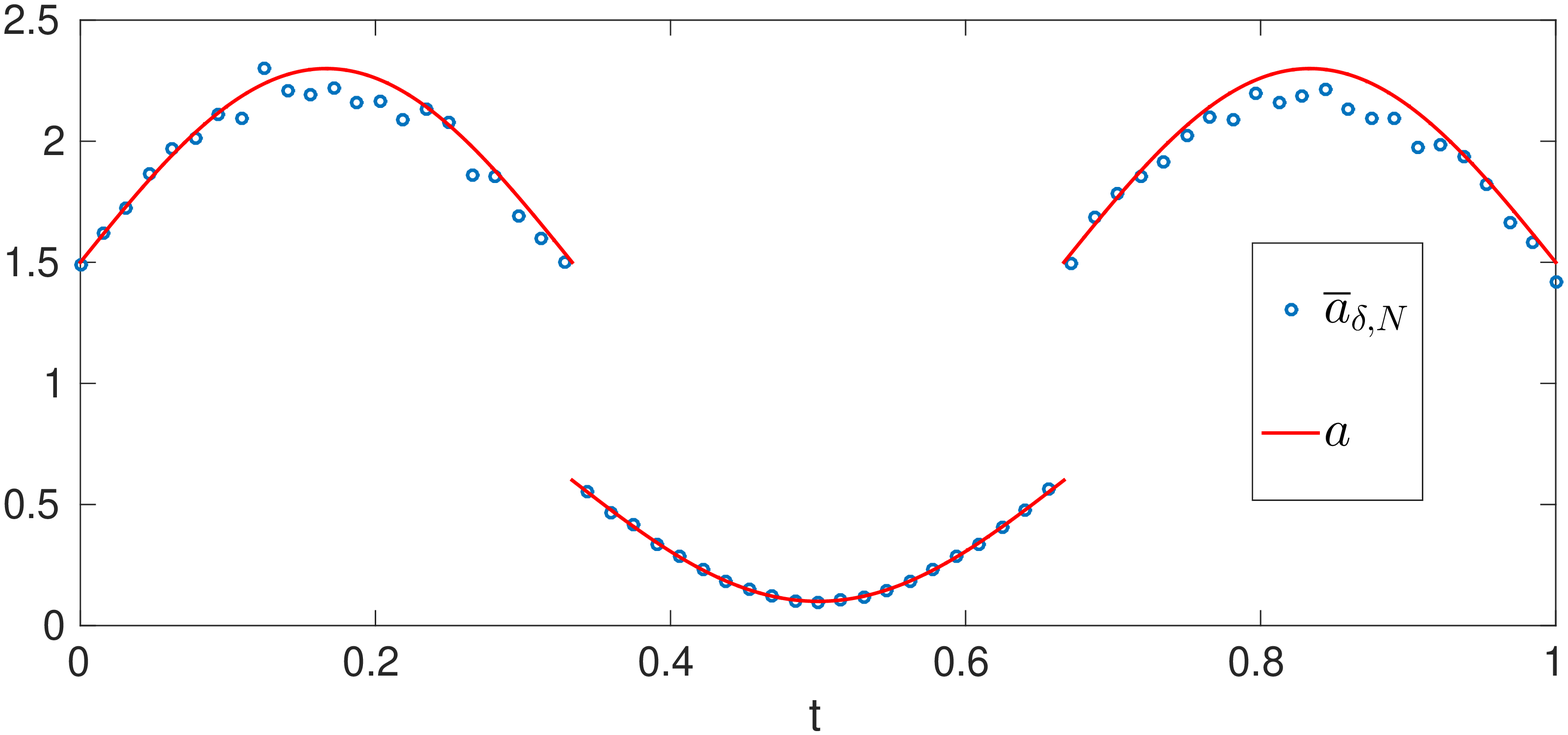}
  \end{tabular}
  \vspace{-.2cm}
  \caption{Experiment (a2): the exact and approximate coefficients with $\alpha=0.9,$ 
  $\epsilon_0=10^{-6}$ and $\delta=3\%$}
  \label{nonsmooth_unique_9_noise_3}
\end{figure}

\begin{figure}[h!]
\center
  \begin{tabular}{cc}
  \includegraphics[trim = .5cm .15cm .5cm .3cm, clip=true,height=5cm,width=8cm]{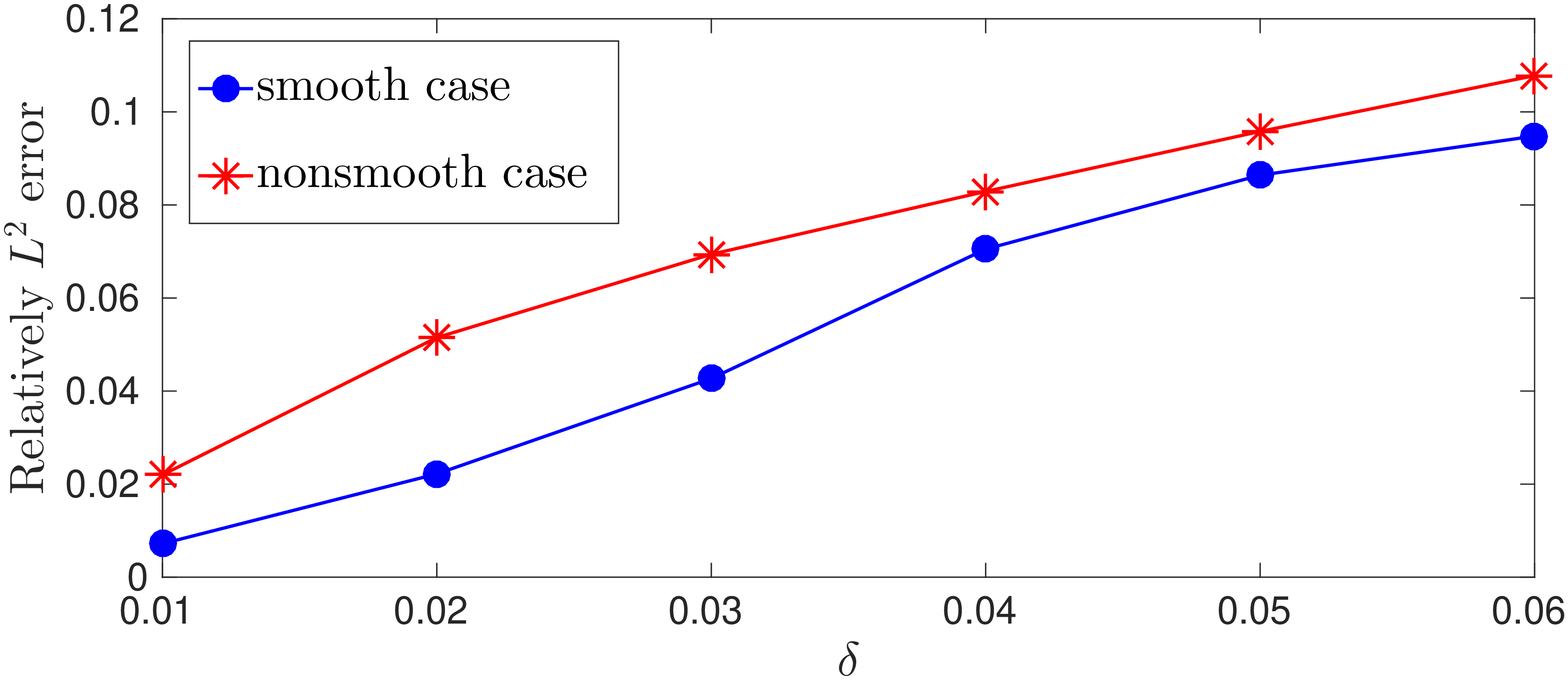}
  \end{tabular}
  \vspace{-.2cm}
  \caption{$\|a-\overline{a}_{\delta,N}\|_{L^2[0,T]}/\|a\|_{L^2[0,T]}$ 
  for different $\delta$ under $\alpha=0.9$ and $\epsilon_0=10^{-6}$}
  \label{error_delta}
\end{figure}

\subsection{Numerical results in two dimensional case}
In this part, the numerical experiments on a two dimensional domain will be considered. We set $\alpha=0.9,\ \epsilon_0=10^{-6},\ \Omega=(0,1)^2,\ x_0=(0,1/2),\ T=1,\ \L u=\triangle u,$ choose $u_0(x,y)=-\sin{[\pi xy(1-x)(1-y)]},$ $F(x,y)=-(t+1)\cdot\sin{[\pi xy(1-x)(1-y)]},$ and consider experiments (a1) and (a2). 
Figures \ref{smooth_monotone_2d} and \ref{nonsmooth_monotone_2d} confirm the theoretical conclusions in section 4.
 
\begin{figure}[th!]
	\center
	\subfigure[Initial guess and  first three iterations]{
		\includegraphics[trim = .5cm .15cm .5cm .3cm, clip=true,height=4.5cm,width=6cm]
		{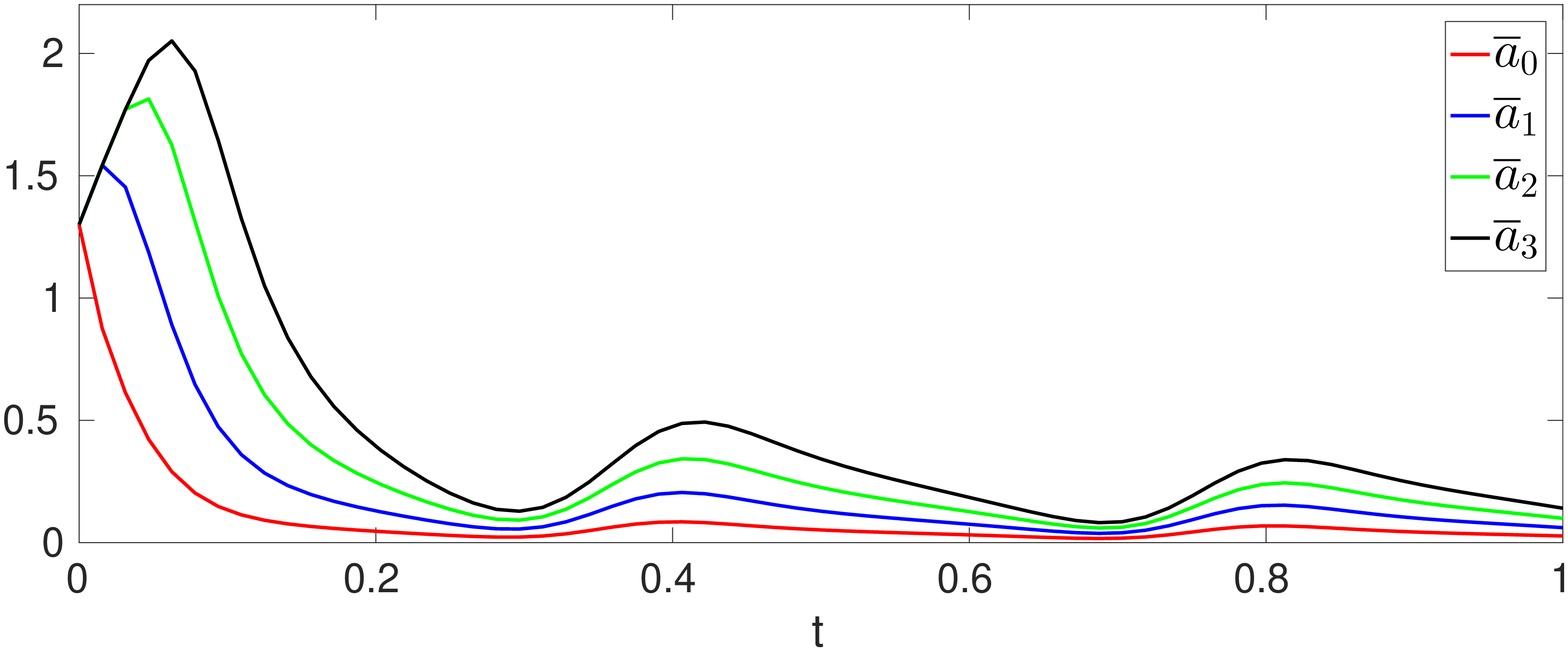}
	}
	\subfigure[Exact and approximate coefficients]{
		\includegraphics[trim = .5cm .15cm .5cm .3cm, clip=true,height=4.5cm,width=6cm]
		{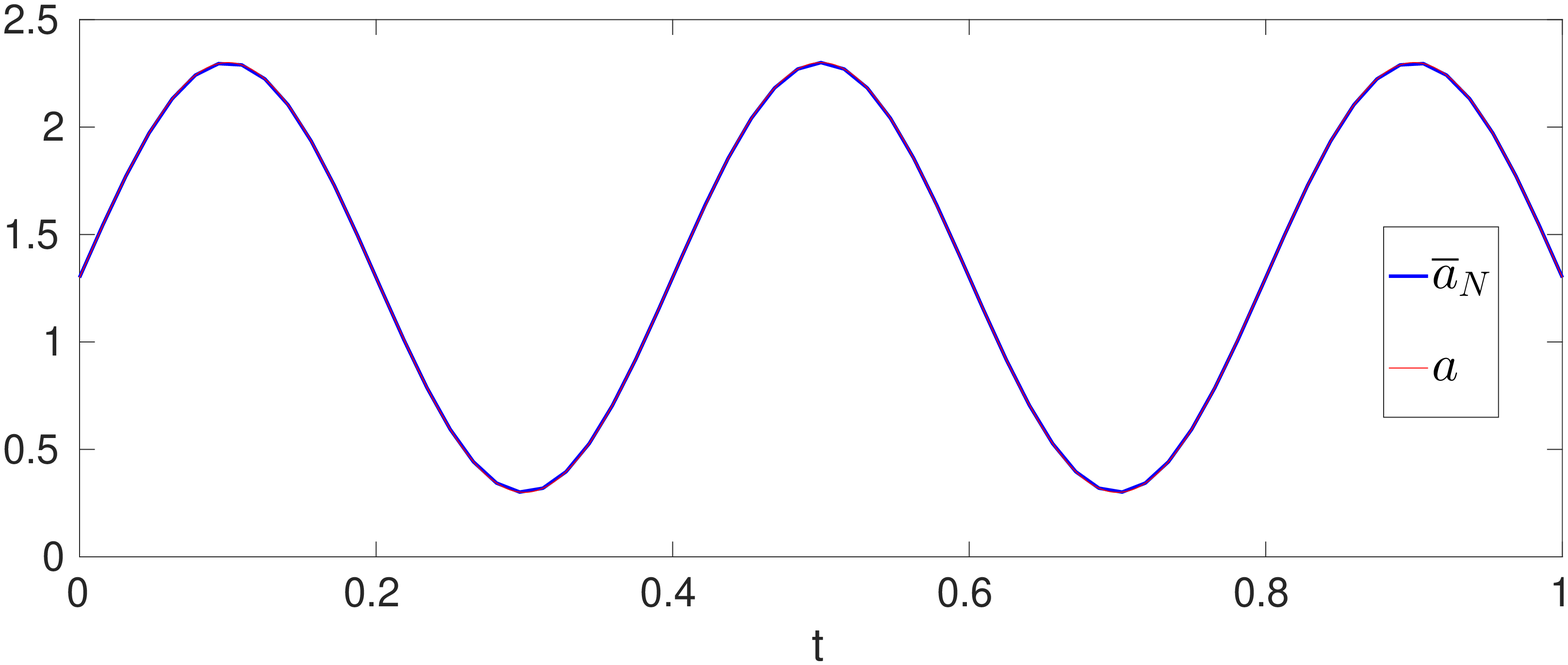}
	}
	\caption{Experiment (a1) in two dimensional case}
	\label{smooth_monotone_2d}
\end{figure}
\begin{figure}[th!]
	\center
	\subfigure[Initial guess and  first three iterations]{
		\includegraphics[trim = .5cm .15cm .5cm .3cm, clip=true,height=4.5cm,width=6cm]
		{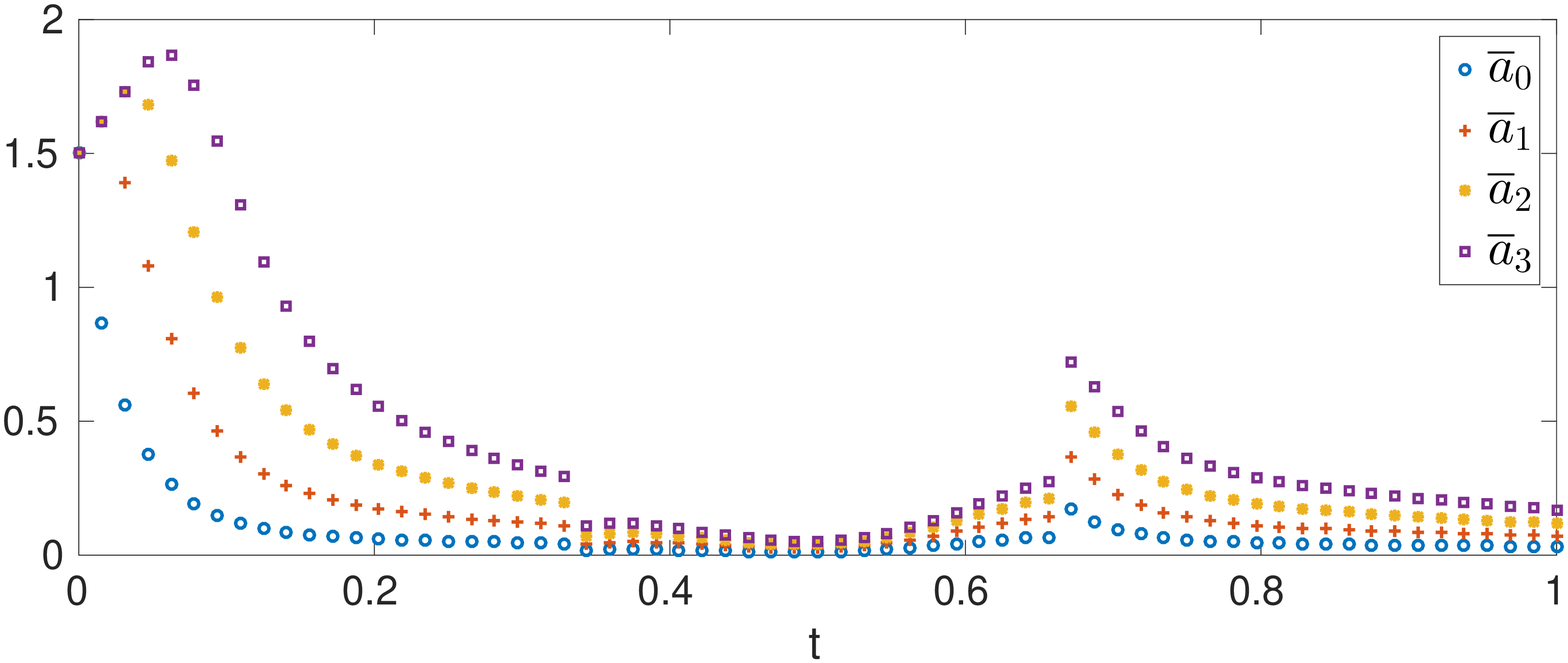}
	}
	\subfigure[Exact and approximate coefficients]{
		\includegraphics[trim = .5cm .15cm .5cm .3cm, clip=true,height=4.5cm,width=6cm]
		{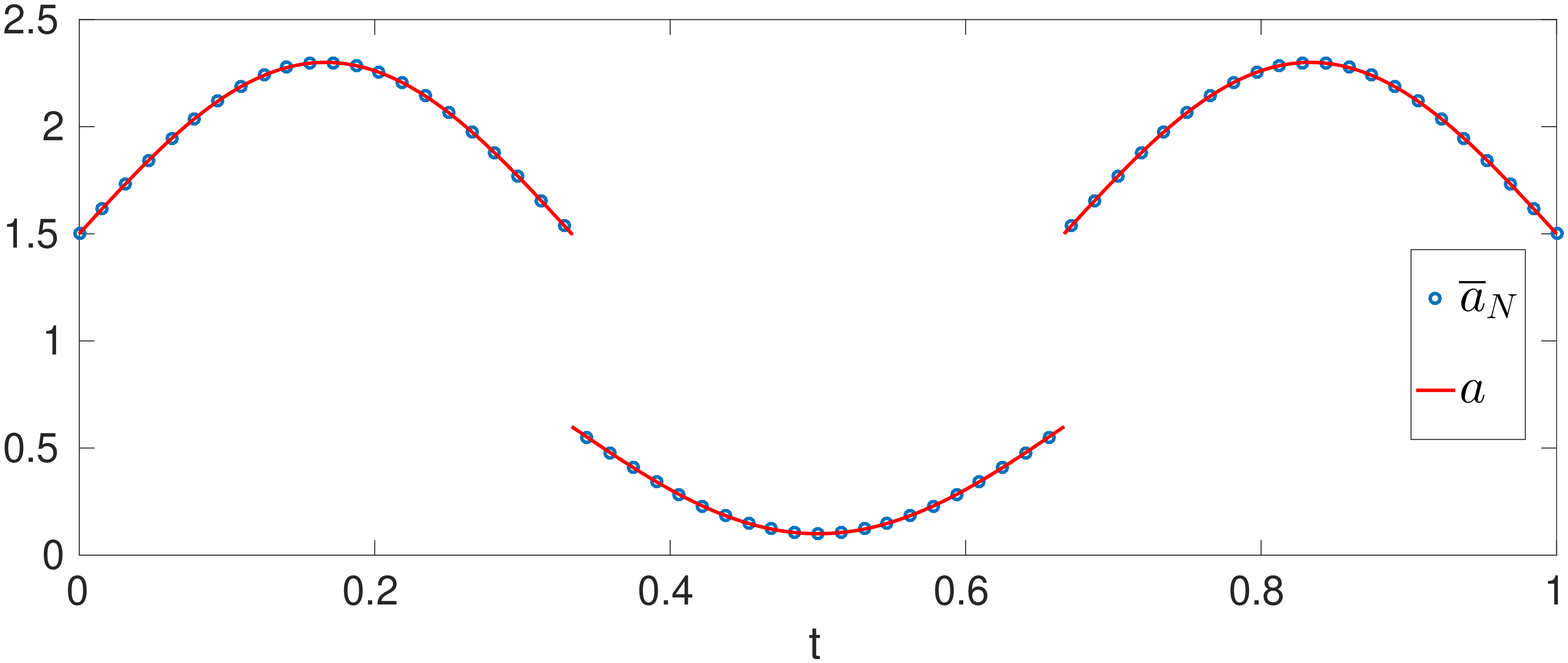}
	}
	\caption{Experiment (a2) in two dimensional case}
	\label{nonsmooth_monotone_2d}
\end{figure}

\section*{Acknowledgment}
The author is indebted to William Rundell for assistance in this work 
and acknowledges partial support from NSF-DMS 1620138. 

\bibliographystyle{abbrv}
\bibliography{frankjones_frac_multi}

\end{document}